%%
%% Copyright 2021 OXFORD UNIVERSITY PRESS
%%
%% This file is part of the 'ima-authoring-template Bundle'.
%% ---------------------------------------------
%%
%% It may be distributed under the conditions of the LaTeX Project Public
%% License, either version 1.2 of this license or (at your option) any
%% later version.  The latest version of this license is in
%%    http://www.latex-project.org/lppl.txt
%% and version 1.2 or later is part of all distributions of LaTeX
%% version 1999/12/01 or later.
%%
%% The list of all files belonging to the 'ima-authoring-template Bundle' is
%% given in the file `manifest.txt'.
%%
%% Template article for OXFORD UNIVERSITY PRESS's document class `ima-authoring-template'
%% with bibliographic references
%%

\documentclass[numbers,webpdf,imanum]{ima-authoring-template}%
\usepackage{stmaryrd}%maybe double braces
\usepackage{float}
\usepackage{microtype}
\usepackage{upgreek}
\usepackage{mathtools}
\usepackage{paper_diening}
\usepackage{mathrsfs}
\usepackage{hyperref}
\usepackage{booktabs}
\setlength{\aboverulesep}{0pt}
\setlength{\belowrulesep}{0pt}
\usepackage{colortbl}
\usepackage{tabulary}
\usepackage{diagbox}
\usepackage{caption}
\usepackage{esint}

\usepackage{calrsfs}
\DeclareMathAlphabet{\pazocal}{OMS}{zplm}{m}{n}
\DeclareMathAlphabet{\pazobfcal}{OMS}{zplm}{b}{n}

\graphicspath{{Fig/}}

% line numbers
%\usepackage[mathlines, switch]{lineno}
%\usepackage[right]{lineno}

\theoremstyle{thmstyletwo}%
\newtheorem{theorem}{Theorem}%  meant for continuous numbers
%%\newtheorem{theorem}{Theorem}[section]% meant for sectionwise numbers
%% optional argument [theorem] produces theorem numbering sequence instead of independent numbers for Proposition
\newtheorem{proposition}[theorem]{Proposition}%
\newtheorem{lemma}[theorem]{Lemma}%
\newtheorem{remark}[theorem]{Remark}%
\newtheorem{corollary}[theorem]{Corollary}%

\newtheorem{assumption}[theorem]{Assumption}

\numberwithin{equation}{section}

\providecommand{\meantmp}[2]{#1\langle{#2}#1\rangle}
\providecommand{\mean}[1]{\meantmp{}{#1}}

\providecommand{\jumptmp}[2]{#1\llbracket{#2}#1\rrbracket}
\providecommand{\jump}[1]{\jumptmp{}{#1}}

\providecommand{\avgtmp}[2]{#1\{{#2}#1\}}
\providecommand{\avg}[1]{\avgtmp{}{#1}}

\providecommand{\PiDG}{{\Uppi_{h}^{k}}}

\providecommand{\Pia}{{\Uppi_h^{0}}}

\providecommand{\Vo}{{\mathaccent23 V}}
\providecommand{\Qo}{{\mathaccent23 Q}}
\providecommand{\Xhk}{\smash{X_h^k}}
\providecommand{\Vhk}{\smash{V_h^k}}
\providecommand{\Qhk}{\smash{Q_h^k}}
\providecommand{\Qhkc}{\smash{Q_{h,c}^k}}

\providecommand{\Qhkco}{{{\mathaccent23 Q}_{h,c}^k}}

\providecommand{\SSS}{\mathbf{S}}

\newcommand{\Ghk}{\boldsymbol{\pazobfcal{G}}_h^k}

\newcommand{\Dhk}{\boldsymbol{\pazobfcal{D}}_h^k}

\newcommand{\Divhk}{\pazobfcal{D}\hspace*{-0.05em}\dot{\iota}\!\nu_h^k}

\newcommand{\setRhk}{\boldsymbol{\pazobfcal{R}}_h^k}

\newcommand{\WDG}{W^{1,p}(\pazocal{T}_h)}

\providecommand{\divo}{\mathrm{div}\,}

\providecommand{\sssl}{\avg{\abs{\Pia \Dhk \bfv_h}}}

\newcommand\blfootnote[1]{%
	\begingroup
	\renewcommand\thefootnote{}\footnote{#1}%
	\addtocounter{footnote}{-1}%
	\endgroup
}

\begin{document}

\DOI{DOI HERE}
\copyrightyear{2021}
\vol{00}
\pubyear{2021}
\access{Advance Access Publication Date: Day Month Year}
\appnotes{Paper}
\copyrightstatement{Published by Oxford University Press on behalf of the Institute of Mathematics and its Applications. All rights reserved.}
\firstpage{1}

%\subtitle{Subject Section}

\title[Note on quasi-optimal error estimates for the pressure for shear-thickening fluids]{Note on quasi-optimal error estimates for the pressure for shear-thickening fluids}

\author{Alex Kaltenbach*\ORCID{0000-0001-6478-7963}
\address{\orgdiv{Institute of Mathematics}, \orgname{Technical University of Berlin},\\ \orgaddress{\street{Straße des 17.~Juni 135}, \postcode{10623}, \state{Berlin}, \country{Germany}}}}
\corresp[*]{Alex Kaltenbach: \href{email:kaltenbach@math.tu-berlin.de}{kaltenbach@math.tu-berlin.de}}
\author{Michael R\r{u}\v{z}i\v{c}ka
\address{\orgdiv{Department of Applied Mathematics}, \orgname{University of Freiburg},\\  \orgaddress{\street{Ernst--Zermelo-Straße 1}, \postcode{79104}, \state{Freiburg}, \country{Germany}}}}

\authormark{Alex Kaltenbach and Michael R\r{u}\v{z}i\v{c}ka}

\received{Date}{0}{Year}
\revised{Date}{0}{Year}
\accepted{Date}{0}{Year}

%\editor{Associate Editor: Name}

\abstract{\hspace*{5mm}In this paper, we derive quasi-optimal a priori error estimates for the kinematic pressure~for~a~Local Discontinuous Galerkin (LDG) approximation  of steady systems of $p$-Navier--Stokes type in the case of shear-thickening, \textit{i.e.}, in the case $p>2$, imposing a new mild Muckenhoupt regularity condition.
}
\keywords{discontinuous Galerkin method; $p$-Navier--Stokes system; pressure; a priori error estimate; Muckenhoupt weights.}
 
\maketitle

\section{Introduction}\blfootnote{* funded by the Deutsche Forschungsgemeinschaft (DFG, German Research Foundation) - 525389262\vspace*{-1mm}}

\hspace*{5mm}In the present paper, we examine a \textit{Local Discontinuous Galerkin (LDG)} approximation of 
steady systems of \textit{$p$-Navier--Stokes  type}, \textit{i.e.}, 
\begin{equation}
	\label{eq:p-navier-stokes}
	\begin{aligned}
		-\divo\SSS(\bfD\bfv)+[\nabla\bfv]\bfv+\nabla q&=\bff   \qquad&&\text{in }\Omega\,,\\
		\divo\bfv&=0 \qquad&&\text{in }\Omega\,,
		\\
		\bfv &= \mathbf{0} &&\text{on } \partial\Omega\,.
	\end{aligned}
\end{equation}
for quasi-optimal a priori error estimates for the kinematic pressure  in the case of shear-thickening~fluids.
The system \eqref{eq:p-navier-stokes} describes the steady motion of a homogeneous,
incompressible fluid with shear-dependent viscosity. More precisely,
for a given vector field $\bff \colon\Omega\to \setR^d$ describing external~forces,~an incompressibility constraint  \eqref{eq:p-navier-stokes}$_2$,  and a non-slip
boundary condition \eqref{eq:p-navier-stokes}$_3$, the system \eqref{eq:p-navier-stokes} seeks for a
\textit{velocity vector field} $\bfv=(v_1,\ldots,v_d)^\top\colon \Omega\to
	\setR^d $ and a \textit{kinematic pressure} $q\colon \Omega\to \setR$ solving~\eqref{eq:p-navier-stokes}.
Here, $\Omega\subseteq \mathbb{R}^d$, $d\in \set{2,3}$, is a bounded, polygonal (if $d=2$)~or~polyhedral~(if~${d=3}$)~Lipschitz domain. The \textit{extra stress tensor} $\SSS(\bfD\bfv)\colon\Omega\to \smash{\setR^{d\times d}_{\textup{sym}}}$ depends on the \textit{strain rate tensor} $\bfD\bfv\coloneqq \frac{1}{2}(\nabla\bfv+\nabla\bfv^\top)\colon\Omega\to \smash{\setR^{d\times d}_{\textup{sym}}}$, \textit{i.e.}, the symmetric part of the velocity gradient  $\nabla\bfv=(\partial_j v_i)_{i,j=1,\ldots,d}
\colon\Omega\to \setR^{d\times d}$. The \textit{convective term} $\smash{[\nabla\bfv]\bfv\colon\Omega\to \mathbb{R}^d}$~is~defined~via $\smash{([\nabla\bfv]\bfv)_i\coloneqq \sum_{j=1}^d{v_j\partial_j v_i}}$ for all $i=1,\ldots,d$.

Throughout the paper, we  assume that the extra stress tensor~$\SSS$~has~\textit{$(p,\delta)$-structure} (\textit{cf.}~Assumption~\ref{assum:extra_stress}). The relevant example falling into this class is 
\begin{align*}
	\smash{\SSS(\bfD\bfv)=\mu_0\, (\delta+\vert \bfD\bfv\vert)^{p-2}\bfD\bfv}\,,
\end{align*}
where $p\in (1,\infty)$, $\delta\ge 0$, and $\mu_0>0$.

\hspace*{-1mm}The a priori error analysis of the steady $p$-Navier--Stokes
problem \eqref{eq:p-navier-stokes} using FE~or~DG~approximations is by now well-understood (see \cite{kr-pnse-ldg-1} for an overview): 
recently, %in \cite{JK23_inhom}, a priori error estimates for a FE approximation of the steady $p$-Navier--Stokes
%problem \eqref{eq:p-navier-stokes} were derived, which turn out to be optimal for the velocity vector field for any $p\in (\frac{3d}{d+2},+\infty)$, but sub-optimal for the kinematic pressure if $p>2$.
%Similarly,
 in \cite{kr-pnse-ldg-2,kr-pnse-ldg-3}, a priori error estimates for an LDG approximation of the steady $p$-Navier--Stokes
problem \eqref{eq:p-navier-stokes} in the case of shear-thickening, \textit{i.e.}, in the case $p>2$, were derived, which are optimal for the velocity vector field, but sub-optimal for the kinematic pressure.
This lacuna is mainly due to the following technical hurdle:
in the error analysis of the  LDG approximations of the steady $p$-Navier--Stokes problem \eqref{eq:p-navier-stokes}, it turn out that the error of the kinematic pressure measured in the $L^{p'}(\Omega)$-norm is bounded by the $L^{(\varphi_{\smash{\vert \bfD\bfv\vert}})^*}(\Omega)$-norm of the~stress~errors, \textit{i.e.},
\begin{align*}
	\smash{\| q_h-q\|_{p',\Omega}\leq \|\bfS(\Dhk\bfv_h)-\bfS(\bfD\bfv)\|_{(\varphi_{\smash{\vert \bfD\bfv\vert}})^*,\Omega}+h\,\|\bfS_{\smash{\avg{\vert \Pi_h^0\Dhk\bfv_h\vert }}}(h^{-1}\jump{\bfv_h\otimes\bfn})\|_{(\varphi_{\smash{\avg{\vert \Pi_h^0\Dhk\bfv_h\vert }}})^*,\Gamma_h}+\textup{(h.o.t.)}\,.}
\end{align*}
This relation is mainly a consequence of the discrete inf-sup stability result (\textit{cf.}\ \cite[Lem.\ 6.10]{DiPE12}) 
\begin{align*}
	\| z_h\|_{p',\Omega}\leq \sup_{\bfz_h\in  V_h^k\,:\,\|\bfz_h\|_{\nabla,p,h}\leq 1}{(z_h,\Divhk \bfz_h)_{\Omega}}\,.\\[-7mm]
\end{align*} 
Then, using the estimates
\begin{align*}
	\|\bfS(\Dhk\bfv_h)-\bfS(\bfD\bfv)\|_{(\varphi_{\smash{\vert \bfD\bfv\vert}})^*,\Omega}^2&\leq %c\,\rho_{(\phi_{\abs{\bfD \bfv}})^*,\Omega}(\bfS(\Dhk\bfv_h)-\bfS(\bfD\bfv))\\&\leq 
	c\,\|\bfF(\Dhk\bfv_h)-\bfF(\bfD\bfv)\|_{2,\Omega}^2\,,\\[-1mm]
h\,	\|\bfS_{\smash{\avg{\vert \Pi_h^0\Dhk\bfv_h\vert }}}(h^{-1}\jump{\bfv_h\otimes\bfn})\|_{(\varphi_{\smash{\avg{\vert \Pi_h^0\Dhk\bfv_h\vert }}})^*,\Gamma_h}&\leq %c\,h\,\rho_{(\varphi_{\smash{\avg{\vert \Pi_h^0\Dhk\bfv_h\vert }}})^*,\Gamma_h}(\bfS_{\smash{\avg{\vert \Pi_h^0\Dhk\bfv_h\vert }}}(h^{-1}\jump{\bfv_h\otimes\bfn}))\\ &	\leq 
c\,h\,\rho_{\varphi_{\smash{\avg{\vert \Pi_h^0\Dhk\bfv_h\vert }}},\Gamma_h}(h^{-1}\jump{\bfv_h\otimes\bfn})\,.
\end{align*}
one finds that\vspace*{-0.5mm}
\begin{align*}
	\smash{\| q_h-q\|_{p',\Omega}^2\leq c\,\big(\|\bfF(\Dhk\bfv_h)-\bfF(\bfD\bfv)\|_{2,\Omega}^2+h\,\rho_{\varphi_{\smash{\avg{\vert \Pi_h^0\Dhk\bfv_h\vert }}},\Gamma_h}(h^{-1}\jump{\bfv_h\otimes\bfn})\big)+\textup{(h.o.t.)}\,,}
\end{align*}
\textit{i.e.}, the squared kinematic pressure error measured in the $L^{p'}(\Omega)$-norm is bounded by the~squared~velocity vector field error.
However, numerical experiments (\textit{cf.}\ \cite{kr-pnse-ldg-3}) suggest the relation 
\begin{align}
	\label{intro:relation}
	\rho_{(\varphi_{\vert \bfD\bfv\vert})^*,\Omega}(q_h-q)\leq c\,\big(\|\bfF(\Dhk\bfv_h)-\bfF(\bfD\bfv)\|_{2,\Omega}^2+h\,\rho_{\varphi_{\smash{\avg{\vert \Pi_h^0\Dhk\bfv_h\vert }}},\Gamma_h}(h^{-1}\jump{\bfv_h\otimes\bfn})\big)+\textup{(h.o.t.)}\,.
\end{align}
To be in the position to establish the relation \eqref{intro:relation}, however, it is necessary to have a discrete convex conjugation inequality in terms of the shifted modular $	\rho_{\varphi_{\vert \bfD\bfv\vert},\Omega}$, \textit{i.e.},
\begin{align}\label{intro:discrete_convex_conjugation_ineq}
	\rho_{(\varphi_{\vert \bfD\bfv\vert})^*,\Omega}(z_h)\leq \sup_{\bfz_h\in  V_h^k}{\big[(z_h,\Divhk\,\bfz_h)_{\Omega}-\tfrac{1}{c}\big(\rho_{\varphi_{\vert \bfD\bfv\vert},\Omega}(\nabla\bfz_h)+\rho_{\varphi_{\vert \bfD\bfv\vert},\Gamma_h}( \jump{\bfz_h\otimes \mathbf{n}})\big)\big]}\,.\\[-7mm]\notag
\end{align}
The validity of the discrete convex conjugation inequality \eqref{intro:discrete_convex_conjugation_ineq}, in turn, requires the validity~of~a~contin-uous analogue and the stability of the Bogovski\u{\i} operator with respect to the shifted modular~$	\rho_{\varphi_{\vert \bfD\bfv\vert},\Omega}$. \linebreak In this paper, we will establish that the latter is available under the additional assumption that the viscosity of the fluid is a Muckenhoupt weight of class $2$, \textit{i.e.}, if we have that\vspace*{-0.5mm}
\begin{align}\label{intro:Muckenhoupt_regularity}
	\smash{\mu_{\bfD\bfv} \coloneqq (\delta+\vert \overline{\bfD\bfv}\vert)^{p-2}\in A_2(\mathbb{R}^d)\,,}
\end{align}
where $\overline{\bfD\bfv}\coloneqq \bfD\bfv$ a.e.\ in $\Omega$ and $\overline{\bfD\bfv}\coloneqq \bfzero$ a.e.\  in $\mathbb{R}^d\setminus\Omega$. In \cite{kr-nekorn,kr-nekorn-add}, it turned out that the Muckenhoupt regularity 
assumption \eqref{intro:Muckenhoupt_regularity} can not be expected, in general, in the three dimensional case. However, in the two dimensional case, regularity results (\textit{cf.}\ \cite{KMS2}) suggest that \eqref{intro:Muckenhoupt_regularity} is satisfied under mild assumptions.\enlargethispage{5mm}

\textit{This paper is organized as follows:} 
In Section
\ref{sec:preliminaries}, we introduce the employed~\mbox{notation},~define~relevant function spaces, 
basic assumptions on the extra stress~tensor~$\SSS$~and~its consequences, the weak formulations Problem (\hyperlink{Q}{Q}) and
Problem~(\hyperlink{P}{P})~of~the~system~\eqref{eq:p-navier-stokes}, and the  discrete
operators.~In~Section~\ref{sec:ldg}, we introduce the discrete weak
formulations Problem (\hyperlink{Qhldg}{Q$_h$}) and Problem
(\hyperlink{Phldg}{P$_h$}), recall known a priori error estimates and
prove the main result of the paper, \textit{i.e.}, a quasi-optimal (with
respect to the Muckenhoupt regularity condition
\eqref{intro:Muckenhoupt_regularity}) a priori error estimate for the
kinematic pressure in the case $p>2$ (\textit{cf.}~\hspace*{-0.1mm}Theorem~\hspace*{-0.1mm}\ref{thm:error_pressure}, \hspace*{-0.1mm}Corollary \hspace*{-0.1mm}\ref{cor:error_pressure}).
 \hspace*{-0.1mm}In \hspace*{-0.1mm}Section \hspace*{-0.1mm}\ref{sec:experiments}, \hspace*{-0.1mm}we \hspace*{-0.1mm}review~\hspace*{-0.1mm}the~\hspace*{-0.1mm}theoretical~\hspace*{-0.1mm}findings~\hspace*{-0.1mm}via~\hspace*{-0.1mm}numerical~\hspace*{-0.1mm}experiments.

\newpage
\section{Preliminaries}\label{sec:preliminaries}

\subsection{Basic Notation}

\hspace*{5mm}We use the notation of the papers \cite{kr-pnse-ldg-1,kr-pnse-ldg-2,kr-pnse-ldg-3}. For the convenience of the reader, we repeat~some~of~it.\enlargethispage{2mm}

We employ $c,C>0$ to denote generic constants,~that~may change from line to line, but do not depend on the crucial quantities. Moreover, we~write~$u\sim v$ if and only if there exist constants $c,C>0$ such that $c\, u \leq v\leq C\, u$.

For $k\in \setN$ and $p\in [1,\infty]$, we employ the customary
Lebesgue spaces $(L^p(\Omega), \|\cdot\|_{p,\Omega}) $ and Sobolev
spaces $(W^{k,p}(\Omega),\|\cdot\|_{k,p,\Omega})$, where $\Omega
\subseteq \setR^d$, $d\in \{2,3\}$, is a bounded, polygonal (if $d=2$) or polyhedral (if $d=3$) Lipschitz domain. The space $\smash{W^{1,p}_0(\Omega)}$
is defined as those functions from $W^{1,p}(\Omega)$ whose traces vanish on $\partial\Omega$. 
The Hölder dual exponent is denoted by $p' = \tfrac{p}{p-1} \in [1,
\infty]$. %, whereas $p^*$ is the critical Sobolev~exponent, \textit{i.e.}, if $p > d$, then $p^* = \infty$, while $p^*$ is used as a placeholder~for~any~finite~${q \in [1, \infty)}$~if~$p=d$.

%We do not distinguish between function spaces for scalar,
%vector-~or~\mbox{tensor-valued}~functions.
%\comment{ist aber im paper nicht umgesetzt, vllt sollten wir den Satz
%  hier streichen und es im paper mit den dimensionen lassen. muessen
%  dann nur checken, dass es auch ueberall ist.}%
%  However, w
We denote vector-valued functions by boldface letters~and~tensor-valued
functions by capital boldface letters.
%\comment{ist aber im paper nicht umgesetzt, vllt sollten wir den Satz
%  hier streichen und es im paper mit den dimensionen lassen. muessen
%  dann nur checken, dass es auch ueberall ist.}%
The Euclidean inner product
between two vectors $\bfa =(a_1,\ldots,a_d)^\top,\bfb=(b_1,\ldots,b_d)^\top\in \mathbb{R}^d$ is denoted by 
$\bfa \cdot\bfb\coloneqq \sum_{i=1}^d{a_ib_i}$, while the
Frobenius inner product between two tensors $\bfA=(A_{ij})_{i,j\in \{1,\ldots,d\}},$ $\bfB=(B_{ij})_{i,j\in \{1,\ldots,d\}}\in \mathbb{R}^{d\times d }$ is denoted by
$\bfA: \bfB\coloneqq \sum_{i,j=1}^d{A_{ij}B_{ij}}$.  
Moreover, for a (Lebesgue) measurable set $M\subseteq \mathbb{R}^n$, $n\in \mathbb{N}$,  and (Lebesgue) measurable functions, vector or tensor field $\mathbf{u},\mathbf{w}\in (L^0(M))^{\ell}$, $\ell\in  \mathbb{N}$, we write $(\mathbf{u},\mathbf{w})_M \coloneqq \int_M \mathbf{u} \odot  \mathbf{w}\,\mathrm{d}x$, 
 whenever the right-hand side is well-defined, where $\odot\colon \mathbb{R}^{\ell}\times \mathbb{R}^{\ell}\to \mathbb{R}$ either denotes scalar multiplication, the Euclidean inner  product or the Frobenius inner product.
The integral mean  of an integrable function, vector or tensor field $\mathbf{u}\in (L^0(M))^{\ell}$, $\ell\in  \mathbb{N}$,  over a (Lebesgue) measurable set $M\subseteq \mathbb{R}^n$, $n\in \mathbb{N}$, is denoted by ${\mean{\mathbf{u}}_M \coloneqq \smash{\frac 1 {|M|}\int_M \mathbf{u} \, \mathrm{d}x}}$.

\subsection{N-functions and Orlicz spaces}

\hspace*{5mm}A convex function $\psi \colon \setR^{\geq 0} \to \setR^{\geq 0}$ is called \textit{N-function} if it holds that~${\psi(0)=0}$,~${\psi(t)>0}$~for~all~${t>0}$, $\lim_{t\rightarrow0} \psi(t)/t=0$, and
$\lim_{t\rightarrow\infty} \psi(t)/t=\infty$. 
A Carath\'eodory function $\psi \colon M \times \setR^{\geq 0} \to \setR^{\geq 0}$, where $M\subseteq \mathbb{R}^n$, $n\in \mathbb{N}$, is a (Lebesgue) measurable set, such that $\psi(x,\cdot)$ is an N-function for a.e.\ $x \in M$, is called \textit{generalized N-function}.
We~define the \textit{(convex) conjugate N-function} $\psi^*\colon M\times \setR^{\geq 0} \to \setR^{\geq 0}$ via
${\psi^*(t,x)\coloneqq \sup_{s \geq 0} (st - \psi(s,x))}$ for all $t \ge 0$ and a.e.\ $x\in M$, which satisfies
$(\partial_t(\psi^*))(x,t) =  (\partial_t\psi)^{-1}(x,t)$ for all $t\ge 0$ and a.e.\ $x\in M$. A (generalized) N-function
$\psi$ satisfies the \textit{$\Delta_2$-condition} (in short,
$\psi \in \Delta_2$), if there exists
$K> 2$ such that ${\psi(2\,t,x) \leq K\,\psi(t,x)}$ for all
$t \geq 0$ and a.e.\ $x\in M$. The smallest such constant is denoted by
$\Delta_2(\psi) > 0$.  If one assumes that
both~$\psi$~and~$\psi^*$ satisfy the $\Delta_2$-condition,~then~there~holds
\begin{align} 
  \label{eq:phi*phi'}
  \smash{\psi^*\circ\psi' \sim \psi\,.}
\end{align}
We need the following version of the \textit{$\epsilon$-Young inequality}: for every
${\epsilon> 0}$, there exists a constant $c_\epsilon>0 $,
depending only on $\Delta_2(\psi),\Delta_2( \psi ^*)<\infty$, such
that for every $s,t\geq 0$ and a.e.\ $x\in M$, it holds that
\begin{align} \label{ineq:young}
	s\,t&\leq c_\epsilon \,\psi^*(s,x)+ \epsilon \, \psi(t,x)\,.
\end{align}

\subsection{Basic properties of the extra stress tensor}

\hspace*{5mm}Throughout~the~entire~paper, we will always assume that the extra stress tensor 
$\SSS$
has \textit{$(p,\delta)$-structure}. A detailed
discussion and full proofs can be found, \textit{e.g.}, in
\cite{die-ett,dr-nafsa}. For a given tensor $\bfA\in \setR^{d\times d}$, we denote its symmetric part by
${\bfA^{\textup{sym}}\coloneqq[\bfA]^{\textup{sym}}\coloneqq  \frac{1}{2}(\bfA+\bfA^\top)\in
	\setR^{d\times d}_{\textup{sym}}\coloneqq \{\bfA\in \setR^{d\times
		d}\mid \bfA=\bfA^\top\}}$.

For $p \in (1,\infty)$~and~$\delta\ge 0$, we define the \textit{special N-function}
$\phi=\phi_{p,\delta}\colon\setR^{\ge 0}\to \setR^{\ge 0}$ via
\begin{align} 
	\label{eq:def_phi} 
	\varphi(t)\coloneqq  \int _0^t \varphi'(s)\, \mathrm ds,\quad\text{where}\quad
	\varphi'(t) \coloneqq  (\delta +t)^{p-2} t\,,\quad\textup{ for all }t\ge 0\,.
\end{align}
An important tool in our analysis play {\rm shifted N-functions}
$\{\psi_a\}_{\smash{a \ge 0}}$ (\textit{cf.}\ \cite{DK08,dr-nafsa}). For a given N-function $\psi\colon\mathbb{R}^{\ge 0}\to \mathbb{R}^{\ge
	0}$, we define the family  of \textit{shifted N-functions} ${\psi_a\colon\mathbb{R}^{\ge
		0}\to \mathbb{R}^{\ge 0}}$,~${a \ge 0}$,~by
\begin{align}
	\label{eq:phi_shifted}
	\psi_a(t)\coloneqq  \int _0^t \psi_a'(s)\, \mathrm ds\,,\quad\text{where }\quad
	\psi'_a(t)\coloneqq \psi'(a+t)\frac {t}{a+t}\,,\quad\textup{ for all }t\ge 0\,.
\end{align}

\begin{assumption}[Extra stress tensor]\label{assum:extra_stress} 
	We assume that the extra stress tensor $\SSS\colon\setR^{d\times d}\to \setR^{d\times d}_{\textup{sym}}$ belongs to $C^0(\setR^{d\times d}; \setR^{d\times d}_{\textup{sym}})\cap C^1(\setR^{d\times d}\setminus\{\mathbf{0}\}; \setR^{d\times d}_{\textup{sym}}) $ and satisfies $\SSS(\bfA)=\SSS(\bfA^{\textup{sym}})$ for all $\bfA\in \setR^{d\times d}$ and $\SSS(\mathbf{0})=\mathbf{0}$. Moreover, we assume~that~the~tensor $\SSS=(S_{ij})_{i,j=1,\ldots,d}$ has \textup{$(p,\delta)$-structure}, \textit{i.e.},
	for some $p \in (1, \infty)$, $ \delta\in [0,\infty)$, and the
	N-function $\varphi=\varphi_{p,\delta}$ (\textit{cf.}~\eqref{eq:def_phi}), there
	exist constants $C_0, C_1 >0$ such that
	\begin{align}
		\sum\limits_{i,j,k,l=1}^d \partial_{kl} S_{ij} (\bfA)
		B_{ij}B_{kl} &\ge C_0 \, \frac{\phi'(|\bfA^{\textup{sym}}|)}{|\bfA^{\textup{sym}}|}\,|\bfB^{\textup{sym}}|^2\,,\label{assum:extra_stress.1}
		\\
		\big |\partial_{kl} S_{ij}({\bfA})\big | &\le C_1 \, \frac{\phi'(|\bfA^{\textup{sym}}|)}{|\bfA^{\textup{sym}}|}\label{assum:extra_stress.2}
	\end{align}
	are satisfied for all $\bfA,\bfB \in \setR^{d\times d}$ with $\bfA^{\textup{sym}}\neq \mathbf{0}$ and all $i,j,k,l=1,\ldots,d$.~The~constants $C_0,C_1>0$ and $p\in (1,\infty)$ are called the \textup{characteristics} of $\SSS$. \enlargethispage{2mm}
\end{assumption}

\begin{remark}
	\begin{itemize}[{(ii)}]
		\item[(i)] Assume that $\SSS$ satisfies Assumption \ref{assum:extra_stress} for some 
		$\delta \in [0,\delta_0]$.~Then,~if~not~otherwise~stated, the
		constants in the estimates depend only on the characteristics~of~$\SSS$~and $\delta_0\ge 0$, but are independent of $\delta\ge 0$.
		
		\item[(ii)] Let $\phi$ be defined in \eqref{eq:def_phi} and 
		$\{\phi_a\}_{a\ge 0}$ be the corresponding family of the shifted \mbox{N-functions}. Then, the operators 
		$\SSS_a\colon\mathbb{R}^{d\times d}\to \smash{\mathbb{R}_{\textup{sym}}^{d\times
				d}}$, $a \ge 0$, defined, for every $a \ge 0$
		and~$\bfA \in \mathbb{R}^{d\times d}$, via 
		\begin{align}
			\label{eq:flux}
			\SSS_a(\bfA) \coloneqq 
			\frac{\phi_a'(\abs{\bfA^{\textup{sym}}})}{\abs{\bfA^{\textup{sym}}}}\,
			\bfA^{\textup{sym}}\,, 
		\end{align}
		have $(p, \delta +a)$-structure.  In this case, the characteristics of
		$\SSS_a$ depend~only~on~${p\in (1,\infty)}$ and are independent of
		$\delta \geq 0$ and $a\ge 0$.
	\end{itemize}
\end{remark}

Closely related to the extra stress tensor $\SSS\colon \setR^{d\times d}\to \setR^{d\times d}_{\textup{sym}}$ with
$(p,\delta)$-structure (\textit{cf.}\ Assumption \ref{assum:extra_stress}) is the non-linear mapping
$\bfF\colon\setR^{d\times d}\to \setR^{d\times d}_{\textup{sym}}$, defined, 
for every $\bfA\in \mathbb{R}^{d\times d}$, via 
\begin{align}
	\begin{aligned}
		\bfF(\bfA)&\coloneqq (\delta+\vert \bfA^{\textup{sym}}\vert)^{(p-2)/2}\bfA^{\textup{sym}}\,.
	\end{aligned}\label{eq:def_F}
\end{align}
The connections between
$\SSS,\bfF\colon \setR^{d \times d}
\to \setR^{d\times d}_{\textup{sym}}$ and
$\phi_a,(\phi_a)^*\colon \setR^{\ge
	0}\to \setR^{\ge
	0}$,~${a\ge  0}$, are best explained
by the following proposition (\textit{cf.}~\cite{die-ett,dr-nafsa,dkrt-ldg}).

\begin{proposition}
	\label{lem:hammer}
	Let $\SSS$ satisfy Assumption~\ref{assum:extra_stress}, let $\varphi$ be defined in \eqref{eq:def_phi}, and let $\bfF$ be defined in \eqref{eq:def_F}. Then, uniformly with respect to 
	$\bfA, \bfB \in \setR^{d \times d}$, we have that\vspace{-1mm}
	\begin{align}
          \big(\SSS(\bfA) - \SSS(\bfB)\big):(\bfA-\bfB )
          &\sim  \vert  \bfF(\bfA) - \bfF(\bfB)\vert ^2 \notag
          \\
          &\sim \phi_{\vert \bfA^{\textup{sym}}\vert }(\vert \bfA^{\textup{sym}}
            - \bfB^{\textup{sym}}\vert) \label{eq:hammera}
          \\
          &\sim(\varphi_{\vert\bfA^{\textup{sym}}\vert})^*(\abs{\SSS(\bfA ) - \SSS(\bfB )})\,,\notag
		%	\\&\sim (\phi^*) _{|\SSS ( \bfA^{\textup{sym}})|} (|\SSS (\bfA) - \SSS ( \bfB) |)
          \\
          \label{eq:hammere}
          \abs{\SSS(\bfA) - \SSS(\bfB)}
          &\sim  \smash{\phi'_{\abs{\bfA}}(\abs{\bfA - \bfB})}\,.
	\end{align}
	The constants %in \eqref{eq:hammera} 
	depend only on the characteristics of ${\SSS}$.
\end{proposition}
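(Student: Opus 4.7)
The plan is to derive each of the four quantities in the chain from a single integral representation along the segment $\bfA_t\coloneqq \bfB+t(\bfA-\bfB)$, together with the pointwise bounds of Assumption \ref{assum:extra_stress} and standard facts about the shifted family $\{\varphi_a\}_{a\ge 0}$. Since $\SSS(\bfA)=\SSS(\bfA^{\textup{sym}})$ and $\bfF(\bfA)$ as well as every other quantity in the statement depend only on the symmetric parts of $\bfA,\bfB$, I would assume from the outset that $\bfA,\bfB\in\setR^{d\times d}_{\textup{sym}}$, which allows me to drop the $(\cdot)^{\textup{sym}}$ subscripts throughout the argument.

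First I would prove $(\SSS(\bfA)-\SSS(\bfB)):(\bfA-\bfB)\sim \varphi_{|\bfA|}(|\bfA-\bfB|)$. By the fundamental theorem of calculus applied along $\bfA_t$ and the two-sided pointwise bound obtained by combining \eqref{assum:extra_stress.1} and \eqref{assum:extra_stress.2}, the left-hand side is comparable to $|\bfA-\bfB|^2\int_0^1\varphi'(|\bfA_t|)/|\bfA_t|\,\mathrm{d}t$. The crucial step is then to invoke the classical shifted-N-function integration lemma (see \cite{die-ett,dr-nafsa}),
\begin{align*}
\int_0^1\frac{\varphi'(|\bfA_t|)}{|\bfA_t|}\,\mathrm{d}t\sim\frac{\varphi'_{|\bfA|}(|\bfA-\bfB|)}{|\bfA-\bfB|},
\end{align*}
and combine it with the identity $\varphi_a(t)\sim\varphi'_a(t)\,t$, which is a routine consequence of the uniform $\Delta_2$-condition for $\{\varphi_a\}_{a\ge 0}$ and its conjugate family.

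I would then obtain $|\bfF(\bfA)-\bfF(\bfB)|^2\sim\varphi_{|\bfA|}(|\bfA-\bfB|)$ by the same route: a direct differentiation of $\bfF$ yields $|D\bfF(\bfC)|^2\sim \varphi'(|\bfC|)/|\bfC|$ pointwise (matching the integrand appearing in the step above), after which the integration-lemma argument repeats verbatim. In parallel, estimating $|\SSS(\bfA)-\SSS(\bfB)|$ from above through $\int_0^1|D\SSS(\bfA_t)|\,|\bfA-\bfB|\,\mathrm{d}t$ and \eqref{assum:extra_stress.2}, and from below through the already-established lower bound on $(\SSS(\bfA)-\SSS(\bfB)):(\bfA-\bfB)$ divided by $|\bfA-\bfB|$ combined once again with $\varphi_a(t)\sim\varphi'_a(t)\,t$, delivers \eqref{eq:hammere}.

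Finally, the convex-conjugate equivalence $(\varphi_{|\bfA|})^*(|\SSS(\bfA)-\SSS(\bfB)|)\sim \varphi_{|\bfA|}(|\bfA-\bfB|)$ follows by inserting \eqref{eq:hammere} into \eqref{eq:phi*phi'} applied to the shifted N-function $\varphi_{|\bfA|}$, which inherits the $\Delta_2$-condition uniformly in the shift. The main obstacle is the shifted-N-function integration lemma itself: its proof requires a case split according to whether $|\bfA-\bfB|$ is comparable to $|\bfA|$ or substantially smaller, together with careful monotonicity arguments for the quotient $\varphi'(t)/t$. Fortunately, this lemma is a classical building block of the $(p,\delta)$-structure toolbox and can be cited directly from \cite{die-ett,dr-nafsa}, so the remaining steps are assembled from already-available pieces.
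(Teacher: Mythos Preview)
Your proposal is correct and follows the standard route from the cited literature. Note that the paper itself does not give a proof of this proposition; it merely states the result with the attribution ``(\textit{cf.}~\cite{die-ett,dr-nafsa,dkrt-ldg})'', so there is no in-paper argument to compare against beyond observing that your sketch is precisely the approach developed in those references.
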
 
\begin{remark}\label{rem:sa}
	{\rm
		For the operators $\SSS_a\hspace{-0.1em}\colon\hspace{-0.1em}\mathbb{R}^{d\times d}\hspace{-0.1em}\to\hspace{-0.1em}\smash{\mathbb{R}_{\textup{sym}}^{d\times
				d}}$, $a \ge  0$, defined~in~\eqref{eq:flux},~the~assertions of Proposition \ref{lem:hammer} hold with $\phi\colon\mathbb{R}^{\ge 0}\to \mathbb{R}^{\ge 0}$ replaced
		by $\phi_a\colon\mathbb{R}^{\ge 0}\to \mathbb{R}^{\ge 0}$, $a\ge 0$.}
\end{remark}

The following results can be found in~\cite{DK08,dr-nafsa}. 

\begin{lemma}[Change of Shift]\label{lem:shift-change}
	Let $\varphi$ be defined in \eqref{eq:def_phi} and let $\bfF$ be defined in \eqref{eq:def_F}. Then,
	for each $\varepsilon>0$, there exists $c_\varepsilon\geq 1$ (depending only
	on~$\varepsilon>0$ and the characteristics of $\phi$) such that for every $\bfA,\bfB\in\smash{\setR^{d \times d}_{\textup{sym}}}$ and $t\geq 0$, it holds that
	\begin{align}
		\smash{\phi_{\vert \bfB\vert}(t)}&\leq \smash{c_\varepsilon\, \phi_{\vert \bfA\vert}(t)
			+\varepsilon\, \vert \bfF(\bfB) - \bfF(\bfA)\vert ^2\,,}\label{lem:shift-change.1}
		\\
		\smash{\phi_{\vert \bfB\vert}(t)}&\leq \smash{c_\varepsilon\, \phi_{\vert \bfA\vert} (t)
			+\varepsilon\, \phi_{\vert \bfA\vert}(\vert \vert \bfB\vert - \vert \bfA\vert\vert )\,,}\label{lem:shift-change.2}
		\\
		\smash{(\phi_{\vert \bfB\vert})^*(t)}&\leq \smash{c_\varepsilon\, (\phi_{\vert \bfA\vert})^*(t)
			+\varepsilon\, \vert \bfF(\bfB) - \bfF(\bfA)\vert^2} \,,\label{lem:shift-change.3}
		\\
		\smash{(\phi_{\vert \bfB\vert})^*(t)}&\leq \smash{c_\varepsilon\, (\phi_{\vert \bfA\vert})^*(t)
			+\varepsilon\, \phi_{\vert \bfA\vert}(\vert \vert \bfB\vert - \vert \bfA\vert\vert )}\,.\label{lem:shift-change.4}
	\end{align}
\end{lemma}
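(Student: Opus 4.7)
The plan is to derive all four inequalities of Lemma~\ref{lem:shift-change} from a single scalar shift-change inequality for the N-function $\varphi$, combined with Proposition~\ref{lem:hammer} to translate between scalar and tensorial quantities. Since the shifted N-functions $\varphi_a$ depend only on a scalar parameter, the heart of the matter is to establish that, for every $\varepsilon>0$, there exists $c_\varepsilon\geq 1$, depending only on $\varepsilon$ and the characteristics of $\varphi$, such that
\begin{align*}
\varphi_{b}(t)\leq c_\varepsilon\,\varphi_{a}(t)+\varepsilon\,\varphi_{a}(\vert a-b\vert)\qquad\text{for all }a,b,t\geq 0.
\end{align*}
Specialising to $a=\vert\bfA\vert$, $b=\vert\bfB\vert$ yields \eqref{lem:shift-change.2} directly. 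Inequality \eqref{lem:shift-change.1} then follows from \eqref{lem:shift-change.2} by using the reverse triangle inequality $\vert\vert\bfA\vert-\vert\bfB\vert\vert\leq\vert\bfA^{\textup{sym}}-\bfB^{\textup{sym}}\vert$ (the arguments are symmetric tensors) together with the monotonicity of $\varphi_{\vert\bfA\vert}$, which allows one to replace $\varphi_{\vert\bfA\vert}(\vert\vert\bfB\vert-\vert\bfA\vert\vert)$ by $\varphi_{\vert\bfA\vert}(\vert\bfA^{\textup{sym}}-\bfB^{\textup{sym}}\vert)$; by \eqref{eq:hammera} the latter is equivalent to $\vert\bfF(\bfA)-\bfF(\bfB)\vert^2$.

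For the scalar shift change I would exploit the elementary equivalence $\varphi_a(t)\sim t^2(\delta+a+t)^{p-2}$, which follows directly from \eqref{eq:def_phi} and \eqref{eq:phi_shifted}, and then perform a case split. In the near regime $\vert a-b\vert\leq\tfrac12(\delta+a+t)$, one has $\delta+b+t\sim\delta+a+t$, whence $\varphi_b(t)\sim\varphi_a(t)$ and the first term on the right-hand side dominates. In the far regime $\vert a-b\vert>\tfrac12(\delta+a+t)$ one has $t\leq 2\vert a-b\vert$, and the same equivalence (combined with the $\Delta_2$-property of $\varphi'$, which yields $\varphi'(b+t)\leq C[\varphi'(a+t)+\varphi'(\vert a-b\vert)]$) delivers the crude bound $\varphi_b(t)\leq C[\varphi_a(t)+\varphi_a(\vert a-b\vert)]$. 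A concluding application of the $\varepsilon$-Young inequality \eqref{ineq:young} trades the constant $C$ in front of the second term for the desired $\varepsilon$.

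For the conjugate inequalities \eqref{lem:shift-change.3}-\eqref{lem:shift-change.4}, the key observation is that $(\varphi_a)^*$ is itself (equivalent to) a shifted N-function of dual type, with effective shift comparable to $\varphi'(\delta+a)$. The same scalar argument then yields the conjugate shift change
\begin{align*}
(\varphi_b)^*(t)\leq c_\varepsilon\,(\varphi_a)^*(t)+\varepsilon\,(\varphi_a)^*\bigl(\vert\varphi'(a)-\varphi'(b)\vert\bigr).
\end{align*}
The scalar version of \eqref{eq:hammere} gives $\vert\varphi'(a)-\varphi'(b)\vert\sim\varphi_a'(\vert a-b\vert)$, and the duality identity $(\varphi_a)^*(\varphi_a'(s))\sim\varphi_a(s)$ inherited from \eqref{eq:phi*phi'} converts the right-hand side into $\varphi_a(\vert a-b\vert)$, delivering \eqref{lem:shift-change.4}. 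The tensorial version \eqref{lem:shift-change.3} then follows from \eqref{lem:shift-change.4} in exactly the same way as \eqref{lem:shift-change.1} follows from \eqref{lem:shift-change.2}.

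The main obstacle is the scalar case analysis in the far regime, and specifically the extraction of the Young-type gain $\varepsilon$ with constants uniform in $\delta\geq 0$. For $p<2$ the prefactor $(\delta+a+t)^{p-2}$ is decreasing, so small perturbations of the shift can inflate $\varphi_b(t)$ by large factors; the $\varepsilon$-Young step compensates for this through the reservoir $\varphi_a(\vert a-b\vert)$, which in the far regime grows like $\vert a-b\vert^p$ and thus amply absorbs any polynomial prefactor. For the conjugate case one must additionally verify that the effective shift of $(\varphi_a)^*$ (comparable to $\varphi'(\delta+a)$ rather than to $a$) does not introduce spurious dependencies, a check that is routine given the structural equivalences already available.
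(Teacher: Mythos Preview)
The paper does not give a proof; it records the lemma as a known fact from \cite{DK08,dr-nafsa}. Your overall architecture---reduce to a scalar shift-change for $\varphi_a$, lift to tensors via \eqref{eq:hammera}, and handle the conjugate through the dual shifted structure $(\varphi_a)^*\sim(\varphi^*)_{\varphi'(a)}$---is precisely the route taken in those references, so in that sense your plan is the right one.

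There is, however, a real gap in the scalar step. After the fixed case split at $|a-b|=\tfrac12(\delta+a+t)$ you arrive at the crude bound $\varphi_b(t)\leq C\bigl[\varphi_a(t)+\varphi_a(|a-b|)\bigr]$ and then invoke the $\varepsilon$-Young inequality \eqref{ineq:young} to ``trade the constant $C$ in front of the second term for the desired $\varepsilon$''. This does not work: \eqref{ineq:young} splits a \emph{product} $st$ into $c_\varepsilon\psi^*(s)+\varepsilon\psi(t)$; it cannot shrink the prefactor of an additive term $C\varphi_a(|a-b|)$ without producing another term of exactly the same type. The correct mechanism, used in \cite{DK08,dr-nafsa}, is to make the threshold in the case split $\varepsilon$-dependent. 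For $\theta=\theta(\varepsilon)\in(0,1)$: if $t\geq\theta|a-b|$ then $|a-b|\leq\theta^{-1}t$ forces $\delta+b+t$ and $\delta+a+t$ to be comparable (with constants depending on $\theta$), hence $\varphi_b(t)\leq c(\theta)\,\varphi_a(t)$; if $t<\theta|a-b|$ then, using $\varphi_b(t)\leq\varphi_b(\theta|a-b|)$ together with $\varphi_a(|a-b|)\sim\varphi_b(|a-b|)$, a direct computation with $\varphi_b(s)\sim(\delta+b+s)^{p-2}s^2$ gives $\varphi_b(t)\leq C\theta^{\min(2,p)}\varphi_a(|a-b|)$. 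Choosing $\theta$ with $C\theta^{\min(2,p)}\leq\varepsilon$ then yields \eqref{lem:shift-change.2} with $c_\varepsilon=c(\theta(\varepsilon))$. The remaining steps of your plan are sound.
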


\subsection{The $p$-Navier--Stokes system} 
\hspace*{5mm}Let us briefly recall some well-known facts about the $p$-Navier--Stokes equations
\eqref{eq:p-navier-stokes}. For $p\in (1,\infty)$, we define the function spaces\\[-4.5mm]
\begin{align*}
	\Vo\coloneqq (W^{1,p}_0(\Omega))^d\,,\qquad
	\Qo\coloneqq L_0^{p'}(\Omega)\coloneqq \big\{z\in
	L^{p'}(\Omega)\;|\;\mean{z}_{\Omega}=0\big\}\,.\\[-6mm] 
\end{align*}
With this notation, assuming that $p\ge \frac{3d}{d+2}$, the weak formulation of the $p$-Navier--Stokes equations \eqref{eq:p-navier-stokes} as a non-linear saddle point problem is the following:

\textit{Problem (Q).}\hypertarget{Q}{} For given $\bff \in (L^{p'}(\Omega))^d$, find $(\bfv,q)\in \Vo \times \Qo$ such that  for all $(\bfz,z)^\top\in \Vo\times Q $, it holds\linebreak\hspace*{4.5mm} that
\begin{align}
	(\SSS(\bfD\bfv),\bfD\bfz)_\Omega+([\nabla\bfv]\bfv,\bfz)_\Omega-(q,\divo\bfz)_\Omega&=(\bff ,\bfz)_\Omega\label{eq:q1}\,,\\
	(\divo\bfv,z)_\Omega&=0\label{eq:q2}\,.
\end{align}
Alternatively, we can reformulate Problem (\hyperlink{Q}{Q}) \textit{``hiding''} the kinematic pressure.

\textit{Problem (P).}\hypertarget{P}{} For given $\bff \in (L^{p'}(\Omega))^d$, find $\bfv\in \Vo(0)$ 
such that for all $\bfz\in \Vo(0) $, it holds that
\begin{align}
	(\SSS(\bfD\bfv),\bfD\bfz)_\Omega+([\nabla\bfv]\bfv,\bfz)_\Omega&=(\bff ,\bfz)_\Omega\,,\label{eq:p}
\end{align}
\hspace*{5mm}where $\Vo(0)\coloneqq \{\bfz\in \Vo\mid \divo \bfz=0\}$.

The names \textit{Problem (\hyperlink{Q}{Q})} and \textit{Problem (\hyperlink{P}{P})} are traditional in the literature (\textit{cf.}\  \cite{BF1991,bdr-phi-stokes}).~The~well-posed-ness of Problem (\hyperlink{Q}{Q}) and Problem (\hyperlink{P}{P}) is usually established in two steps:
first, using pseudo-monotone operator theory (\textit{cf.}\ \cite{lions-quel}), the well-posedness of Problem (\hyperlink{P}{P}) is shown; then, given the well-posedness of Problem (\hyperlink{P}{P}), the well-posedness of Problem (\hyperlink{Q}{Q})
follows using DeRham's lemma. There holds the following regularity property of the
pressure if the velocity satisfies a natural regularity assumption.
\begin{lemma}\label{lem:pres}
	Let $\SSS$ satisfy Assumption~\ref{assum:extra_stress} with
	$p\in (2,\infty)$ and $\delta >0$, and let $(\bfv,q)^\top \in \Vo(0)\times \Qo$
	be a weak solution of Problem (\hyperlink{Q}{Q}) with $\bfF(\bfD \bfv) \in
	(W^{1,2}(\Omega))^{d\times d} $. Then, the following statements apply:
	\begin{itemize}[{(ii)}]
		\item[(i)] If $\bff \in  (L^{p'}(\Omega))^d$, then $\nabla q \in  (L^{p'}(\Omega))^d$.
		\item[(ii)] If $\bff \in (L^2(\Omega))^d$, then $(\delta
			+\vert\bfD\bfv\vert)^{2-p}\vert\nabla q\vert ^2 \in  L^1(\Omega)$.
	\end{itemize}
\end{lemma}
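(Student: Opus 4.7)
The starting observation is that testing the momentum balance \eqref{eq:q1} against arbitrary $\bfz\in (C_c^\infty(\Omega))^d$ yields, in the distributional sense, the identity
\begin{align*}
	\nabla q = \bff + \divo\SSS(\bfD\bfv) - [\nabla\bfv]\bfv,
\end{align*}
so that both claims reduce to bounding the right-hand side either in $(L^{p'}(\Omega))^d$ (for (i)) or in weighted $L^2(\Omega)$ with weight $(\delta+\vert \bfD\bfv\vert)^{2-p}$ (for (ii)). The pointwise key bound, which follows from the growth condition \eqref{assum:extra_stress.2} combined with the standard chain-rule identity $\vert\nabla\bfF(\bfD\bfv)\vert^2\sim (\delta+\vert\bfD\bfv\vert)^{p-2}\vert\nabla\bfD\bfv\vert^2$, reads $\vert \nabla\SSS(\bfD\bfv)\vert^2 \leq c\,(\delta+\vert\bfD\bfv\vert)^{p-2}\vert\nabla\bfF(\bfD\bfv)\vert^2$ a.e.\ in $\Omega$.

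\textbf{Part (ii).} Since $p>2$ and $\delta>0$, the weight satisfies $(\delta+\vert\bfD\bfv\vert)^{2-p}\leq \delta^{2-p}$, so the $\bff$-term contributes at most $\delta^{2-p}\|\bff\|_{2,\Omega}^2$. Multiplying the pointwise bound above by $(\delta+\vert\bfD\bfv\vert)^{2-p}$ produces a direct cancellation and yields
\begin{align*}
	\int_\Omega (\delta+\vert\bfD\bfv\vert)^{2-p}\vert\divo\SSS(\bfD\bfv)\vert^2\,\mathrm{d}x \leq c\,\|\nabla \bfF(\bfD\bfv)\|_{2,\Omega}^2 < \infty.
\end{align*}
The convective term is again absorbed by $\delta^{2-p}$, so it only remains to verify $[\nabla\bfv]\bfv\in(L^2(\Omega))^d$, which follows from the enhanced integrability of $\bfv$ discussed below.

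\textbf{Part (i).} The $\bff$-term is immediate. For the stress term, Hölder's inequality with exponents $\tfrac{2}{p'}$ and $\tfrac{2}{2-p'}$, together with the algebraic identity $(p-2)p'/(2-p')=p$, gives
\begin{align*}
	\int_\Omega \vert\divo\SSS(\bfD\bfv)\vert^{p'}\,\mathrm{d}x \leq c\,\|\delta+\vert\bfD\bfv\vert\|_{p,\Omega}^{(2-p')p/2}\,\|\nabla\bfF(\bfD\bfv)\|_{2,\Omega}^{p'},
\end{align*}
and the first factor is finite because $\bfv\in \Vo$.

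\textbf{Main obstacle.} The delicate point is the convective term in part (i): for $p$ close to $2$ in dimension $d=3$, the plain Sobolev embedding $W^{1,p}\hookrightarrow L^{p/(p-2)}$ falls short of controlling $\|[\nabla\bfv]\bfv\|_{p',\Omega}$ via a direct Hölder estimate. The remedy is to exploit $\bfF(\bfD\bfv)\in (W^{1,2}(\Omega))^{d\times d}$: since $\vert\bfF(\bfD\bfv)\vert^2 \sim (\delta+\vert\bfD\bfv\vert)^p - \delta^p$, we conclude $(\delta+\vert\bfD\bfv\vert)^{p/2}\in W^{1,2}(\Omega)$, and Sobolev embedding then yields $\bfD\bfv\in L^{3p}(\Omega)$ for $d=3$ (with arbitrary integrability for $d=2$). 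Since the resulting exponent exceeds $d$, Korn's and Morrey's inequalities give $\bfv\in L^\infty(\Omega)$, whence $\|[\nabla\bfv]\bfv\|_{p',\Omega}\leq \|\bfv\|_{\infty,\Omega}\|\nabla\bfv\|_{p',\Omega}<\infty$ (using $p\geq p'$). The same argument secures $[\nabla\bfv]\bfv\in (L^2(\Omega))^d$ needed in part (ii).
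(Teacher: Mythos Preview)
The paper does not give its own proof of this lemma; it simply cites \cite[Lem.\ 2.6]{kr-pnse-ldg-2}. Your argument is essentially correct and is presumably close in spirit to what the cited reference does: identify $\nabla q$ distributionally with $\bff+\divo\SSS(\bfD\bfv)-[\nabla\bfv]\bfv$ and estimate each term, the key pointwise bound being $\vert\nabla\SSS(\bfD\bfv)\vert\le c\,(\delta+\vert\bfD\bfv\vert)^{(p-2)/2}\vert\nabla\bfF(\bfD\bfv)\vert$.

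Two small points. First, the equivalence $\vert\bfF(\bfD\bfv)\vert^{2}\sim(\delta+\vert\bfD\bfv\vert)^{p}-\delta^{p}$ that you invoke is false near $\bfD\bfv=\bfzero$ (the left-hand side is $\sim\delta^{p-2}\vert\bfD\bfv\vert^{2}$, the right-hand side $\sim\delta^{p-1}\vert\bfD\bfv\vert$). This does not damage your argument, since the conclusion $(\delta+\vert\bfD\bfv\vert)^{p/2}\in W^{1,2}(\Omega)$ follows directly: the function $t\mapsto (\delta+t)^{(p-2)/2}t$ is a $C^1$-bijection of $[0,\infty)$ whose inverse composed with $s\mapsto(\delta+s)^{p/2}$ is globally Lipschitz for $p\ge 2$, so composition with $\vert\bfF(\bfD\bfv)\vert\in W^{1,2}(\Omega)$ gives the claim. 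Second, for the step ``Korn's and Morrey's inequalities give $\bfv\in L^\infty(\Omega)$'' one needs a version of Korn that \emph{upgrades} integrability (from $\bfv\in\Vo$ and $\bfD\bfv\in L^{3p}$ to $\nabla\bfv\in L^{3p}$); such results are available, but note that the paper, in the proof of Theorem~\ref{thm:error_pressure}, reaches $\bfv\in L^\infty(\Omega)$ by the shorter route $\|\bfv\|_{\infty,\Omega}\le c\,\|\bfv\|_{2,2,\Omega}\le c\,\delta^{2-p}\|\bfF(\bfD\bfv)\|_{1,2,\Omega}$ (citing \cite[Lem.\ 4.5]{bdr-7-5}), which bypasses Korn entirely.
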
\vspace*{-5mm}
\begin{proof} See \cite[Lem.\ 2.6]{kr-pnse-ldg-2}.
\end{proof}

\newpage
\subsection{Discussion of Muckenhoupt regularity condition}\label{sec:reg-assumption}

\hspace*{5mm}In this subsection, we examine the \textit{Muckenhoupt regularity condition}
\begin{align}
	\mu_{\bfD\bfv} \coloneqq (\delta+\vert \overline{\bfD\bfv}\vert)^{p-2}\in A_2(\mathbb{R}^d)\,,\label{eq:reg-assumption}
\end{align}
on a solution $\bfv\in (W^{1,p}_0(\Omega))^d$ of Problem (\hyperlink{P}{P}) (or Problem (\hyperlink{Q}{Q}), respectively), where $\overline{\bfD\bfv}\in (L^p(\mathbb{R}^d))^{d\times d}$ is defined via\vspace*{-2mm}\enlargethispage{6mm}
\begin{align*}
	\overline{\bfD\bfv}\coloneqq\begin{cases}
		\bfD\bfv &\text{ a.e.\ in }\Omega\,,\\
		\bfzero  &\text{ a.e.\ in }\mathbb{R}^d\setminus\Omega\,.
	\end{cases}
\end{align*}
In this connection, recall that for given $p\in [1,\infty)$, a weight  $\sigma\colon \mathbb{R}^d\to (0,+\infty)$, \textit{i.e.}, $\sigma \in L^1_{\textup{loc}}(\mathbb{R}^d)$ and $0<\sigma(x)<+\infty$ for a.e.\ $x\in \mathbb{R}^d$, is said to satisfy the \textit{$A_p$-condition}, if
\begin{align*}
	[\sigma]_{A_p(\mathbb{R}^d)}\coloneqq 	\sup_{B\subseteq \mathbb{R}^d\,:\,B\text{ is a ball}}{\langle \sigma\rangle_B(\langle \sigma^{1-p'}\rangle_B)^{p-1}}<\infty\,.
\end{align*}
We denote by $A_p(\mathbb{R}^d)$ the \textit{class of all weights
	satisfying the $A_p$-condition} and set
      $A_\infty(\mathbb{R}^d) \coloneqq \bigcup
      _{p>1}A_p(\mathbb{R}^d)$. Moreover, use \textit{weighted Lebesgue
	spaces} $L^p(\Omega;\sigma)$ equipped with the norm
$\|\cdot\|_{p,\sigma,\Omega}\coloneqq  (\int_\Omega
\vert \cdot\vert^p\, \sigma \, \mathrm{d}x)^{1/p}$.
 
In two dimensions, the Muckenhoupt regularity condition \eqref{eq:reg-assumption} is satisfied under mild assumptions.

\begin{theorem}\label{thm:reg_two_dim}
		Let $\Omega\subseteq \mathbb{R}^2$  be a bounded domain with $C^2$-boundary, $p>2$, and $\bff \in (L^s(\Omega))^2$,~where~$s>2$. Then, there exist $q>2$, $\alpha>0$, and a solution $(\bfv,q)^\top\in \Vo(0)\times \Qo$ of Problem (\hyperlink{Q}{Q}) with the following properties:\vspace*{-2mm}
			\begin{itemize}[{(iii)}]
			\item[(i)] $(\bfv,q)^\top\in (W^{2,q}(\Omega))^d\times W^{1,q}(\Omega)$;
			\item[(ii)] $(\bfv,q)^\top\in (C^{1,\alpha}(\overline{\Omega}))^d\times C^{0,\alpha}(\overline{\Omega})$.
		\end{itemize}
\end{theorem}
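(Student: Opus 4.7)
The plan is to combine standard existence theory with the two-dimensional $C^{1,\alpha}$-regularity of \cite{KMS2} to establish (ii), and then to bootstrap via the linearized Stokes system to establish (i). For the existence, I would apply pseudo-monotone operator theory to Problem~(\hyperlink{P}{P}) (the growth condition $p>2\geq\frac{3d}{d+2}$ for $d=2$ controls the convective term) and recover the pressure via DeRham's lemma, obtaining a solution $(\bfv,q)^\top\in\Vo(0)\times\Qo$ of Problem~(\hyperlink{Q}{Q}) as outlined before Lemma~\ref{lem:pres}.

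For (ii), I would invoke the $C^{1,\alpha}$-regularity theorem of Kaplick\'y--M\'alek--Star\'a (\cite{KMS2}), which under the present hypotheses---$\Omega$ a $C^2$-domain in $\mathbb{R}^2$, $p>2$, $\bff\in(L^s(\Omega))^2$ with $s>2$---produces some $\alpha\in(0,1)$ with $\bfv\in(C^{1,\alpha}(\overline{\Omega}))^2$. Hence $\bfD\bfv$ is bounded and Hölder continuous, and by continuity of $\SSS$ (which is Hölder near $\mathbf{0}$ since $p>2$) one obtains $\SSS(\bfD\bfv)\in(C^{0,\alpha}(\overline{\Omega}))^{d\times d}$, and likewise $[\nabla\bfv]\bfv\in(C^{0,\alpha}(\overline{\Omega}))^d$. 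The Hölder regularity of the pressure follows from the same reference (or, alternatively, by applying generalized Stokes theory to $-\divo\SSS(\bfD\bfv)+\nabla q=\bff-[\nabla\bfv]\bfv$ with no-slip boundary conditions), giving $q\in C^{0,\alpha}(\overline{\Omega})$.

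For (i), I would linearize the momentum equation along the known velocity. Setting $A_{ij}^{kl}(x)\coloneqq\partial_{kl}S_{ij}(\bfD\bfv(x))$, the bounds in Assumption~\ref{assum:extra_stress} together with the $L^\infty$-bound on $\bfD\bfv$ (and $\delta>0$) yield uniformly elliptic, bounded coefficients, while the $C^{0,\alpha}$-regularity of $\bfD\bfv$ upgrades them to Hölder continuity. The momentum equation then becomes a generalized Stokes system with $C^{0,\alpha}$-coefficients, no-slip boundary data on a $C^2$-domain, and right-hand side $\bff-[\nabla\bfv]\bfv\in(L^s(\Omega))^2$. Classical $L^q$-regularity for linear Stokes systems (via localization, coefficient freezing, and the corresponding interior and boundary estimates) then delivers $\bfv\in(W^{2,s}(\Omega))^2$ and the associated pressure in $W^{1,s}(\Omega)$, so (i) holds with the choice $q\coloneqq s>2$.

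The main obstacle is the invocation of \cite{KMS2}: the underlying Hölder-regularity result is genuinely two-dimensional and rests on planar Caccioppoli estimates, Moser-type iteration adapted to the $p$-growth nonlinearity, and boundary flattening for the $C^2$-domain---precisely the reason, as noted in the introduction, that the Muckenhoupt regularity condition~\eqref{eq:reg-assumption} cannot be expected to hold in three dimensions. Once \cite{KMS2} is cited, the pressure bootstrap and the subsequent $W^{2,s}$/$W^{1,s}$-theory are essentially routine.
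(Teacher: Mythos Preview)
Your proposal is correct and essentially aligned with the paper: the paper's proof consists solely of the citation ``See \cite[Thm.~6.1]{KMS2}'', so the result is quoted wholesale from Kaplick\'y--M\'alek--Star\'a rather than re-derived. Your sketch is more detailed than necessary: the cited theorem already delivers both the $W^{2,q}\times W^{1,q}$ and the $C^{1,\alpha}\times C^{0,\alpha}$ regularity simultaneously, so your separate bootstrap for (i) via the linearized Stokes system is a valid but redundant route---it reproduces part of what \cite{KMS2} already contains.
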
\vspace*{-5mm}

\begin{proof}
	See \cite[Thm. 6.1]{KMS2}.
\end{proof}

\begin{remark}\label{rem:muckenhoupt}
	\begin{itemize}[{(ii)}]
		\item[(i)] If $\vert \bfD\bfv\vert\in L^\infty(\Omega)$, $\delta>0$,  and $p>2$, then we have that $\delta^{p-2}\leq\mu_{\bfD\bfv}\leq (\delta+\|\bfD\bfv\|_{\infty,\Omega})^{p-2}$ a.e.\ in $\Omega$, 
		so that the Muckenhoupt regularity condition \eqref{eq:reg-assumption} is trivially satisfied. As a result, under the assumptions of Theorem \ref{thm:reg_two_dim}, the regularity assumption \eqref{eq:reg-assumption} is satisfied.
		\item[(ii)] We believe that it is possible to prove Theorem \ref{thm:reg_two_dim} without the $C^2$-boundary assumption for polygonal, convex domains.
	\end{itemize}
\end{remark}

The following result implies that, in three dimensions, one cannot hope for the regularity assumption to be satisfied, in general.

\begin{theorem}\label{thm:counter_example}
	Let $\Omega\subseteq \mathbb{R}^d$, $d\ge 2$,  be a bounded domain with $C^{1,1}$-boundary. Then, there exists a vector field $\bfv\colon \Omega\to \mathbb{R}^d$ with the following properties:\vspace*{-2mm}
	\begin{itemize}[{(iii)}]
		\item[(i)] $\bfv\in (W^{2,q}(\Omega))^d\cap (W^{1,s}_0(\Omega))^d$ for all $q\in (1,d)$ and $s\in (1,\infty)$;
		\item[(ii)] $\textup{tr}_{\partial\Omega}(\nabla \bfv) \not\equiv \bfzero$;
		\item[(iii)] $\divo \bfv=  0$ a.e.\ in $\Omega$;
		\item[(iv)] $\mu_{\bfD\bfv}\coloneqq(\delta+\vert \overline{\bfD\bfv}\vert)^{p-2}\notin A_\infty(\mathbb{R}^d)$ for all $p\in (1,\infty)\setminus\{2\}$ and $\delta\in [0,1]$.
	\end{itemize}
\end{theorem}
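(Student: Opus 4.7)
The plan is to construct $\bfv$ by explicit local analysis near a single boundary point $x_0\in\partial\Omega$, exploiting the $C^{1,1}$-regularity of $\partial\Omega$ to flatten the boundary. Concretely, fix a $C^{1,1}$-diffeomorphism $\Phi$ mapping a neighborhood $U$ of $x_0$ onto a neighborhood of the origin in the closed half-space $\{y_d\geq 0\}$. In the straightened coordinates $y=\Phi(x)$, build a local divergence-free vector field via a potential: for $d=2$ take $\bfu=\nabla_y^{\perp}\psi$, while for $d=3$ take $\bfu=\operatorname{curl}_y(\psi\,\mathbf{e}_1)$, where $\psi(y)=\chi(y)\,y_d^2\log\vert y\vert$ and $\chi$ is a smooth cutoff with $\chi(0)=1$ supported in the chart. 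The global field $\bfv$ is then the Piola pull-back of $\bfu$ through $\Phi$, extended by zero.

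Since $\psi$ and $\partial_{y_d}\psi$ both vanish on $\{y_d=0\}$, the pull-back $\bfv$ satisfies $\bfv|_{\partial\Omega}=\bfzero$, and the Piola transform together with the potential form guarantees $\divo\bfv=0$, establishing (iii). The regularity in (i) is verified by locating the leading singularity: $\nabla^2\psi\sim 1/\vert y\vert$ near $y=0$, so $\bfv\in(W^{2,q}(\Omega))^d$ precisely for $q\in(1,d)$, while $\nabla\psi\sim\log\vert y\vert$ lies in every local $L^s$, giving $\bfv\in(W^{1,s}_0(\Omega))^d$ for every finite $s$. For (ii), a direct computation yields $\partial_{y_d}^2\psi\big|_{y_d=0}=2\chi(y',0)\log\vert y'\vert\not\equiv 0$, and this non-trivial trace propagates through $\Phi$ to $\operatorname{tr}_{\partial\Omega}(\nabla\bfv)$.

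The main content is (iv). For $\delta=0$ the conclusion is immediate: $\overline{\bfD\bfv}=\bfzero$ on $\mathbb{R}^d\setminus\Omega$, a set of positive measure, so $\mu_{\bfD\bfv}$ vanishes identically there when $p>2$ and equals $+\infty$ there when $p<2$; in either case it fails to be a positive and locally integrable weight, hence $\mu_{\bfD\bfv}\notin A_\infty(\mathbb{R}^d)$. For $\delta\in(0,1]$ we evaluate the $A_p$-quotient on the shrinking family of balls $B_r(x_0)$, $r\downarrow 0$. In the chart the construction produces $\vert\bfD\bfv\vert\sim\vert\log\vert y\vert\vert$ on $B_r(x_0)\cap\Omega$, while $\vert\overline{\bfD\bfv}\vert=0$ on $B_r(x_0)\setminus\Omega$, which still occupies a fixed positive fraction of $B_r(x_0)$. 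A direct estimate then yields
\begin{align*}
	\langle\mu_{\bfD\bfv}\rangle_{B_r(x_0)}\,\big(\langle\mu_{\bfD\bfv}^{1-p'}\rangle_{B_r(x_0)}\big)^{p-1}\;\gtrsim\;\vert\log r\vert^{\vert p-2\vert}\;\longrightarrow\;\infty\quad\text{as }\; r\to 0^+\,,
\end{align*}
because the interior logarithmic blow-up drives whichever factor carries a positive power of $(p-2)$ to infinity, while the $\mathbb{R}^d\setminus\Omega$-portion keeps the complementary factor bounded below by a positive constant depending only on $\delta$. Consequently $\mu_{\bfD\bfv}\notin A_p(\mathbb{R}^d)$ for every $p>1$, and therefore $\mu_{\bfD\bfv}\notin A_\infty(\mathbb{R}^d)$.

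The hard part is locating the correct singularity profile for $\psi$: a stronger singularity would destroy $\bfv\in(W^{1,s}_0(\Omega))^d$ for large $s$, whereas a weaker singularity would leave $\mu_{\bfD\bfv}$ bounded and trivially $A_\infty$ for $\delta>0$. The logarithmic blow-up of $\vert\bfD\bfv\vert$, combined with the boundary asymmetry created by the zero-extension, is precisely what renders the $A_\infty$ failure uniform over the full parameter range $(p,\delta)\in\big((1,\infty)\setminus\{2\}\big)\times[0,1]$.
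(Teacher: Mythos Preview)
The paper does not prove this theorem; it cites \cite{kr-nekorn,kr-nekorn-add}. From Section~\ref{sec:experiments} one can read off the construction in those references: it is a superposition $\bfv=\sum_{k\in\mathbb N}\bfv^k$ of explicit rotational fields, each $\bfv^k$ compactly supported in a ball $B_{r^k}(\mathbf m^k)\subset\Omega$ with $|\bfD\bfv^k|\sim k$, the balls being disjoint and accumulating at a boundary point. The $A_\infty$ failure then comes from the oscillation of $|\bfD\bfv|$ between $\sim k$ on $B_{r^k}(\mathbf m^k)$ and $0$ on the complement. Since every $\bfv^k$ lives strictly inside $\Omega$, no boundary chart is needed and the $C^{1,1}$ hypothesis is used only to place the balls.

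Your single–potential construction is different and the analysis of (iv) is correct: the logarithmic blow-up of $|\bfD\bfv|$ together with the zero extension indeed forces the $A_q$ quotient on $B_r(x_0)$ to diverge like $|\log r|^{|p-2|}$ for every $q>1$. There is, however, a genuine gap in (i). The Piola pull-back has the form $\bfv=M\,(\bfu\circ\Phi)$ with $M$ a polynomial in the entries of $D\Phi$. For a $C^{1,1}$ chart one only has $D\Phi\in W^{1,\infty}$, hence $M\in W^{1,\infty}$, and $\nabla^2 M$ need not be a function: take $\partial\Omega$ locally the graph of $f(x')=(x_1')_+^2$, so that $D^2f$ has a jump and $D^3f$ is a surface measure. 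In the expansion $\nabla^2\bfv=M\nabla^2(\bfu\circ\Phi)+2\nabla M\otimes\nabla(\bfu\circ\Phi)+(\nabla^2 M)(\bfu\circ\Phi)$ the last term is then a distribution of order one, and the vanishing $\bfu\circ\Phi\sim|x-x_0|\log|x-x_0|$ does not suffice to render it an $L^q$ function. Thus $\bfv\in (W^{2,q}(\Omega))^d$ is unjustified for general $C^{1,1}$ domains. Your strategy would go through for $C^{2,1}$ boundary, or you can bypass the chart entirely by placing the singular mechanism strictly in the interior (which is exactly what the cited construction does).
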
\vspace*{-5mm}

\begin{proof}
 See \cite[Thm. 2.1]{kr-nekorn} and \cite[Thm. 2]{kr-nekorn-add}.
\end{proof}

\begin{remark}
		Let $\bfv \colon \Omega\to \mathbb{R}^3$ be the vector
                field from Theorem \ref{thm:counter_example} in the
                case $d = 3$. Setting ${\bff \coloneqq
                  \divo(\bfv\otimes\bfv)-\divo \bfS(\bfD\bfv)}$, where
                $\bfS(\bfD\bfv)\coloneqq (\delta+\vert \bfD\bfv\vert)^{p-2}\bfD\bfv$, we see that $\bfv$ and $q \equiv 0$ are a weak solution of the $p$-Navier--Stokes system \eqref{eq:p-navier-stokes} for any $p \in  (1, \infty)$ and $\delta\in [0,1]$. Due  to \cite[Rem.\ 2.3]{kr-nekorn} and \cite{kr-nekorn-add}, we have that $\bfF(\bfD\bfv)\in (W^{1,2}(\Omega))^{3\times 3}$ and $\bff \in (L^2(\Omega))^3$~for~all~$p \in (1, \infty)$.
\end{remark}

\subsection{DG spaces, jumps and averages}\label{sec:dg-space}

\subsubsection{Triangulations}\enlargethispage{4mm}

\hspace*{5mm}We always denote by $\{\pazocal{T}_h\}_{h>0}$ a family of regular (\textit{i.e.}, uniformly shape 
regular and conforming, \textit{cf.} \cite{BS08}) triangulations
of $\Omega\subseteq \setR^d$, $d\in \set{2,3}$, each
consisting of $d$-dimensional simplices $K$.  Here, the parameter
$h>0$, refers to the \textit{maximal mesh-size} of $\pazocal{T}_h$, for which we always assume that $h \le 1$. Moreover, we always assume that the chunkiness is bounded by some constant $\omega_0>0$, independent~of~$h$. By $\Gamma_h^{i}$, we denote the interior faces, and put
$\Gamma_h\coloneqq  \Gamma_h^{i}\cup \partial\Omega$. For an interior face
$\gamma= \partial K \cap \partial K' \in \Gamma^i_h$, we set $\omega_\gamma\coloneqq
K\cup K'$, and for a boundary face  $\gamma = K \cap \partial \Omega$, we set
$\omega_\gamma\coloneqq K$.  We
assume that each simplex $ K \in \pazocal{T}_h$ has at most one face from $\partial\Omega$.  
For (Lebesgue) measurable functions, vector or tensor fields $\mathbf{u},\mathbf{w}\in (L^0(\Gamma_h))^{\ell}$, $\ell\in  \mathbb{N}$, we write  
\begin{align*}
	\langle \mathbf{u},\mathbf{w}\rangle_{\Gamma_h} \coloneqq  \smash{\sum_{\gamma \in \Gamma_h} {\langle \mathbf{u}, \mathbf{w}\rangle_\gamma}}\,,\quad\text{ where }\quad\langle \mathbf{u}, \mathbf{w}\rangle_\gamma\coloneqq \int_\gamma \mathbf{u}\odot \mathbf{w} \,\textup{d}s\quad\text{ for all }\gamma\in \Gamma_h\,,
\end{align*}
whenever all the integrals are well-defined, where $\odot\colon \mathbb{R}^{\ell}\times \mathbb{R}^{\ell}\to \mathbb{R}$ either denotes scalar multiplication, the Euclidean inner  product or the Frobenius inner product.
Analogously, we define the products 
$\smash{\skp{\cdot}{\cdot}_{\partial\Omega}}$ and~$\smash{\skp{\cdot}{\cdot}_{\Gamma_h^{i}}}$. We extend the notation of
modulars to the sets $\smash{\Gamma_h^{i}}$, 
$\partial \Omega$, and $\smash{\Gamma_h}$, \textit{i.e.}, we
define the modulars ${\rho_{\psi,M}(\mathbf{u})\coloneqq  \smash{\int_M
		\psi(\abs{\mathbf{u}})\,\textup{d}s}}$ for all $\mathbf{u}\in \smash{(L^\psi(M))^{\ell}}$, $\ell\in  \mathbb{N}$, where $M= \smash{\Gamma_h^{i}}$,~${M=\partial \Omega}$,~or~${M=\smash{\Gamma_h}}$.

\subsubsection{Broken function spaces and projectors}
\hspace*{5mm}For every
$m \hspace{-0.1em}\in\hspace{-0.1em}
\setN_0$~and~${K\hspace{-0.1em}\in\hspace{-0.1em} \pazocal{T}_h}$, we
denote by $\mathbb{P}_m(K)$ the space of polynomials of
degree at most $m$ on $K$. Then, for given $k \in \setN_0$ and $p\in
(1,\infty)$,  
we define the spaces
\begin{align}
	\begin{split}
		Q_h^k&\coloneqq \big\{ z_h\in L^1(\Omega)\;\big |\; z_h|_K\in \mathbb{P}_k(K)\text{ for all }K\in \pazocal{T}_h\big\}\,,\\
		V_h^k&\coloneqq \big\{\bfz_h\in (L^1(\Omega))^d\;\big |\; \bfz_h|_K\in (\mathbb{P}_k(K))^d\text{ for all }K\in \pazocal{T}_h\big\}\,,\\
		X_h^k&\coloneqq \big\{\bfX_h\in (L^1(\Omega))^{d\times d}\;\big |\; \bfX_h|_K\in (\mathbb{P}_k(K))^{d\times d}\text{ for all }K\in \pazocal{T}_h\big\}\,,\\
		W^{1,p}(\mathcal T_h)&\coloneqq \big\{ w_h\in
                L^1(\Omega)\;\big |\; w_h|_K\in W^{1,p}(K)\text{ for all }K\in \pazocal{T}_h\big\}\,.
	\end{split}\label{eq:2.19}
\end{align}
In addition, for given $k \in \setN_0$, we set $\Qhkc
\coloneqq Q_h^k\cap C^0(\overline{\Omega})$.
Note that $ {W^{1,p}(\Omega)\subseteq
  \WDG}$ and that ${Q_h^k\subseteq \WDG}$.
%In addition, for given $k \hspace*{-0.1em}\in\hspace*{-0.1em}  \setN_0$, we set $\Qhkc\hspace*{-0.1em}\coloneqq \hspace*{-0.1em} Q_h^k\cap C^0(\overline{\Omega})$.
%Note that $ {W^{1,p}(\Omega)\hspace*{-0.1em}\subseteq\hspace*{-0.1em}
%\WDG}$~and~${Q_h^k\hspace*{-0.1em}\subseteq\hspace*{-0.1em} \WDG}$.
We denote by $\PiDG \colon (L^1(\Omega))^d\to V_h^k$, the \textit{(local)
$L^2$-projection} into $V_h^k$, which for every $\bfv \in
(L^1(\Omega))^d$ and $\bfz_h
\in V_h^k$ is defined via  $(\PiDG \bfv,\bfz_h)_\Omega=(\bfv,\bfz_h)_\Omega$. 
Similarly, we define the (local) \mbox{$L^2$-projection}~into~$\Xhk$, \textit{i.e.}, $\PiDG\colon (L^1(\Omega))^{d\times d} \to \Xhk$.

For every  $\bfw_h\in (\WDG)^d$, we denote by $\nabla_h \bfw_h\in (L^p(\Omega))^{d\times d}$,
the element-wise~gradient, defined via
$(\nabla_h \bfw_h)|_K\hspace*{-0.1em}\coloneqq\hspace*{-0.1em} \nabla(\bfw_h|_K)$
for~all~${K\hspace*{-0.1em}\in\hspace*{-0.1em}\pazocal{T}_h}$.  Then, given a multiplication operator~${\odot\colon \hspace*{-0.1em}\mathbb{R}^d\hspace*{-0.1em}\times \hspace*{-0.1em}\mathbb{R}^d\hspace*{-0.1em}\to\hspace*{-0.1em} \mathbb{R}^{\ell}}$,~${m,l\hspace*{-0.1em}\in \hspace*{-0.1em}\setN}$, for
every $\bfw_h\in (\WDG)^d$ and interior faces $\gamma\in \Gamma_h^{i}$ shared by
adjacent elements $K^-_\gamma, K^+_\gamma\in \pazocal{T}_h$, we define
via
\begin{align}
	\{\bfw_h\}_\gamma&\coloneqq \tfrac{1}{2}(\textup{tr}_\gamma^{K^+}(\bfw_h)+
	\textup{tr}_\gamma^{K^-}(\bfw_h))\in
	(L^p(\gamma))^{\ell}\,, \label{2.20}\\
	\llbracket\bfw_h\odot \bfn\rrbracket_\gamma
	&\coloneqq \textup{tr}_\gamma^{K^+}(\bfw_h)\odot \bfn^+_\gamma+
	\textup{tr}_\gamma^{K^-}(\bfw_h)\odot \bfn_\gamma^- 
	\in (L^p(\gamma))^{\ell}\,,\label{eq:2.21}
\end{align}
the \textit{average} and \textit{normal jump}, respectively, of $\bfw_h$ on $\gamma$.
Moreover,  for boundary faces $\gamma\in \partial\Omega$, we define \textit{boundary averages} and 
\textit{boundary jumps}, respectively, via
\begin{align}
	\{\bfw_h\}_\gamma&\coloneqq \textup{tr}^\Omega_\gamma(\bfw_h) \in (L^p(\gamma))^{\ell}\,,\label{eq:2.23a} \\
	\llbracket \bfw_h\odot\bfn\rrbracket_\gamma&\coloneqq 
	\textup{tr}^\Omega_\gamma(\bfw_h)\odot\bfn \in (L^p(\gamma))^{\ell}\,,\label{eq:2.23} 
\end{align}
where $\bfn\colon\partial\Omega\to \mathbb{S}^{d-1}$ denotes the unit normal vector field to $\Omega$ pointing outward. 
If there is no
danger of confusion, then we will omit the index $\gamma\in
\Gamma_h$,~in~particular,  if we interpret jumps and averages as
global functions defined on whole $\Gamma_h$. %Moreover, for interior faces $\gamma\in \Gamma_h^{i}$ shared by
%adjacent elements $K^-_\gamma, K^+_\gamma\in \pazocal{T}_h$, we set 
%$\omega_\gamma\coloneqq K^-_\gamma\cup K^+_\gamma$.

\subsubsection{DG gradient, symmetric gradient, divergence and jump operators}

\hspace*{5mm}For every $k\in \mathbb{N}_0$,  the  \textit{lifting operator} $\setRhk\colon(\WDG)^d \to X_h^k$, for every 
 $\bfw_h\in \smash{(\WDG)^d}$ and  $\bfX_h\in X_h^k$, (via~Riesz~representation) is defined via
\begin{align}
	(\setRhk\bfw_h,\bfX_h)_\Omega=\langle
		\llbracket\bfw_h\otimes\bfn\rrbracket,\{\bfX_h\}\rangle_{\Gamma_h}\,.\label{eq:2.25.1}
\end{align}
For $k\in \mathbb{N}_0$, the \textit{DG gradient operator} $  \Ghk\colon (\WDG)^d\to (L^p(\Omega))^{d\times d}$, the \textit{DG symmetric gradient operator} $  \Dhk\colon \hspace*{-0.15em}(\WDG)^d\hspace*{-0.15em}\to\hspace*{-0.15em} (L^p(\Omega))^{d\times d}$, and the \textit{DG divergence~operator}~$ {\Divhk\colon \hspace*{-0.15em}(\WDG)^d\hspace*{-0.15em}\to\hspace*{-0.15em} L^p(\Omega)}$,  for every $\bfw_h\in \smash{(\WDG)^d}$, are defined via
\begin{align*}
	\begin{aligned}
		\Ghk\bfw_h&\coloneqq \nabla_h \bfw_h -\setRhk \bfw_h&&\quad\text{ in }(L^p(\Omega))^{d\times d}\,,\\
		\Dhk\bfw_h&\coloneqq [\Ghk\bfw_h]^{\textup{sym}}&&\quad\text{ in }(L^p(\Omega))^{d\times d}\,,\\
		\Divhk\bfw_h&\coloneqq \textrm{tr}(\Ghk\bfw_h)&&\quad\text{ in }L^p(\Omega)\,.
	\end{aligned}
\end{align*}
In addition, for every $\bfw_h\in \smash{(\WDG)^d}$, we introduce the \textit{DG norm} and the \textit{symmetric DG norm} as
\begin{align}
	\|\bfw_h\|_{\nabla,p,h}&\coloneqq \|\nabla_h\bfw_h\|_{p,\Omega}+h^{1/p}\|h^{-1}\jump{\bfw_h\otimes \bfn}\|_{p,\Gamma_h}\,,\\
	\|\bfw_h\|_{\bfD,p,h}&\coloneqq \|\bfD_h\bfw_h\|_{p,\Omega}
	+h^{1/p}\|  h^{-1} \llbracket\bfw_h\otimes\bfn\rrbracket\|_{p,\Gamma_h}\,.
\end{align}
Owing to \cite[(A.26)--(A.28)]{dkrt-ldg} and \cite[Prop. 2.5]{kr-pnse-ldg-1}, there exists a constant $c>0$, independent of $h>0$, such that for every $\bfw_h\in \smash{(\WDG)^d}$, it holds that  
\begin{align}
   c^{-1}\,\|\bfw_h\|_{\nabla,p,h}\leq \|\Ghk\bfw_h\|_{p,\Omega}+h^{1/p}\|h^{-1}\jump{\bfw_h\otimes \bfn}\|_{p,\Gamma_h}\leq c\,\|\bfw_h\|_{\nabla,p,h}\label{eq:eqiv0}\,,\\
   c^{-1}\,\|\bfw_h\|_{\bfD,p,h}\leq \|\Dhk\bfw_h\|_{p,\Omega}+h^{1/p}\|h^{-1}\jump{\bfw_h\otimes \bfn}\|_{p,\Gamma_h}\leq c\,\|\bfw_h\|_{\bfD,p,h}\label{eq:eqiv0.1}\,.
\end{align}
Owing to \cite[Prop. 2.4]{kr-pnse-ldg-1}, there exists a constant $c>0$, independent of $h>0$,  such that for every $\bfz_h\in \Vhk$, it holds that
\begin{align}\label{discrete_korn}
		\|\bfz_h\|_{\nabla,p,h}\leq  c\,\|\bfz_h\|_{\bfD,p,h}\,.
\end{align}
For a generalized N-function $\psi\colon \Omega\times \setR^{\ge 0}\to \setR^{\ge 0}$,  the pseudo-modular\footnote{The
		definition of an pseudo-modular can be found in \cite{Mu}. We
	extend the notion of DG Sobolev spaces to  DG Sobolev-Orlicz
	spaces $W^{1,\psi}(\mathcal T_h)\coloneqq \big\{w_h\in L^1(\Omega)\mid w_h|_K\in W^{1,\psi}(K)\text{ for all }K\in \pazocal{T}_h\big\}$.} ${m_{\psi,h}\colon \hspace*{-0.15em}(W^{1,\psi}(\mathcal T_h))^d\hspace*{-0.15em}\to\hspace*{-0.15em} \mathbb{R}^{\ge 0}}$  and the modular $M_{\psi,h}\colon (W^{1,\psi}(\mathcal T_h))^d\to \mathbb{R}^{\ge 0}$, for every $\bfw_h\in (W^{1,\psi}(\mathcal T_h))^d$, are defined via
\begin{align} 
	m_{\psi,h}(\bfw_h)&\coloneqq  h\,\rho_{\psi,\smash{\Gamma_h}}(h^{-1}\jump{\bfw_h\otimes \bfn})\,,\label{def:mh.1}\\
	M_{\psi,h}(\bfw_h)&\coloneqq  \rho_{\psi,\Omega}(\nabla_h \bfw_h)+m_{\psi,h}(\bfw_h)\,. \label{def:mh.2}
\end{align}
For $\psi = \phi_{p,0}$, for every ${\bfw_h\in (W^{1,\psi}(\mathcal T_h))^d}$, it holds that
\begin{align*}
	m_{\psi,h}(\bfw_h)&=h\,\|h^{-1}\jump{\bfw_h\otimes
		\bfn}\|_{p,\Gamma_h}^p\,,\\
	M_{\psi,h}(\bfw_h)&= \|\bfw_h\|_{\nabla,p,h}^p\,.
\end{align*}

\section{Local discontinuous Galerkin (LDG) approximation}\label{sec:ldg}

\hspace*{5mm}In this section, we recall the LDG formulation proposed in \cite{kr-pnse-ldg-1,kr-pnse-ldg-2,kr-pnse-ldg-3} and the a priori error estimates derived therein. In addition, we derive a new quasi-optimal a priori error estimate for the kinematic pressure.

\subsection{LDG formulations}\label{sec:fluxed}

\hspace*{5mm}Appealing to \cite{kr-pnse-ldg-1,kr-pnse-ldg-2,kr-pnse-ldg-3}, an LDG formulation of Problem (\hyperlink{Q}{Q}) reads:

\textit{Problem (Q$_h$).}\hypertarget{Qhldg}{} For given $\bff \in (L^{p'}(\Omega))^d$ and $\alpha>0$, find $(\bfv_h,q_h)^\top\in \Vhk\times \Qhkco$ such that for every $ (\bfz_h,z_h)^\top \in \Vhk\times\Qhkc$, it holds that
\begin{align}\label{eq:primal1}
	\begin{aligned}
	(\SSS(\Dhk \bfv_h)-\tfrac{1}{2}\bfv_h\otimes \bfv_h-q_h\mathbf{I}_d,\Dhk
		\bfz_h)_\Omega
	&=     (\bff -\tfrac{1}{2}[\Ghk \bfv_h]\bfv_h,\bfz_h)_\Omega
	\\
	&\quad - \alpha \,\langle\SSS_{\smash{\sssl}}(h^{-1} \jump{\bfv_h\otimes
		\bfn}), \jump{\bfz_h \otimes \bfn}\rangle_{\Gamma_h}\,,\\
	(\Divhk \bfv_h,z_h)_\Omega&=0\,,
\end{aligned}
\end{align}   
where $\Qhkco\coloneqq \Qhkc\cap L^1_0(\Omega)$. The LDG formulation of Problem (\hyperlink{P}{P}) reads:

\textit{Problem (P$_h$).}\hypertarget{Phldg}{} For given $\bff \in (L^{p'}(\Omega))^d$ and $\alpha>0$, find $\bfv_h\in V_h^k(0)$ such that for every $\bfz_h\in V_h^k(0)$, it holds that
\begin{align}\label{eq:primal2}
	\begin{aligned}
		(\SSS(\Dhk \bfv_h)-\tfrac{1}{2}\bfv_h\otimes \bfv_h,\Dhk
		\bfz_h)_\Omega&=     (\bff -\tfrac{1}{2}[\Ghk \bfv_h]\bfv_h,\bfz_h)_\Omega\\
	&\quad - \alpha \,\langle\SSS_{\smash{\sssl}}(h^{-1} \jump{\bfv_h\otimes
		\bfn}), \jump{\bfz_h \otimes \bfn}\rangle_{\Gamma_h}\,,
	\end{aligned}
\end{align} 
where $V_h^k(0)\coloneqq \{\bfz_h \in \Vhk \mid (\Divhk \bfz_h,z_h)_\Omega=0\text{ for all }z_h\in \Qhkc\}$. 

Well-posedness (\textit{i.e.}, solvability), stability (\textit{i.e.}, a priori
estimates), and (weak) convergence of solutions of Problem (\hyperlink{Qhldg}{Q$_h$}) and Problem
(\hyperlink{Phldg}{P$_h$}) to solutions of Problem (\hyperlink{Q}{Q}) and Problem~(\hyperlink{P}{P}), respectively, are proved in  \cite{kr-pnse-ldg-1}.

\subsection{Quasi-optimal a priori error estimate for the kinematic pressure of shear-thickening fluids}\label{sec:rates}

\hspace*{5mm}In order to derive a priori error for the kinematic pressure, it is
first necessary to derive a priori error estimates for the velocity
vector field. In this respect, we  summarize the relevant results of the recent
contributions  \cite{kr-pnse-ldg-2} and \cite{kr-pnse-ldg-3} in the following theorem: 
\begin{theorem}
	\label{thm:error_LDG}
	Let 
	$p\hspace*{-0.1em}\in\hspace*{-0.1em}(2,\infty)$, $\delta\hspace*{-0.1em}>\hspace*{-0.1em} 0$, $k\hspace*{-0.1em}\in\hspace*{-0.1em} \mathbb{N}$, $\alpha\hspace*{-0.1em}>\hspace*{-0.1em}0$, and 
	$\bff \hspace*{-0.1em}\in\hspace*{-0.1em} (L^{p'}(\Omega))^d$. Moreover,~let~${\bfF(\bfD \bfv) \hspace*{-0.1em}\in\hspace*{-0.1em} (W^{1,2}(\Omega))^{d\times d}}$.   Then,
	there exists a constant $c_0 >0$, depending only on the characteristics of
	$\SSS$, $\delta^{-1}$, $k$, 
	$\alpha^{-1}$, and $\omega_0$, such that if $\|\nabla\bfv\|_{2,\Omega}\le c_0$, then, it holds that
	\begin{align*}
		\|\bfF(\Dhk \bfv_h)-\bfF(\bfD
			\bfv)\|_{2,\Omega}^2 +
		m_{\phi_{\smash{\sssl}},h } (\bfv_h-\bfv)&\leq c_1\, h^2\, \|
			\nabla \bfF(\bfD \bfv) \|_{2,\Omega}^2+c_1\,\rho_{(\phi_{\abs{\bfD \bfv}})^*,\Omega}(h\nabla q)\,,\\
		\| q_h-q\|_{p',\Omega}^2&\leq c_2\, (h^2 +\rho_{(\phi_{\abs{\bfD \bfv}})^*,\Omega}(h\nabla q))\,,
	\end{align*}
	where $c_1>0$ depends only on the characteristics of
	$\SSS$, $\delta^{-1}$, $k$, 
	$\alpha^{-1}$, $\omega_0$, $\|\bfv\|_{\infty,\Omega}$,  and $c_0$, and $c_2>0$, in addition, on $\|\bfF(\bfD\bfv)\|_{1,2,\Omega}$, $\|\nabla q\|_{p',\Omega}$,
	and 
	$\vert \Omega\vert $.
\end{theorem}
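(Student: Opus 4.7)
The plan is to follow the standard discontinuous Galerkin error analysis strategy for $(p,\delta)$-structure problems, combining Galerkin orthogonality with the structural calculus encapsulated in Proposition \ref{lem:hammer} and Lemma \ref{lem:shift-change}. First, I would split the total velocity error by introducing a discretely divergence-preserving interpolant $\bfv_h^I \in \Vhk$ of $\bfv$ and an $L^2$-projection $\Pia q$ of the pressure, writing $\bfv_h - \bfv = (\bfv_h - \bfv_h^I) + (\bfv_h^I - \bfv)$. Interpolation/consistency parts are controlled via standard approximation estimates exploiting the hypothesis $\bfF(\bfD\bfv) \in (W^{1,2}(\Omega))^{d\times d}$, using the near-identity $\Dhk \bfv_h^I \sim \bfD\bfv$ up to liftings and the equivalences of \eqref{eq:hammera}.

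Second, for the discrete error $\bfe_h \coloneqq \bfv_h - \bfv_h^I \in \Vhk$, the plan is to test the LDG identity \eqref{eq:primal1} against $\bfe_h$ and subtract the continuous identity \eqref{eq:q1}. The monotonicity provided by \eqref{eq:hammera}, together with Remark \ref{rem:sa} applied to the jump penalty $\SSS_{\sssl}$, yields coercivity in the form
\begin{align*}
\|\bfF(\Dhk \bfv_h) - \bfF(\bfD\bfv)\|_{2,\Omega}^2 + m_{\phi_{\sssl},h}(\bfe_h) \;\lesssim\; \text{(consistency errors)} + \text{(pressure error)} + \text{(convective error)}\,.
\end{align*}
The pressure contribution $(q - \Pia q, \Divhk \bfe_h)_\Omega$ is estimated by the $\epsilon$-Young inequality \eqref{ineq:young} applied to the pair $((\phi_{|\bfD\bfv|})^*, \phi_{|\bfD\bfv|})$, producing precisely $\rho_{(\phi_{|\bfD\bfv|})^*,\Omega}(h\nabla q)$ after invoking $\|q - \Pia q\|\lesssim h\|\nabla q\|$ and using Lemma \ref{lem:shift-change} to align shifts with $\sssl$, so that the coercive piece in $\phi_{|\bfD\bfv|}$ can be absorbed. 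The convective remainder is controlled by Sobolev embedding and a smallness-absorption argument invoking $\|\nabla\bfv\|_{2,\Omega} \le c_0$; this is where uniqueness-type smallness is essential.

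Third, for the pressure estimate, the plan is to invoke the discrete inf--sup stability
\begin{align*}
\|z_h\|_{p',\Omega} \;\lesssim\; \sup_{\bfz_h \in \Vhk,\,\|\bfz_h\|_{\nabla,p,h}\leq 1}(z_h, \Divhk \bfz_h)_\Omega
\end{align*}
with $z_h = q_h - \Pia q$. Using \eqref{eq:primal1} to rewrite $(q_h,\Divhk \bfz_h)_\Omega$ in terms of stress differences, jump-stress terms, and convective terms, and then estimating these via Hölder's inequality with the pair $((\phi_{|\bfD\bfv|})^*,\phi_{|\bfD\bfv|})$ and the bound $\|\SSS(\bfA)-\SSS(\bfB)\|_{(\phi_{|\bfA^{\textup{sym}}|})^*}^2 \lesssim |\bfF(\bfA)-\bfF(\bfB)|^2$ from \eqref{eq:hammera}, one reduces to the right-hand side of the already-established velocity estimate. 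The triangle inequality with $\|\Pia q - q\|_{p',\Omega} \lesssim h\|\nabla q\|_{p',\Omega}$ completes the bound.

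The main obstacle I anticipate is the shift bookkeeping: the natural shift arising from the jump-penalty monotonicity is $\avg{|\Pia \Dhk \bfv_h|}$, whereas the target modular on the right-hand side uses the shift $|\bfD\bfv|$. Passing between these via the change-of-shift Lemma \ref{lem:shift-change}, while keeping the absorbing constants small enough so that the main coercive quantity $\|\bfF(\Dhk \bfv_h)-\bfF(\bfD\bfv)\|_{2,\Omega}^2$ remains on the left-hand side, is the technically delicate step and is the reason why the smallness condition $\|\nabla\bfv\|_{2,\Omega}\le c_0$ plays a role beyond merely controlling the convective term.
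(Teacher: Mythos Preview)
The paper does not prove Theorem~\ref{thm:error_LDG} at all: it is stated as a summary of results already established in \cite{kr-pnse-ldg-2,kr-pnse-ldg-3} (see the sentence immediately preceding the theorem), and no proof is given here. Your outline is a faithful high-level description of the strategy carried out in those references---the error equation is tested with a discrete error built from a discretely divergence-free interpolant, coercivity comes from Proposition~\ref{lem:hammer} and Remark~\ref{rem:sa}, shifts are reconciled via Lemma~\ref{lem:shift-change}, the convective remainder is absorbed under the smallness hypothesis $\|\nabla\bfv\|_{2,\Omega}\le c_0$, and the pressure bound follows from discrete inf--sup stability combined with the already-obtained velocity estimate.

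One small point of care in your sketch: for the pressure splitting you write $z_h = q_h - \Pia q$, but $\Pia q$ is element-wise constant and hence not in $\Qhkco$ (the discrete pressure space here is continuous), so the inf--sup lemma does not directly apply to this difference. In \cite{kr-pnse-ldg-3} this is handled by inserting a continuous interpolant of $q$ (or equivalently by first bounding $\|q_h - z_h\|_{p',\Omega}$ for $z_h\in\Qhkco$ and then optimizing over $z_h$), which is also the device used later in the present paper in the proof of Theorem~\ref{thm:error_pressure}. Apart from this bookkeeping detail, your plan matches the approach of the cited references.
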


An immediate consequence of Theorem \ref{thm:error_LDG} is the following result.
 
\begin{corollary}\label{cor:error_LDG}
	Let the assumptions of Theorem \ref{thm:error_LDG} be satisfied. Then,~it~holds that
	\begin{align*}
		\norm{\bfF(\Dhk \bfv_h) - \bfF(\bfD
			\bfv)}_2^2 +
		\,m_{\phi_{\smash{\sssl}},h } (\bfv_h-\bfv)&\leq  c_1\, h^2\, \|\nabla\bfF(\bfD \bfv)\|_{2,\Omega}^2+c_1\,h^{p'}\!\rho_{\phi^*,\Omega}(\nabla q)\,,\\
			\| q_h-q\|_{p',\Omega}^2&\leq c_2\, h^{p'}\,,
	\end{align*}
	where $c_1>0$ depends only on the characteristics of
	$\SSS$, $\delta^{-1}$, $k$, 
	$\alpha^{-1}$, $\omega_0$, $\|\bfv\|_{\infty,\Omega}$,  and $c_0$, and $c_2>0$, in addition, on $\|\bfF(\bfD\bfv)\|_{1,2,\Omega}$, $\|\nabla q\|_{p',\Omega}$,
	and 
	$\vert \Omega\vert $. 
	If, in addition, $\bff \in (L^{2}(\Omega))^d$,~then, it holds that
	\begin{align*}
		\|\bfF(\Dhk \bfv_h) - \bfF(\bfD
				\bfv)\|_{2,\Omega}^2 +m_{\phi_{\smash{\sssl}},h } (\bfv_h-\bfv)
	&\leq  c_1\, h^2 \smash{\big ( \|\nabla\bfF(\bfD \bfv)
			\|_{2,\Omega}^2+\|\nabla q\|_{2,\mu_{\bfD\bfv}^{-1},\Omega}^2
			\big )
		}\,,\\
		\| q_h-q\|_{p',\Omega}&\leq c_2\, h\,,
	\end{align*}
	where $\mu_{\bfD\bfv} \coloneqq (\delta+\vert\overline{\bfD\bfv}\vert )^{p-2}\colon \mathbb{R}^d\to (0,+\infty)$ and 
 	$c_1>0$ depends only on the characteristics of
	$\SSS$, $\delta^{-1}$, $k$, 
	$\alpha^{-1}$, $\omega_0$, $\|\bfv\|_{\infty,\Omega}$,  and $c_0$, and $c_2>0$, in addition, on $\|\bfF(\bfD\bfv)\|_{1,2,\Omega}$, $\|\nabla q\|_{p',\Omega}$, and $\|\nabla q\|_{2,\mu_{\bfD\bfv}^{-1},\Omega}$.
\end{corollary}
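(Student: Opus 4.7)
Both assertions follow directly from Theorem \ref{thm:error_LDG} by bounding the term $\rho_{(\phi_{\vert\bfD\bfv\vert})^*,\Omega}(h\nabla q)$ in two different ways, so there is no need to re-run any variational analysis. The two parts differ only in which elementary pointwise estimate for the conjugate shifted N-function $(\phi_a)^*$ one uses, both of which rely crucially on $p>2$.

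For the first part, the plan is first to observe that, since $p>2$, comparing the integral representations \eqref{eq:def_phi} and \eqref{eq:phi_shifted} gives $\phi_a(t)\ge \phi(t)$ pointwise for every $a\ge 0$, whence by Legendre--Fenchel conjugation $(\phi_a)^*(s)\le \phi^*(s)$ uniformly in $a$. It then remains to verify the scaling inequality
\begin{align*}
\phi^*(h\,s)\le c(p)\, h^{p'}\,\phi^*(s)\qquad\textup{for all } h\in(0,1],\; s\ge 0\,,
\end{align*}
by splitting at the transition point $s\sim \delta^{p-1}$ between the quadratic and $p'$-power growth regimes of $\phi^*$: the purely quadratic regime uses $h^2\le h^{p'}$ (since $p'<2$ and $h\le 1$), the $p'$-regime is homogeneous, and the mixed case $hs\le\delta^{p-1}\le s$ is handled by the algebraic identity $(p-1)(2-p')=p-2$. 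Integrating over $\Omega$ yields $\rho_{(\phi_{\vert\bfD\bfv\vert})^*,\Omega}(h\nabla q)\le c\, h^{p'}\,\rho_{\phi^*,\Omega}(\nabla q)$, which upon insertion into Theorem \ref{thm:error_LDG} delivers the velocity estimate. For the pressure bound, absorbing the additive $h^2$ into $h^{p'}$ and noting that $\rho_{\phi^*,\Omega}(\nabla q)\le c(p)\,\|\nabla q\|_{p',\Omega}^{p'}<\infty$ (finite by Lemma \ref{lem:pres}\,(i)) gives the claim.

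For the second part, the hypothesis $\bff\in(L^2(\Omega))^d$ triggers Lemma \ref{lem:pres}\,(ii), ensuring $\|\nabla q\|_{2,\mu_{\bfD\bfv}^{-1},\Omega}^2<\infty$. The elementary estimate now used is the quadratic lower bound $\phi_a(t)\ge c\,(\delta+a)^{p-2}\,t^2$, valid for $p>2$ directly from \eqref{eq:phi_shifted}; conjugation of the purely quadratic majorant then gives
\begin{align*}
(\phi_a)^*(s)\le c\,s^2\,(\delta+a)^{2-p}\qquad\textup{uniformly in } a,s\ge 0\,.
\end{align*}
Setting $a=\vert\bfD\bfv\vert$, $s=h\vert\nabla q\vert$ and integrating produces
\begin{align*}
\rho_{(\phi_{\vert\bfD\bfv\vert})^*,\Omega}(h\nabla q)\le c\,h^2\,\|\nabla q\|_{2,\mu_{\bfD\bfv}^{-1},\Omega}^2\,,
\end{align*}
and substitution into Theorem \ref{thm:error_LDG} immediately yields the sharper $h^2$-bound on the velocity error and the $\mathcal{O}(h)$-bound on the pressure error.

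The whole argument is mechanical once Theorem \ref{thm:error_LDG} is in place; the only step demanding a bit of care is the scaling inequality for $\phi^*$ in the first part, which is however a routine one-variable N-function calculation relying only on the explicit form of $\phi=\phi_{p,\delta}$.
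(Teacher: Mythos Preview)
Your argument is correct and matches the paper's approach: the paper simply states that Corollary~\ref{cor:error_LDG} is ``an immediate consequence of Theorem~\ref{thm:error_LDG}'', and the natural way to make this immediate is precisely what you do, namely bounding $\rho_{(\phi_{\vert\bfD\bfv\vert})^*,\Omega}(h\nabla q)$ via the two pointwise inequalities $(\phi_a)^*(s)\le \phi^*(s)$ and $(\phi_a)^*(s)\le c\,(\delta+a)^{2-p}s^2$ (both available since $p>2$) together with the scaling $\phi^*(hs)\le c\,h^{p'}\phi^*(s)$ for $h\le 1$. Your case analysis for the scaling inequality and the invocation of Lemma~\ref{lem:pres} for finiteness are exactly the details one fills in.
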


\begin{remark}
	\begin{itemize}[{(ii)}]
		\item[(i)] Note that in Theorem \ref{thm:error_LDG}, the assumption $\bff \in (L^{p'}(\Omega))^d$ is equivalent~to~${\nabla q\in (L^{p'}(\Omega))^d}$ and in Corollary \ref{cor:error_LDG},  the assumption $\bff \in (L^2(\Omega))^d$ can be replaced by $ \nabla q\in (L^2(\Omega;\mu_{\bfD\bfv}^{-1}))^d$.
		
		\item[(ii)] To show that $\bfF(\bfD \bfv) \in
		(W^{1,2}(\Omega) )^{d\times d}$ holds under reasonable assumptions is still an open problem
		(\textit{cf.}~\cite{hugo-petr-rose} for partial results). However, this
		regularity is natural for elliptic problems of $p$-Laplace type
		(\textit{cf.}~\cite{gia-mod-86,giu1}) and proved in the two-dimensional case
		(\textit{cf.}~\cite{KMS2}) and in any dimension in the space periodic setting,
		since it follows from interior regularity (\textit{cf.}~\cite{hugo-petr-rose}).
	\end{itemize} 
\end{remark}

\begin{remark}
	\begin{itemize}[{(ii)}]
		\item[(i)]   In \cite{kr-pnse-ldg-2}, numerical experiments confirmed the quasi-optimality of the a priori error estimates for the velocity vector field  in Corollary \ref{cor:error_LDG}.
	
	\item[(ii)]   In \cite{kr-pnse-ldg-3}, numerical experiments indicated the sub-optimality of the a priori error estimates for the kinematic pressure in Corollary \ref{cor:error_LDG} in two dimensions.
		\end{itemize} 
\end{remark}

Let us continue with the main result of this paper. It proves the
conjecture from \cite{kr-pnse-ldg-3} under a mild additional
assumption on the velocity vector field, \textit{i.e.}, the Muckenhoupt regularity condition \eqref{eq:reg-assumption}.\enlargethispage{7mm}

\begin{theorem}
	\label{thm:error_pressure}
	Let  
	$p\hspace*{-0.1em}\in\hspace*{-0.1em}(2,\infty)$, $\delta\hspace*{-0.1em}>\hspace*{-0.1em} 0$, $k\hspace*{-0.1em}\in\hspace*{-0.1em} \mathbb{N}$, $\alpha\hspace*{-0.1em}>\hspace*{-0.1em}0$, and 
	$\bff\hspace*{-0.1em} \in \hspace*{-0.1em}(L^{p'}(\Omega))^d$. Moreover, let 
	${\bfF(\bfD \bfv) \hspace*{-0.1em}\in\hspace*{-0.1em} (W^{1,2}(\Omega))^{d\times d}}$  and $\mu_{\bfD\bfv}\coloneqq (\delta+\vert \overline{\bfD\bfv}\vert)^{p-2}\in A_2(\mathbb{R}^d)$.
 	 Then,
	there exists a constant $c_0 >0$, depending only on the characteristics of
	$\SSS$, $\delta^{-1}$, 
	$k$, 
	$\alpha^{-1}$, and $\omega_0$, such that if $\|\nabla\bfv\|_{2,\Omega}\le c_0$, then, it holds that
	\begin{align*}
		\rho_{(\varphi_{\smash{\vert\bfD\bfv\vert}})^*,\Omega}(q_h-q)\leq c\,h^2 \,\|\nabla\bfF(\bfD\bfv)\|_{2,\Omega}^2+c\,\rho_{(\varphi_{\smash{\vert
					\bfD\bfv\vert}})^*,\Omega}(h\,\nabla
		q)
		+c\,\inf_{z_h\in \Qhkco}{\rho_{(\varphi_{\vert \bfD\bfv\vert})^*,\Omega}(q-z_h)}\,,%	\rho_{(\varphi_{\vert \bfD\bfv\vert})^*,\Omega}(h\,\nabla q)\,,
	\end{align*}
	where $c>0$ depends only on the characteristics of
	$\SSS$, $\delta^{-1}$,  $\omega_0$,
	$\alpha^{-1}$, $k$, $\|\bfv\|_{\infty,\Omega}$,
and $c_0$.
\end{theorem}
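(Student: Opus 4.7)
The idea is a duality argument based on the shifted discrete convex conjugation inequality \eqref{intro:discrete_convex_conjugation_ineq} (the new ingredient enabled by the Muckenhoupt condition \eqref{eq:reg-assumption}), applied to a Galerkin-type residual identity obtained by subtracting the continuous strong momentum equation from the LDG equation \eqref{eq:primal1}, and combined with the already known velocity error bounds from Theorem \ref{thm:error_LDG}. First, fix an arbitrary $z_h\in\Qhkco$ and use convexity together with $(\varphi_{\vert\bfD\bfv\vert})^*\in\Delta_2$ to write
\begin{align*}
\rho_{(\varphi_{\vert\bfD\bfv\vert})^*,\Omega}(q_h-q)\le c\,\rho_{(\varphi_{\vert\bfD\bfv\vert})^*,\Omega}(q_h-z_h)+c\,\rho_{(\varphi_{\vert\bfD\bfv\vert})^*,\Omega}(z_h-q).
\end{align*}
Minimizing over $z_h$ produces the infimum in the statement, so it remains to estimate $\rho_{(\varphi_{\vert\bfD\bfv\vert})^*,\Omega}(q_h-z_h)$.

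\textbf{Duality and residual identity.} Since $q_h-z_h\in\Qhkco$ has mean zero, the shifted discrete convex conjugation inequality \eqref{intro:discrete_convex_conjugation_ineq} gives
\begin{align*}
\rho_{(\varphi_{\vert\bfD\bfv\vert})^*,\Omega}(q_h-z_h)\le\sup_{\bfz_h\in \Vhk}\!\left[(q_h-z_h,\Divhk\bfz_h)_\Omega-\tfrac{1}{c}M_{\varphi_{\vert\bfD\bfv\vert},h}(\bfz_h)\right].
\end{align*}
To evaluate $(q_h-z_h,\Divhk\bfz_h)_\Omega$, I insert the LDG equation \eqref{eq:primal1} and the continuous strong equation $-\divo\SSS(\bfD\bfv)+[\nabla\bfv]\bfv+\nabla q=\bff$ tested against $\bfz_h\in \Vhk$, using element-wise integration by parts (exploiting $q\in W^{1,p'}(\Omega)$ by Lemma \ref{lem:pres} and continuity of $\SSS(\bfD\bfv)$, together with the lift $\setRhk$ to reconcile $\divo_h\bfz_h$ with $\Divhk\bfz_h$). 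This recasts $(q_h-z_h,\Divhk\bfz_h)_\Omega$ as the sum of (a) a stress error $(\SSS(\bfD\bfv)-\SSS(\Dhk\bfv_h),\Dhk\bfz_h)_\Omega$, (b) the jump-penalty contribution $-\alpha\langle\SSS_{\sss}(h^{-1}\jump{\bfv_h\otimes\bfn}),\jump{\bfz_h\otimes\bfn}\rangle_{\Gamma_h}$, (c) convective differences $\tfrac12(\bfv\otimes\bfv-\bfv_h\otimes\bfv_h,\Dhk\bfz_h)_\Omega$ and $\tfrac12([\nabla\bfv]\bfv-[\Ghk\bfv_h]\bfv_h,\bfz_h)_\Omega$, and (d) consistency terms coupling $h\nabla q$ and $q-z_h$ with $\jump{\bfz_h\cdot\bfn}$ via $\setRhk\bfz_h$ and $L^2$-projection onto $\Xhk$.

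\textbf{Term-by-term estimation.} Each contribution is treated by the $\epsilon$-Young inequality \eqref{ineq:young} in the shifted N-function $\varphi_{\vert\bfD\bfv\vert}$ on $\Omega$ (resp.~$\varphi_{\smash{\avg{\vert\Pia\Dhk\bfv_h\vert}}}$ on $\Gamma_h$): the ``$\varphi$-part'' is absorbed into $\tfrac{1}{c}M_{\varphi_{\vert\bfD\bfv\vert},h}(\bfz_h)$ using the equivalences \eqref{eq:eqiv0}--\eqref{eq:eqiv0.1} and the discrete Korn inequality \eqref{discrete_korn}, while the ``$(\varphi)^*$-part'' is converted, via Proposition \ref{lem:hammer} and Remark \ref{rem:sa}, into $\|\bfF(\Dhk\bfv_h)-\bfF(\bfD\bfv)\|_{2,\Omega}^2+m_{\varphi_{\sss},h}(\bfv_h-\bfv)$; any residual shift mismatch between $|\bfD\bfv|$, $|\Dhk\bfv_h|$, and $\sss$ is resolved by the change-of-shift Lemma \ref{lem:shift-change}, producing further $\bfF$-difference remainders of the same type. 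Invoking Theorem \ref{thm:error_LDG} absorbs these remainders into $c\,h^2\|\nabla\bfF(\bfD\bfv)\|_{2,\Omega}^2+c\,\rho_{(\varphi_{\vert\bfD\bfv\vert})^*,\Omega}(h\nabla q)$. The consistency terms in (d) contribute directly $\rho_{(\varphi_{\vert\bfD\bfv\vert})^*,\Omega}(h\nabla q)$ and the best-approximation infimum (via $\varphi_{\vert\bfD\bfv\vert}^*$-$\epsilon$-Young applied to $q-z_h$ paired with $h^{-1}\jump{\bfz_h\cdot\bfn}$), while the $\SSS(\bfD\bfv)$-face consistency yields the $h^2\|\nabla\bfF(\bfD\bfv)\|_{2,\Omega}^2$ contribution through standard approximation estimates for $\PiDG$.

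\textbf{Main obstacle.} The genuine difficulty is not in this macro-structure, which follows the usual duality pattern, but in the validity of the shifted discrete convex conjugation inequality \eqref{intro:discrete_convex_conjugation_ineq}: it rests on the stability of a discrete Bogovski\u{\i} operator with respect to the modular $\rho_{\varphi_{\vert\bfD\bfv\vert},\Omega}$, which at the continuous level reduces to Bogovski\u{\i} stability in the $\mu_{\bfD\bfv}$-weighted space $L^{p'}(\Omega;\mu_{\bfD\bfv}^{1-p'})$, and this is precisely where the $A_2$-assumption \eqref{eq:reg-assumption} enters through Muckenhoupt-weighted Calder\'on--Zygmund theory. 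A secondary bookkeeping challenge is to arrange the shifts so that every invocation of Lemma \ref{lem:shift-change} produces a remainder absorbable into the $\bfF$-difference bound of Theorem \ref{thm:error_LDG}, rather than into a quantity not already known to decay at the desired rate.
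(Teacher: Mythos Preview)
Your proposal is correct and follows essentially the same approach as the paper: split $q_h-q$ through an arbitrary $z_h\in\Qhkco$, apply the discrete convex conjugation inequality (Lemma~\ref{lem:discrete_convex_conjugation_ineq}) to $q_h-z_h$, rewrite $(q_h-q,\Divhk\bfz_h)_\Omega$ via the residual identity (which the paper cites as \cite[(4.6)]{kr-pnse-ldg-3}) into the six terms $I_h^1,\ldots,I_h^6$ matching your (a)--(d), and estimate each by the shifted $\epsilon$-Young inequality, Proposition~\ref{lem:hammer}, shift changes, and Theorem~\ref{thm:error_LDG}. Two minor remarks: the discrete convex conjugation inequality in the paper carries an additional remainder $c\,h^2\|\nabla\bfF(\bfD\bfv)\|_{2,\Omega}^2$ (arising from the transfer continuous~$\to$~discrete via $\PiDG$-stability, Lemma~\ref{lem:shifted_modular_Pi_div}, rather than a genuine discrete Bogovski\u{\i}), and the weighted space in which continuous Bogovski\u{\i} stability is needed is $L^2(\Omega;\mu_{\bfD\bfv})\cap L^p(\Omega)$ (via the equivalence $\varphi_{|\bfD\bfv|}(t)\sim\mu_{\bfD\bfv}\,t^2+t^p$), not $L^{p'}(\Omega;\mu_{\bfD\bfv}^{1-p'})$.
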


An immediate consequence of Theorem \ref{thm:error_pressure} is the following result.

\begin{corollary}\label{cor:error_pressure}
	Let the assumptions of Theorem \ref{thm:error_pressure} be satisfied. Then,~it~holds that
	\begin{align*}
		\rho_{(\varphi_{\smash{\vert\bfD\bfv\vert}})^*,\Omega}(q_h-q)\leq  c\, h^2\, \|\nabla\bfF(\bfD \bfv)
		\|_{2,\Omega}^2+c\,h^{p'}\!\rho_{\phi^*,\Omega}(\nabla q)\,,
	\end{align*}
	where $c>0$ is a constant depending only   on the characteristics of
	$\SSS$, $\delta^{-1}$,  $\omega_0$,
	$\alpha^{-1}$, $k$, $\|\bfv\|_{\infty,\Omega}$,
	and $c_0$. If, in addition, $\bff \in (L^{2}(\Omega))^d$,~then, it holds that
	\begin{align*}
		\rho_{(\varphi_{\smash{\vert\bfD\bfv\vert}})^*,\Omega}(q_h-q)
		\leq  c\, h^2 \,\big ( \|\nabla\bfF(\bfD \bfv)
			\|_{2,\Omega}^2+\|\nabla q\|_{2,\mu_{\bfD\bfv}^{-1},\Omega}^2
			\big )\,,
	\end{align*}
	where $c>0$ is a constant depending only   on the characteristics of
	$\SSS$, $\delta^{-1}$,  $\omega_0$,
	$\alpha^{-1}$, $k$, $\|\bfv\|_{\infty,\Omega}$,
	and $c_0$.
\end{corollary}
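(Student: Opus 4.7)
I plan to derive Corollary \ref{cor:error_pressure} directly from Theorem \ref{thm:error_pressure}. Since the term $c\,h^2\|\nabla\bfF(\bfD\bfv)\|_{2,\Omega}^2$ transfers verbatim into both claimed estimates, it remains to control the two $q$-dependent quantities
\[
	(A)\coloneqq\rho_{(\varphi_{\vert \bfD\bfv\vert})^*,\Omega}(h\,\nabla q)\,,\qquad (B)\coloneqq\inf_{z_h\in \Qhkco}\rho_{(\varphi_{\vert \bfD\bfv\vert})^*,\Omega}(q-z_h)\,,
\]
by the right-hand sides appearing in the corollary.

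\emph{Bound for $(A)$.} For $p>2$, a direct comparison of the defining formulas gives $\varphi'_a(t)=(\delta+a+t)^{p-2}\,t\ge (\delta+t)^{p-2}\,t=\varphi'(t)$ for every $a\ge 0$, so $\varphi_a\ge \varphi$ and, by Legendre duality, $(\varphi_a)^*\le \varphi^*$ pointwise. Moreover, since $\varphi^*$ has $(p',\delta^{p-1})$-structure with $p'\le 2$, a case split at the threshold $t=\delta^{p-1}$ yields the uniform scaling $\varphi^*(h\,t)\le C\,h^{p'}\,\varphi^*(t)$ for every $h\in (0,1]$ and $t\ge 0$. Combining these two facts gives $(A)\le C\,h^{p'}\rho_{\varphi^*,\Omega}(\nabla q)$, as needed for the first statement. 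For the second statement with $\bff\in (L^2(\Omega))^d$, I instead invoke the sharper pointwise bound $(\varphi_a)^*(t)\le C\,\mu_a^{-1}\,t^2$, valid for all $a,t\ge 0$: in the quadratic regime $t\le (\delta+a)^{p-1}$ this is the definition, and in the $p'$-regime $t\ge (\delta+a)^{p-1}$ it reduces to $t^{p'-2}\le (\delta+a)^{-(p-2)}$, which holds by the monotonicity of $s\mapsto s^{p'-2}$ and equality at the threshold. Integrating with $a=\vert\bfD\bfv\vert$ yields $(A)\le C\,h^2\,\|\nabla q\|_{2,\mu_{\bfD\bfv}^{-1},\Omega}^2$.

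\emph{Bound for $(B)$.} I would choose $z_h\in \Qhkco$ as a Scott--Zhang-type interpolant of $q$ in $\Qhkc$, subsequently corrected by a global constant so that $\mean{z_h}_\Omega=0$. Standard element-wise modular interpolation estimates, combined with the shift-change Lemma \ref{lem:shift-change} to pass between the $x$-dependent shift $\vert\bfD\bfv\vert$ and a piecewise constant (patch-averaged) shift, then produce $(B)\le C\,\rho_{(\varphi_{\vert \bfD\bfv\vert})^*,\Omega}(h\,\nabla q)=C\,(A)$. The shift-change excess terms are of the form $\int_\Omega |\bfF(\bfD\bfv)-\bfF(\mean{\bfD\bfv}_{\omega_K})|^2\,\mathrm{d}x$ and are controlled, after a further Poincaré-type estimate on each patch, by $h^2\|\nabla\bfF(\bfD\bfv)\|_{2,\Omega}^2$, which already features on the right-hand side of the corollary.

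Inserting the estimates for $(A)$ and $(B)$ into Theorem \ref{thm:error_pressure} yields both parts of the corollary. The main technical subtlety I anticipate is the careful bookkeeping in the interpolation step for $(B)$ under a genuinely $x$-dependent shift---but this is precisely the kind of argument for which Lemma \ref{lem:shift-change} was designed, so the overall proof is essentially a reformulation of Theorem \ref{thm:error_pressure} with the $q$-dependent quantities rewritten in the desired form.
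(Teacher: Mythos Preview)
Your proposal is correct, and for the term $(A)$ you do exactly what the paper (implicitly) does. For the best-approximation term $(B)$, however, you take a genuinely different route from the paper. The paper invokes its Lemma~\ref{lem:shifted_Q_approx}, whose proof first replaces the shifted conjugate $(\varphi_{|\bfD\bfv|})^*$ by $\varphi^*$ (respectively by $\mu_{\bfD\bfv}^{-1}t^2$) \emph{globally} via the pointwise bounds you also use for $(A)$, and then appeals to the boundedness of the Hardy--Littlewood maximal operator in $L^{\varphi^*}(\mathbb{R}^d)$ (respectively in the weighted space $L^2(\mathbb{R}^d;\mu_{\bfD\bfv}^{-1})$) together with the weighted interpolation estimate Lemma~\ref{lem:PiYstab_muckenhoupt}. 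In the second case this \emph{reuses} the Muckenhoupt hypothesis $\mu_{\bfD\bfv}\in A_2(\mathbb{R}^d)$ twice more. Your shift-change approach---freeze the shift on each patch, apply a standard constant-shift Orlicz interpolation estimate, shift back, and absorb the $|\bfF(\bfD\bfv)-\bfF(\langle\bfD\bfv\rangle_{\omega_K})|^2$ excess into $h^2\|\nabla\bfF(\bfD\bfv)\|_{2,\Omega}^2$ via Poincar\'e---avoids both the maximal operator and the weighted interpolation lemma, and in particular does not reuse the $A_2$ condition for this step. That is a cleaner and more self-contained argument for $(B)$.

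One point you gloss over: the global constant you subtract to enforce $\langle z_h\rangle_\Omega=0$ contributes an additional term $\rho_{(\varphi_{|\bfD\bfv|})^*,\Omega}(\langle q-\Pi_h^Q q\rangle_\Omega)$ that your element-wise shift-change argument does not cover. This is easily fixed---bound the constant by $c\,h\,\|\nabla q\|_{1,\Omega}$ via an $L^1$ interpolation estimate, then use $(\varphi_a)^*(t)\le c\,\varphi^*(t)$ (case~1) or $(\varphi_a)^*(t)\le c\,\mu_a^{-1}t^2$ (case~2) together with Jensen/Cauchy--Schwarz to reach the right-hand side---but you should state it explicitly, since the paper devotes the bulk of the proof of Lemma~\ref{lem:shifted_Q_approx} to precisely this mean-correction term.
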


The key ingredient in the proof of Theorem \ref{thm:error_pressure} is the following discrete convex conjugation inequality.

%\comment{why A in $W^{1,1}$?}
\begin{lemma}[Discrete convex conjugation inequality]\label{lem:discrete_convex_conjugation_ineq}
	Let $p\in [2,\infty)$ and $\delta\ge  0$. Moreover, let $\bfA\in (L^p(\Omega))^{d\times d}\cap (W^{1,1}(\Omega))^{d\times d}$ with 
	$\mu_\bfA \coloneqq (\delta+\vert\overline{\bfA}\vert)^{p-2}\in A_2(\mathbb{R}^d)$ and $\bfF(\bfA)\in (W^{1,2}(\Omega))^{d\times d}$. Then, there exists a constant $c>0$, depending on $k$, $\omega_0$, $p$, $\delta$, $\Omega$, and $[\mu_\bfA]_{A_2(\mathbb{R}^d)}$,~such~that~for~every~${z_h \in \Qhkco}$,~it~holds that 
	\begin{align*}
		\rho_{(\varphi_{\smash{\vert\bfA\vert}})^*,\Omega}(z_h )\leq \sup_{\bfz_h\in \Vhk}{[(z_h ,\Divhk\,\bfz_h)_\Omega-\smash{\tfrac{1}{c}}\,M_{\varphi_{\smash{\vert\bfA\vert}},h}(\bfz_h)]}+c\,h^2\,\|\nabla \bfF(\bfA)\|_{2,\Omega}^2\,.\\[-8mm]
	\end{align*}
\end{lemma}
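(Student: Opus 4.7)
The plan is to construct an explicit near-optimizer $\bfz_h\in \Vhk$ of the supremum on the right-hand side by discretizing a Bogovski\u{\i}-type solution of $\divo\bfz=\tilde w$, where $\tilde w$ is essentially the Fenchel dual density of $z_h$. Concretely, given $z_h\in \Qhkco$, set
\begin{align*}
	w\coloneqq ((\varphi_{\vert\bfA\vert})^*)'(\vert z_h\vert)\,\mathrm{sign}(z_h)\in L^1(\Omega)\,.
\end{align*}
By the Fenchel--Young equality and \eqref{eq:phi*phi'}, one has pointwise $\vert z_h\vert\,\vert w\vert\sim (\varphi_{\vert\bfA\vert})^*(\vert z_h\vert)\sim \varphi_{\vert\bfA\vert}(\vert w\vert)$. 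Correcting the mean, $\tilde w\coloneqq w-\mean{w}_\Omega$, I then invoke the Bogovski\u{\i} operator in the shifted-Orlicz setting to obtain $\bfz\in (W^{1,1}_0(\Omega))^d$ with $\divo\bfz=\tilde w$ and
\begin{align*}
	\rho_{\varphi_{\vert\bfA\vert},\Omega}(\nabla\bfz)\leq c\,\rho_{\varphi_{\vert\bfA\vert},\Omega}(\tilde w)\leq c\,\rho_{(\varphi_{\vert\bfA\vert})^*,\Omega}(z_h)\,.
\end{align*}
This is precisely the step in which the Muckenhoupt assumption $\mu_\bfA\in A_2(\mathbb{R}^d)$ is needed.

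\textbf{Commuting identity.} I define the test function $\bfz_h\coloneqq \PiDG\bfz\in \Vhk$. Element-wise integration by parts, together with the continuity of $z_h\in\Qhkco$ (which kills the interior jump contributions) and the defining relation \eqref{eq:2.25.1} applied with $\bfX_h=z_h\mathbf{I}_d\in X_h^k$ (which cancels the lifting contribution with the remaining boundary term), yields
\begin{align*}
	(z_h,\Divhk\bfz_h)_\Omega=-(\nabla_h z_h,\bfz_h)_\Omega\,.
\end{align*}
Since $\nabla_h z_h|_K\in (\mathbb{P}_{k-1}(K))^d\subset (\mathbb{P}_k(K))^d$ for every $K\in\pazocal{T}_h$, the $L^2$-projection property gives $(\nabla_h z_h,\bfz_h)_\Omega=(\nabla_h z_h,\bfz)_\Omega$. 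A further integration by parts on $\Omega$, using $\bfz=\bfzero$ on $\partial\Omega$ and $\mean{z_h}_\Omega=0$, then shows
\begin{align*}
	(z_h,\Divhk\bfz_h)_\Omega=(z_h,\tilde w)_\Omega=(z_h,w)_\Omega=\int_\Omega \vert z_h\vert\,\vert w\vert\,\mathrm{d}x\ge c\,\rho_{(\varphi_{\vert\bfA\vert})^*,\Omega}(z_h)\,.
\end{align*}

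\textbf{Modular control of $\bfz_h$.} The remaining task is to bound
\begin{align*}
	M_{\varphi_{\vert\bfA\vert},h}(\bfz_h)\leq c\,\rho_{\varphi_{\vert\bfA\vert},\Omega}(\nabla\bfz)+c\,h^2\,\|\nabla\bfF(\bfA)\|_{2,\Omega}^2\,.
\end{align*}
On each $K\in\pazocal{T}_h$ I replace the pointwise shift $\vert\bfA\vert$ by its element average $\langle\vert\bfA\vert\rangle_K$, apply the standard modular stability of $\PiDG$ for an N-function with constant shift (controlling both the volumetric modular and, via a scaled trace inequality, the face-penalty contribution $m_{\varphi_{\vert\bfA\vert},h}(\bfz_h)$), and pay for the change of shift by Lemma \ref{lem:shift-change}. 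By Proposition \ref{lem:hammer}, the resulting error is comparable to $\int_K \vert\bfF(\bfA)-\bfF(\langle\bfA\rangle_K)\vert^2\,\mathrm{d}x$, which is bounded by $c\,h^2\|\nabla\bfF(\bfA)\|_{2,K}^2$ via a local Poincar\'e inequality. Summing over $K$, combining with the previous two steps, choosing the constant in front of $\tfrac{1}{c}M_{\varphi_{\vert\bfA\vert},h}(\bfz_h)$ large enough to absorb the $c\,\rho_{(\varphi_{\vert\bfA\vert})^*,\Omega}(z_h)$ contribution of $M_{\varphi_{\vert\bfA\vert},h}(\bfz_h)$ on the right-hand side, and testing the supremum with our concrete $\bfz_h$ completes the proof.

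\textbf{Main obstacle.} The only genuinely non-standard ingredient is the modular Bogovski\u{\i} bound in the first paragraph, where the shift $\vert\bfA(\cdot)\vert$ depends on the spatial variable. Classical Orlicz--Bogovski\u{\i} theorems do not cover this case; the assumption $\mu_\bfA\in A_2(\mathbb{R}^d)$ is what makes the estimate accessible, by reducing it (in the relevant small-argument regime) to the weighted $L^2$-stability of Bogovski\u{\i} with $A_2$-weight $\mu_\bfA$.
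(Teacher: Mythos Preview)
Your proposal is correct and follows essentially the same route as the paper. The paper organizes the argument into two auxiliary lemmas---a continuous convex conjugation inequality (proved via abstract Fenchel duality plus the shifted-modular Bogovski\u{\i} estimate, which is exactly your ``main obstacle'') and a modular stability estimate for $\PiDG$---and then combines them via the commuting identity $(z_h,\Divhk\PiDG\bfz)_\Omega=(z_h,\divo\bfz)_\Omega$, which you derive by hand. Your explicit construction of the near-optimizer $w=((\varphi_{\vert\bfA\vert})^*)'(\vert z_h\vert)\,\mathrm{sign}(z_h)$ just makes the Fenchel step concrete rather than citing it abstractly. One small point: your claim $\rho_{\varphi_{\vert\bfA\vert},\Omega}(\tilde w)\le c\,\rho_{\varphi_{\vert\bfA\vert},\Omega}(w)$ after the mean correction is not automatic when the shift is $x$-dependent, but it follows from the equivalence $\varphi_{\vert\bfA(x)\vert}(t)\sim \mu_\bfA(x)\,t^2+t^p$ together with the $A_2$-bound on $\langle\mu_\bfA\rangle_\Omega\langle\mu_\bfA^{-1}\rangle_\Omega$; the paper sidesteps this by taking the supremum in the Fenchel formula directly over mean-zero test functions.
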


The proof of Lemma \ref{lem:discrete_convex_conjugation_ineq} is based on two key ingredients.
The first key ingredient is the following  continuous counterpart.

\begin{lemma}[Convex conjugation inequality]\label{lem:convex_conjugation_ineq}
	Let   $p\in [2,\infty)$ and $\delta\ge 0$. Moreover, let $ \bfA\in (L^p(\Omega))^{d\times d}$ with 
	$\mu_\bfA \coloneqq (\delta+\vert\overline{\bfA}\vert)^{p-2}\in A_2(\mathbb{R}^d)$. Then, there exists a constant $c>0$, depending on $p$, $\Omega$, and $[\mu_\bfA]_{A_2(\mathbb{R}^d)}$, such that for every $z\in \Qo$, it holds that 
	\begin{align*}
		\rho_{(\varphi_{\smash{\vert\bfA\vert}})^*,\Omega}(z)\leq \sup_{\bfz\in \Vo}{[(z,\textup{div}\,\bfz)_\Omega-\smash{\tfrac{1}{c}}\,\rho_{\varphi_{\smash{\vert\bfA\vert}},\Omega}(\nabla \bfz)]}\,.\\[-8mm]
	\end{align*}
\end{lemma}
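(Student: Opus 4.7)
The strategy is to construct, for an arbitrary $z\in\Qo$, an essentially optimal test function $\bfz\in\Vo$ using the Bogovski\u{\i} operator applied to the Young-dual of $z$. First, I would set
\begin{align*}
	w \coloneqq (\varphi_{\vert\bfA\vert})^{*\prime}(\vert z\vert)\,\textup{sgn}(z) - \bigmean{(\varphi_{\vert\bfA\vert})^{*\prime}(\vert z\vert)\,\textup{sgn}(z)}_\Omega \in L_0^p(\Omega)\,.
\end{align*}
The pointwise Young equality $s\,((\varphi_a)^*)'(s) = (\varphi_a)^*(s) + \varphi_a(((\varphi_a)^*)'(s))$ combined with $\mean{z}_\Omega=0$ then yields the identity
\begin{align*}
	(z,w)_\Omega = \rho_{(\varphi_{\vert\bfA\vert})^*,\Omega}(z) + \rho_{\varphi_{\vert\bfA\vert},\Omega}\bigl((\varphi_{\vert\bfA\vert})^{*\prime}(\vert z\vert)\bigr)\,.
\end{align*}

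The core step is to exhibit $\bfz\in\Vo$ with $\textup{div}\,\bfz = w$ a.e.\ in $\Omega$ satisfying the shifted Orlicz--Bogovski\u{\i} estimate
\begin{align*}
	\rho_{\varphi_{\vert\bfA\vert},\Omega}(\nabla\bfz) \leq C\,\rho_{\varphi_{\vert\bfA\vert},\Omega}(w)\,,
\end{align*}
with $C>0$ depending only on $p$, $\Omega$, and $[\mu_\bfA]_{A_2(\mathbb{R}^d)}$. Taking $\bfz\coloneqq\mathcal{B}w$ where $\mathcal{B}$ is the classical Bogovski\u{\i} operator, inserting this estimate, and choosing $c\ge C$ in the right-hand side of the claimed inequality gives
\begin{align*}
	(z,\textup{div}\,\bfz)_\Omega - \tfrac{1}{c}\,\rho_{\varphi_{\vert\bfA\vert},\Omega}(\nabla\bfz) \geq \rho_{(\varphi_{\vert\bfA\vert})^*,\Omega}(z)\,,
\end{align*}
from which the lemma follows since the left-hand side is admissible in the supremum.

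To prove the shifted Bogovski\u{\i} estimate, I would exploit that for $p\ge 2$ the shifted N-function admits the uniform pointwise equivalence
\begin{align*}
	\varphi_a(t) \sim (\delta+a)^{p-2}\,t^2 + t^p\,,\qquad a,t\ge 0\,,
\end{align*}
which yields $\rho_{\varphi_{\vert\bfA\vert},\Omega}(f) \sim \|f\|_{2,\mu_\bfA,\Omega}^2 + \|f\|_{p,\Omega}^p$. The problem then decouples into the classical unweighted bound $\|\nabla\mathcal{B}w\|_{p,\Omega}\le c\,\|w\|_{p,\Omega}$ and a weighted bound $\|\nabla\mathcal{B}w\|_{2,\mu_\bfA,\Omega}\le c\,\|w\|_{2,\mu_\bfA,\Omega}$. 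The latter follows from the Calder\'on--Zygmund singular-integral representation of $\mathcal{B}$ (Galdi) together with the classical boundedness of Calder\'on--Zygmund operators on $L^2(\mathbb{R}^d;\mu)$ for $\mu\in A_2(\mathbb{R}^d)$, with quantitative dependence on $[\mu]_{A_2(\mathbb{R}^d)}$.

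The main obstacle is precisely this weighted $L^2$-estimate for $\nabla\mathcal{B}$ on the bounded Lipschitz domain $\Omega$ with \emph{explicit} dependence on $[\mu_\bfA]_{A_2(\mathbb{R}^d)}$. While the unweighted result and the $A_2$-boundedness of Calder\'on--Zygmund operators on $\mathbb{R}^d$ are classical, their transfer to $\mathcal{B}$ on $\Omega$ requires decomposing $\Omega$ into a finite union of star-shaped Lipschitz pieces, localizing via smooth cut-offs, and controlling the $A_2$-constant of the extension of $\mu_\bfA$ throughout the decomposition. A minor technical point is to verify that subtracting the mean from the Young-dual preserves the modular $\rho_{\varphi_{\vert\bfA\vert},\Omega}(w)$ up to a constant, which is a routine consequence of the $\Delta_2$-property of $\varphi_{\vert\bfA\vert}$ and Jensen's inequality.
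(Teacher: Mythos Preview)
Your proposal is correct and hinges on the same key ingredient as the paper's proof: the shifted Bogovski\u{\i} modular estimate $\rho_{\varphi_{\vert\bfA\vert},\Omega}(\nabla\mathcal{B}\tilde z)\le c\,\rho_{\varphi_{\vert\bfA\vert},\Omega}(\tilde z)$, which both you and the paper obtain from the pointwise equivalence $\varphi_{\vert\bfA\vert}(t)\sim \mu_\bfA\,t^2+t^p$ together with the unweighted $L^p$-- and the $A_2$--weighted $L^2$--boundedness of $\mathcal{B}$ (the paper simply quotes \cite{john} for the latter, which is the Calder\'on--Zygmund argument you sketch). The difference is only in the framing: you construct an explicit near-maximizer $w=(\varphi_{\vert\bfA\vert})^{*\prime}(\vert z\vert)\,\textup{sgn}(z)-\mean{\cdot}_\Omega$ and then set $\bfz=\mathcal{B}w$, whereas the paper uses the abstract Fenchel identity $\rho_{(\varphi_{\vert\bfA\vert})^*,\Omega}(z)=\sup_{\tilde z\in L^{\varphi_{\vert\bfA\vert}}_0(\Omega)}\big[(z,\tilde z)_\Omega-\rho_{\varphi_{\vert\bfA\vert},\Omega}(\tilde z)\big]$ and substitutes $\tilde z=\textup{div}\,\mathcal{B}\tilde z$ inside the supremum. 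The paper's route is slightly cleaner because it sidesteps your mean-subtraction step entirely.

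One correction: the mean-subtraction estimate $\rho_{\varphi_{\vert\bfA\vert},\Omega}(w)\lesssim\rho_{\varphi_{\vert\bfA\vert},\Omega}\big((\varphi_{\vert\bfA\vert})^{*\prime}(\vert z\vert)\big)$ is \emph{not} a routine consequence of $\Delta_2$ and Jensen alone, because the shift $\vert\bfA(x)\vert$ is $x$-dependent while the subtracted constant is not. Via the decomposition $\varphi_{\vert\bfA\vert}(t)\sim\mu_\bfA\,t^2+t^p$, the $L^p$-part follows from Jensen, but the weighted $L^2$-part requires the bound $\langle\mu_\bfA\rangle_\Omega\langle\mu_\bfA^{-1}\rangle_\Omega\le c(\Omega)\,[\mu_\bfA]_{A_2(\mathbb{R}^d)}$, i.e.\ another use of the Muckenhoupt hypothesis. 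This is easy to supply, so your argument still closes.
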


The second key ingredient is the following stability result for the (local) $L^2$-projection operators $\{\Pi_h^k\}_{h>0}$ in terms of the shifted modular $\rho_{\varphi_{\smash{\vert \bfA\vert}},\Omega}$.\enlargethispage{6mm}

	\begin{lemma}[Shifted modular estimate for $\Pi_h^k$]\label{lem:shifted_modular_Pi_div}
Let  $p\in (1,\infty)$ and $\delta\ge  0$.  
Then, there exists a constant $c>0$, depending on $k$, $\omega_0$,
$p$, and $\Omega$,  
such that for every $\bfz\in \Vo$ and every $\bfA \in
(L^{p}(\Omega))^{d\times d}\cap (W^{1,1}(\Omega))^{d\times d}$ with $\bfF(\bfA) \in (W^{1,2}(\Omega) )^{d\times d}$, it holds that
\begin{align*}
	M_{\varphi_{\smash{\vert\bfA\vert}},h}( \PiDG \bfz)\leq c\, \rho_{\varphi_{\smash{\vert\bfA\vert}},\Omega}(\nabla\bfz)+c\,h^2\,\|\nabla \bfF(\bfA)\|_{2,\Omega}^2\,.\\[-8mm]
\end{align*}
\end{lemma}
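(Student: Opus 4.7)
The strategy is an element-wise reduction to constant shifts. For each $K\in\pazocal{T}_h$, we choose a representative constant $\bar{\bfA}_K\in\setR^{d\times d}_{\textup{sym}}$ of $\bfA|_K$; a convenient choice is $\bar{\bfA}_K$ defined by $\bfF(\bar{\bfA}_K)=\mean{\bfF(\bfA)}_K$, which is admissible because $\bfF$ is a bijection on $\setR^{d\times d}_{\textup{sym}}$ under the $(p,\delta)$-structure. Applying Lemma \ref{lem:shift-change}, Eq.~\eqref{lem:shift-change.1} pointwise on $K$ yields, for any $\epsilon>0$,
\begin{align*}
\varphi_{\vert\bfA(x)\vert}(t)\leq c_\epsilon\,\varphi_{\vert\bar{\bfA}_K\vert}(t)+\epsilon\,\vert\bfF(\bfA(x))-\bfF(\bar{\bfA}_K)\vert^{2},
\end{align*}
and similarly with the roles of $\bfA(x)$ and $\bar{\bfA}_K$ interchanged. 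The pointwise remainder is controlled by a local Poincar\'e-type inequality, $\int_K\vert\bfF(\bfA)-\mean{\bfF(\bfA)}_K\vert^{2}\,\mathrm{d}x\le c\,h_K^{2}\int_K\vert\nabla\bfF(\bfA)\vert^{2}\,\mathrm{d}x$, summing to $c\,h^{2}\|\nabla\bfF(\bfA)\|_{2,\Omega}^{2}$. Hence it suffices to prove the claimed bound with the piecewise constant shift $a_K:=\vert\bar{\bfA}_K\vert$ on each $K$.

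For the volume part, on each~$K$ we invoke local stability of the $L^{2}$-projection $\PiDG|_K$ in the shifted Orlicz modular with \emph{constant} shift $a_K$: since $\{\varphi_a\}_{a\ge 0}$ and $\{(\varphi_a)^*\}_{a\ge 0}$ satisfy the $\Delta_{2}$-condition uniformly in $a\ge 0$ (with constants depending only on $p$, $\delta$), a standard reference-element argument (equivalence of norms on the finite-dimensional polynomial space, transferred by affine scaling and chunkiness) yields
\begin{align*}
\int_K\varphi_{a_K}(\vert\nabla\PiDG\bfz\vert)\,\mathrm{d}x\leq c\int_K\varphi_{a_K}(\vert\nabla\bfz\vert)\,\mathrm{d}x,
\end{align*}
with $c=c(k,\omega_0,p,\delta)$. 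Changing shifts back to $\varphi_{\vert\bfA\vert}$ via \eqref{lem:shift-change.1} and summing over $K\in\pazocal{T}_h$ gives
\begin{align*}
\rho_{\varphi_{\vert\bfA\vert},\Omega}(\nabla_h\PiDG\bfz)\leq c\,\rho_{\varphi_{\vert\bfA\vert},\Omega}(\nabla\bfz)+c\,h^{2}\|\nabla\bfF(\bfA)\|_{2,\Omega}^{2}.
\end{align*}

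For the jump part we exploit $\bfz\in\Vo$: its trace is single-valued across every interior face and vanishes on $\partial\Omega$, so $\jump{\PiDG\bfz\otimes\bfn}_\gamma=\jump{(\PiDG\bfz-\bfz)\otimes\bfn}_\gamma$ for every $\gamma\in\Gamma_h$. Combining the discrete trace inequality for polynomials in shifted Orlicz norms (valid by $\Delta_{2}$ and affine scaling) with the local shifted Orlicz approximation $\|\PiDG\bfz-\bfz\|_{\varphi_{a_K},K}\le c\,h_K\|\nabla\bfz\|_{\varphi_{a_K},K}$ on each element adjacent to $\gamma$, and then applying the same change-of-shift and summation procedure as in the volume step, we obtain
\begin{align*}
m_{\varphi_{\vert\bfA\vert},h}(\PiDG\bfz)\leq c\,\rho_{\varphi_{\vert\bfA\vert},\Omega}(\nabla\bfz)+c\,h^{2}\|\nabla\bfF(\bfA)\|_{2,\Omega}^{2}.
\end{align*}
Adding the two contributions yields the claim.

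The main obstacle I anticipate is the local shifted Orlicz stability and approximation of $\PiDG$ with constants independent of the constant shift $a_K\ge 0$; once the reduction to piecewise constant shifts is made, this is a reference-element / scaling argument, but one must verify it goes through uniformly in $a$, which ultimately rests on the uniform $\Delta_{2}$-property of $\{\varphi_a\}_{a\ge 0}$. A secondary subtlety is to choose $\bar{\bfA}_K$ so that the change-of-shift remainder is genuinely controlled by $h_K^{2}\int_K\vert\nabla\bfF(\bfA)\vert^{2}\,\mathrm{d}x$; matching $\bfF(\bar{\bfA}_K)$ to $\mean{\bfF(\bfA)}_K$ achieves this cleanly, while the small mismatch between $a_{K^+}$ and $a_{K^-}$ across an interior face is absorbed by one more change of shift thanks to shape regularity.
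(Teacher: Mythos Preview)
Your proposal is correct and follows essentially the same route as the paper: element-wise change of shift to a constant, local Orlicz stability/approximation of $\PiDG$ with constant shift, change of shift back, and Poincar\'e on $\bfF(\bfA)$; for the jump term you use exactly the paper's observation that $\jump{\PiDG\bfz\otimes\bfn}=\jump{(\PiDG\bfz-\bfz)\otimes\bfn}$ since $\bfz\in\Vo$. The only cosmetic difference is that the paper takes $\bar{\bfA}_K=\mean{\bfA}_K$ and then invokes an auxiliary equivalence $\|\bfF(\bfA)-\bfF(\mean{\bfA}_K)\|_{2,K}\sim\|\bfF(\bfA)-\mean{\bfF(\bfA)}_K\|_{2,K}$ before Poincar\'e, whereas your choice $\bfF(\bar{\bfA}_K)=\mean{\bfF(\bfA)}_K$ makes that step immediate.
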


%\comment{why A in $W^{1,1}$? not needed}
\begin{proof}
The assertion follows using \cite[Prop.~4.10 (4.22)]{kr-pnse-ldg-2}, that $m_{\varphi_{\smash{\vert\bfA\vert}},h}( \PiDG \bfz)=
m_{\varphi_{\smash{\vert\bfA\vert}},h}( \PiDG \bfz-\bfz)$ since ${\bfz \in \Vo}$, 
and that  
for every $K\in \mathcal{T}_h$, it holds that
\begin{align*}
	\rho_{ \varphi_{\smash{\vert\bfA\vert}},K}(\nabla \PiDG \bfz)&\leq c\,\rho_{ \varphi_{\smash{\vert\mean{\bfA}_K\vert}},K}(\nabla \PiDG \bfz) 
	+ c\,\|\bfF(\bfA)-\bfF(\mean{\bfA}_K)\|_{2,K}^2
  \\
  &\le c\,\rho_{ \varphi_{\smash{\vert\mean{\bfA}_K\vert}},K}(\nabla  \bfz)  + c\,\|\bfF(\bfA)-\bfF(\mean{\bfA}_K)\|_{2,K}^2
  \\
  &\le 	c\,\rho_{ \varphi_{\smash{\vert\bfA\vert}},K}(\nabla \bfz) + c\,\|\bfF(\bfA)-\bfF(\mean{\bfA}_K)\|_{2,K}^2
  \\
  &\le 	c\,\rho_{ \varphi_{\smash{\vert\bfA\vert}},K}(\nabla \bfz)  + c\,h^2\,\|
   \nabla \bfF(\bfA)\|_{2,K}^2\,,
\end{align*}
where we used twice Lemma \ref{lem:shift-change},
\cite[Lem.~A.4]{bdr-phi-stokes}, and Poincar\'e's inequality.
\end{proof}
 
To prove
Lemma \ref{lem:convex_conjugation_ineq}, we  need the
following stability result  for the Bogovski\u{\i} operator in terms
of the shifted modular~$\rho_{\varphi_{\smash{\vert \bfA\vert}},\Omega}$. 

\begin{lemma}[Shifted modular estimate for Bogovski\u{\i}'s operator]\label{lem:key}
	Let $p\in [2,\infty)$ and $\delta\ge 0$. Moreover, let $\bfA\in (L^p(\Omega))^{d\times d}$ with  $\mu_\bfA \coloneqq (\delta+\vert\overline{\bfA}\vert)^{p-2}\in A_2(\mathbb{R}^d)$. 
	Then, the Bogovski\u{\i}'s~operator~$\mathcal{B}_{\textup{Bog}}\colon C^\infty_{0,0}(\Omega)\to (C^\infty_0(\Omega))^d$  uniquely extends to a continuous operator from $L^{\varphi_{\smash{\vert\bfA\vert}}}_0(\Omega)$ to $(W^{1,\smash{\varphi_{\smash{\vert\bfA\vert}}}}_0(\Omega))^d$. In particular,  there exists a constant $c>0$, depending on $p$, $\Omega$, and $[\mu_\bfA]_{A_2(\mathbb{R}^d)}$, such that for every $z\in L^{\varphi_{\smash{\vert\bfA\vert}}}_0(\Omega)$, it holds that
	\begin{align*}
		\rho_{\varphi_{\smash{\vert\bfA\vert}},\Omega}(\nabla\mathcal{B}_{\textup{Bog}}z)\leq c\,\rho_{\varphi_{\smash{\vert\bfA\vert}},\Omega}(z)\,.\\[-7mm]
	\end{align*}
\end{lemma}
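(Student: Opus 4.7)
The strategy I would pursue is to decompose the shifted modular $\rho_{\varphi_{\vert\bfA\vert},\Omega}$ into a weighted $L^2$-piece and an unweighted $L^p$-piece, and then invoke two classical continuity properties of Bogovski\u{\i}'s operator. The starting point is the pointwise equivalence of N-functions: for $p\ge 2$ and $\delta\ge 0$, the elementary estimate $(\delta+\vert\bfA(x)\vert+t)^{p-2}\sim (\delta+\vert\bfA(x)\vert)^{p-2}+t^{p-2}$ combined with the definition \eqref{eq:phi_shifted} yields
\begin{align*}
\varphi_{\vert\bfA(x)\vert}(t)\,\sim\,(\delta+\vert\bfA(x)\vert+t)^{p-2}\,t^2\,\sim\,\mu_{\bfA}(x)\,t^2+t^p\,,
\end{align*}
with constants depending only on $p$. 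Integrating over $\Omega$ gives the fundamental equivalence $\rho_{\varphi_{\vert\bfA\vert},\Omega}(f)\sim \|f\|_{2,\mu_{\bfA},\Omega}^2+\|f\|_{p,\Omega}^p$, so that the lemma reduces to controlling $\nabla\mathcal{B}_{\textup{Bog}}z$ simultaneously in weighted $L^2$ with weight $\mu_{\bfA}$ and in unweighted $L^p$.

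The proof is then completed by two classical properties of $\mathcal{B}_{\textup{Bog}}$ on the bounded Lipschitz (John) domain $\Omega$: the unweighted bound $\|\nabla\mathcal{B}_{\textup{Bog}}z\|_{p,\Omega}\leq c\,\|z\|_{p,\Omega}$, and, for every $w\in A_2(\mathbb{R}^d)$, the weighted bound $\|\nabla\mathcal{B}_{\textup{Bog}}z\|_{2,w,\Omega}\leq c([w]_{A_2(\mathbb{R}^d)})\,\|z\|_{2,w,\Omega}$. The latter follows from the John-domain decomposition of Diening--Ru\v{z}i\v{c}ka--Schumacher: after this decomposition, $\nabla\mathcal{B}_{\textup{Bog}}$ is (up to a harmless remainder) a singular integral of Calder\'on--Zygmund type, and such operators are bounded on $L^2(w)$ for every $A_2$-weight $w$ with a constant controlled solely by $[w]_{A_2(\mathbb{R}^d)}$. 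Applied with $w=\mu_{\bfA}$, the combination yields, for every $z\in C^\infty_{0,0}(\Omega)$,
\begin{align*}
\rho_{\varphi_{\vert\bfA\vert},\Omega}(\nabla\mathcal{B}_{\textup{Bog}}z)\,\sim\,\|\nabla\mathcal{B}_{\textup{Bog}}z\|_{2,\mu_{\bfA},\Omega}^2+\|\nabla\mathcal{B}_{\textup{Bog}}z\|_{p,\Omega}^p\,\leq\,c\big(\|z\|_{2,\mu_{\bfA},\Omega}^2+\|z\|_{p,\Omega}^p\big)\,\sim\, c\,\rho_{\varphi_{\vert\bfA\vert},\Omega}(z)\,.
\end{align*}
A density argument, relying on the uniform $\Delta_2$-property of the family $\{\varphi_a\}_{a\ge 0}$ and the fact that $C^\infty_{0,0}(\Omega)$ is dense in $L^{\varphi_{\vert\bfA\vert}}_0(\Omega)$, then delivers the claimed unique continuous extension.

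The main obstacle is the weighted $L^2$-estimate with a constant depending only on $[\mu_{\bfA}]_{A_2(\mathbb{R}^d)}$. Since $\mathcal{B}_{\textup{Bog}}$ is originally defined on the bounded domain $\Omega$, deducing the $A_2$-weighted bound requires either reducing to star-shaped sub-domains (where $\mathcal{B}_{\textup{Bog}}$ admits an explicit singular kernel) or extending the problem to all of $\mathbb{R}^d$; one must then verify that the finite covering and the cut-off constants do not spoil the explicit dependence of the constant on $[\mu_{\bfA}]_{A_2(\mathbb{R}^d)}$, which is precisely what the decomposition technique of Diening--Ru\v{z}i\v{c}ka--Schumacher accomplishes.
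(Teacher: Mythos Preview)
Your proposal is correct and follows essentially the same route as the paper: both decompose the shifted modular via the pointwise equivalence $\varphi_{\vert\bfA(x)\vert}(t)\sim \mu_{\bfA}(x)\,t^2+t^p$ (valid for $p\ge 2$), then invoke the unweighted $L^p$-boundedness and the $A_2$-weighted $L^2$-boundedness of $\mathcal{B}_{\textup{Bog}}$, the latter via the Diening--R\r{u}\v{z}i\v{c}ka--Schumacher decomposition (which the paper cites as \cite[Thm.~5.2]{john}). The only cosmetic difference is that the paper absorbs your density argument and your discussion of the Calder\'on--Zygmund structure directly into that citation.
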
\newpage

\begin{proof}
	Due to \cite[Thm. 5.2]{john}, $\mathcal{B}_{\textup{Bog}}\colon C^\infty_{0,0}(\Omega)\to (C^\infty_0(\Omega))^d$ uniquely extends to a continuous operator from $L^p_0(\Omega)$ to $\smash{(W^{1,p}_0(\Omega))^d} $ 
	and, owing to $\mu_\bfA\in A_2(\mathbb{R}^d)$, also to a continuous operator from $L^2_0(\Omega;\mu_\bfA)$ to $(W^{1,2}_0(\Omega;\mu_\bfA ))^d$. In other words, there exist constants $c_1>0$, depending only on $\Omega$ and $p$, and $c_2>0$, depending only on $\Omega$ and the $[\mu_\bfA]_{A_2(\mathbb{R}^d)}$,
	such that for every $z_1\in L^p_0(\Omega)$~and~${z_2\in L^2_0(\Omega;\mu_\bfA)}$, it holds that
	\begin{align}
		\|\nabla\mathcal{B}_{\textup{Bog}}z_1\|_{p,\Omega}&\leq c_1\, \|z_1\|_{p,\Omega}\,,\label{lem:key.1}\\
		\|\nabla\mathcal{B}_{\textup{Bog}}z_2\|_{2,\mu_\bfA,\Omega}&\leq c_2\, \|z_2\|_{2,\mu_\bfA,\Omega}\,.\label{lem:key.2}
	\end{align}
	Due to $p\ge 2$, for every $t\ge 0$ and a.e.\ $x\in \Omega$, it holds that
	\begin{align}
		\begin{aligned}
		\smash{(\delta + \abs{\bfA(x)}+ t)^{p-2} t^2\leq 2^{p-2}\,( \mu_\bfA(x) \,t^2 +\, t^p)\leq 2^{p-2}\, (\delta + \abs{\bfA(x)} + t)^{p-2} t^2\,,}
	\end{aligned}\label{lem:key.3}
	\end{align}
	which, appealing to $\varphi_{\smash{\vert\bfA(x)\vert}}(t)\sim (\delta + \abs{\bfA(x)}+ t)^{p-2} t^2$ uniformly with respect to all $t\ge 0$ and a.e.\ $x\in \Omega$,
	proves that $L^p_0(\Omega)\cap L^2_0(\Omega;\mu_\bfA )=L^{\varphi_{\smash{\vert\bfA\vert}}}_0(\Omega)$. 
	Eventually,  combining  \eqref{lem:key.1}--\eqref{lem:key.3},
	for~every~$z\in L^{\varphi_{\smash{\vert\bfA\vert}}}_0(\Omega)$, 
	we conclude that
	\begin{align*}
		\rho_{\varphi_{\smash{\vert\bfA\vert}},\Omega}(\nabla\mathcal{B}_{\textup{Bog}}z)
		&\leq c\,\big(\|\nabla\mathcal{B}_{\textup{Bog}}z\|_{2,\mu_\bfA,\Omega}^2+\|\nabla\mathcal{B}_{\textup{Bog}}z\|_{p,\Omega}^p\big)
		\\&\leq c\,\big(c_2^2\|z\|_{2,\mu_\bfA,\Omega}^2+c_1^p\|z\|_{p,\Omega}^p\big)
		\\&
		\leq c\,\rho_{\varphi_{\smash{\vert\bfA\vert}},\Omega}(z)\,,
	\end{align*}
	which is the shifted modular estimate for the Bogovski\u{\i} operator.\enlargethispage{10mm}
\end{proof}

\begin{remark}\label{rem:phi_equals_p}
	Due to $p\ge 2$ and $\delta>0$, for every $t\ge 0$ and a.e.\ $x\in \Omega$, it holds that
	\begin{align*}
	\smash{\mu_\bfA(x)^{p-2} t^2\leq  (\delta + \abs{\bfA(x)} + t)^{p-2} t^2\leq \tfrac{p-2}{p} (\delta + \abs{\bfA\bfv(x)} + t)^p +\tfrac{2}{p}\,t^p \,,}
	\end{align*}
	which proves the embedding $L^p_0(\Omega)\hookrightarrow L^2_0(\Omega;\mu_\bfA )$. Thus, if $p\ge 2$ and $\delta>0$, in actual fact, we have that $L^{\varphi_{\smash{\vert\bfA\vert}}}_0(\Omega)=L^p_0(\Omega)\cap L^2_0(\Omega;\mu_\bfA )=L^p_0(\Omega)$, which, by duality, implies that $\smash{L^{(\varphi_{\smash{\vert\bfA\vert}})^*}_0(\Omega)=L^{p'}_0(\Omega)}$.
\end{remark}

Having Lemma \ref{lem:key} at hand, we are in the position to prove Lemma \ref{lem:convex_conjugation_ineq}.

\begin{proof}[Proof (of Lemma \ref{lem:convex_conjugation_ineq}).]
	Inasmuch as $\smash{((x,t)^\top\mapsto \varphi_{\smash{\vert\bfA(x)\vert}}(t))\colon \Omega\times \mathbb{R}_{\ge 0}\to \mathbb{R}_{\ge 0}}$ is a generalized N-function, resorting to   \cite[Prop.~IV.1.2]{ET99} as well as taking into account Remark \ref{rem:phi_equals_p},  for every $\mu\in \smash{L^{(\varphi_{\smash{\vert\bfA\vert}})^*}_0(\Omega)}=\Qo$, the following convex conjugation formula applies:
	\begin{align}\label{lem:convex_conjugation_ineq.1}
		\rho_{(\varphi_{\smash{\vert\bfA\vert}})^*,\Omega}(z)= \sup_{\tilde{z}\in L^{\varphi_{\smash{\vert\bfA\vert}}}_0(\Omega)}{[(z,\tilde{z})_\Omega- \rho_{\varphi_{\smash{\vert\bfA\vert}},\Omega}(\tilde{z})]}\,.
	\end{align}
	Therefore, since $\textup{div}\,\mathcal{B}_{\textup{Bog}}\tilde{z}=\tilde{z}$ for all $\tilde{z}\in \smash{L^{\varphi_{\smash{\vert\bfA\vert}}}_0(\Omega)}$, combining  \eqref{lem:convex_conjugation_ineq.1} and Lemma \ref{lem:key}, we conclude that
	\begin{align*}
		\rho_{(\varphi_{\smash{\vert\bfA\vert}})^*,\Omega}(z)&= \sup_{\tilde{z}\in L^{\varphi_{\smash{\vert\bfA\vert}}}_0(\Omega)}{\big[(z,\tilde{z})_\Omega- \rho_{\varphi_{\smash{\vert\bfA\vert}},\Omega}(\tilde{z})\big]}\\&=\sup_{\tilde{z}\in L^{\varphi_{\smash{\vert\bfA\vert}}}_0(\Omega)}{\big[(z,\textup{div}\,\mathcal{B}_{\textup{Bog}}\tilde{z})_\Omega- \rho_{\varphi_{\smash{\vert\bfA\vert}},\Omega}(\tilde{z})\big]}
		\\&\leq
		\sup_{\tilde{z}\in L^{\varphi_{\smash{\vert\bfA\vert}}}_0(\Omega)}{\big[(z,\textup{div}\,\mathcal{B}_{\textup{Bog}}\tilde{z})_\Omega-\smash{\tfrac{1}{c}}\,\rho_{\varphi_{\smash{\vert\bfA\vert}},\Omega}(\nabla\mathcal{B}_{\textup{Bog}}\tilde{z})\big]}
		\\&\leq \sup_{\bfz\in (W^{1,\varphi_{\smash{\vert\bfA\vert}}}_0(\Omega))^d}{[(z,\textup{div}\,\bfz)_\Omega- \smash{\tfrac{1}{c}}\,\rho_{\varphi_{\smash{\vert\bfA\vert}},\Omega}(\nabla\bfz)]}\,,
	\end{align*}
	which, due to $\smash{(W^{1,\varphi_{\smash{\vert\bfA\vert}}}_0(\Omega))^d=\Vo}$, is the claimed convex conjugation inequality.
\end{proof}

Eventually, having Lemma \ref{lem:shifted_modular_Pi_div} and Lemma \ref{lem:convex_conjugation_ineq} at hand, we are in the position to prove Lemma \ref{lem:discrete_convex_conjugation_ineq}.

\begin{proof}[Proof (of Lemma \ref{lem:discrete_convex_conjugation_ineq}).]  
	Using Lemma \ref{lem:convex_conjugation_ineq}, that $(z_h
        ,\Divhk\PiDG\bfz)_\Omega=(z_h ,\divo\bfz)_\Omega$ for all
        $\bfz\in \Vo$, $z_h \in \Qhkco$ (cf.~\cite[(2.31)]{kr-pnse-ldg-1}),
	and Lemma \ref{lem:shifted_modular_Pi_div}, we find that
	\begin{align*}
		\rho_{(\varphi_{\smash{\vert\bfA\vert}})^*,\Omega}(z_h )&\leq \sup_{\bfz\in \Vo}{\big[(z_h ,\textup{div}\,\bfz)_\Omega-\smash{\tfrac{1}{c}}\,\rho_{\varphi_{\smash{\vert\bfA\vert}},\Omega}(\nabla \bfz)\big]}
		\\&= \sup_{\bfz\in \Vo}{[(z_h ,\Divhk\PiDG\bfz)_\Omega-\smash{\tfrac{1}{c}}\,\rho_{\varphi_{\smash{\vert\bfA\vert}},\Omega}(\nabla \bfz)]}
		\\&\leq \sup_{\bfz\in \Vo}{[(z_h ,\Divhk\PiDG\bfz)_\Omega-\smash{\tfrac{1}{c}}\,M_{\varphi_{\smash{\vert\bfA\vert}},h}(\PiDG\bfz)]}+c\,h^2\,\|\nabla \bfF(\bfA)\|_{2,\Omega}^2
		\\&\leq \sup_{\bfz_h\in \Vhk}{[(z_h ,\Divhk\bfz_h)_\Omega-\smash{\tfrac{1}{c}}\,M_{\varphi_{\smash{\vert\bfA\vert}},h}(\bfz_h)]}+c\,h^2\,\|\nabla \bfF(\bfA)\|_{2,\Omega}^2\,,
	\end{align*}
	which is the claimed discrete convex conjugation formula.
\end{proof}

Before we can move on to the proof of Theorem \ref{thm:error_pressure}, we need to prove the following stability property result for the lifting operators $\{\pazobfcal{R}_h^k\}_{h>0}$ in terms of the shifted modular $\rho_{(\phi_{\smash{\vert\bfA\vert }})^*,\Omega}$.

%\comment{why A in $W^{1,1}$?}

\begin{lemma}\label{lem:shifted_R_stability}
	Let   $p\in [2,\infty)$ and $\delta\ge 0$. Moreover, 
	let $\bfA\in (L^p(\Omega))^{d\times d}\cap (W^{1,1}(\Omega))^{d\times d}$ with $\bfF(\bfA)\in (W^{1,2}(\Omega))^{d\times d}$. Then, there exists a constant $c>0$, depending on $k$, $\omega_0$, $p$, and $\delta$, such that for every $\gamma \in \Gamma_h$
        and $\bfw_h\in (W^{1,p}(\omega_\gamma))^d$, it holds that
	\begin{align}\label{lem:shifted_R_stability.1}
	\rho_{\varphi_{\smash{\vert\bfA\vert}},\omega_\gamma}(\pazobfcal{R}_h^k\bfw_h)\leq c\, \rho_{\phi_{\vert\bfA\vert},\gamma} (h_\gamma^{-1}\jump{\bfw_h\otimes
			\bfn})+c\, h_\gamma^2\,\|\nabla\bfF(\bfA)\|_{2,\omega_\gamma}^2\,.
	\end{align}
	In particular, there exists a constant $c>0$, depending on $k$, $\omega_0$, $p$, and $\delta$, such that for every ${\bfw_h\in (W^{1,p}(\mathcal{T}_h))^d}$, it holds that
	\begin{align}	\label{lem:shifted_R_stability.2}
	\rho_{\varphi_{\smash{\vert\bfA\vert}},\Omega}(\pazobfcal{R}_h^k\bfw_h)\leq c\,M_{\varphi_{\smash{\vert\bfA\vert}},h}(\bfw_h)+c\,h^2\,\|\nabla \bfF(\bfA)\|_{2,\Omega}^2\,.
	\end{align}
\end{lemma}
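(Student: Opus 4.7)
The plan is to localize the estimate to a single face $\gamma\in\Gamma_h$ via a constant-shift reduction and then, for the global statement, to sum over all faces. Setting $\bar{\bfA}_\gamma\coloneqq \langle\bfA\rangle_{\omega_\gamma}$, the strategy for \eqref{lem:shifted_R_stability.1} mirrors the proof of Lemma~\ref{lem:shifted_modular_Pi_div}: replace the pointwise shift $\vert\bfA\vert$ by the constant shift $\vert\bar{\bfA}_\gamma\vert$ on both the volume $\omega_\gamma$ and the face $\gamma$ via Lemma~\ref{lem:shift-change}, invoke a stability estimate for $\setRhk$ with constant shift in between, and absorb the change-of-shift errors using the $W^{1,2}$-regularity of $\bfF(\bfA)$.

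Concretely, two applications of Lemma~\ref{lem:shift-change}, \eqref{lem:shift-change.1} -- one pointwise on $\omega_\gamma$ with $t=\vert\setRhk\bfw_h\vert$, and one pointwise on $\gamma$ with $t=h_\gamma^{-1}\vert\jump{\bfw_h\otimes\bfn}\vert$ -- produce change-of-shift errors of the form $\|\bfF(\bfA)-\bfF(\bar{\bfA}_\gamma)\|_{2,\omega_\gamma}^2$ and $\|\bfF(\bfA)-\bfF(\bar{\bfA}_\gamma)\|_{2,\gamma}^2$. The former is bounded by $c\,h_\gamma^2\|\nabla\bfF(\bfA)\|_{2,\omega_\gamma}^2$ via the Poincar\'e-type inequality of \cite[Lem.~A.4]{bdr-phi-stokes}, and the latter by $c\,h_\gamma\|\nabla\bfF(\bfA)\|_{2,\omega_\gamma}^2$ by combining the scaled trace inequality with the same Poincar\'e bound (noting that $\nabla\bfF(\bar{\bfA}_\gamma)=0$). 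In between, one uses the \emph{constant-shift} stability of the lifting operator, which -- via the equivalence $\varphi_a(t)\sim a^{p-2}t^2+t^p$ together with $\Delta_2$ -- reduces to the standard $L^p$ and weighted $L^2$ estimates and yields, by a Riesz-representation plus scaled-trace argument, the sharp bound
\begin{align*}
 \rho_{\varphi_{\vert\bar{\bfA}_\gamma\vert},\omega_\gamma}(\setRhk\bfw_h) \leq c\,h_\gamma\,\rho_{\varphi_{\vert\bar{\bfA}_\gamma\vert},\gamma}(h_\gamma^{-1}\jump{\bfw_h\otimes\bfn}).
\end{align*}
The extra $h_\gamma$-prefactor from this stability step multiplies the face change-of-shift error to produce the desired $h_\gamma^2$-scaling, and exploiting $h_\gamma\leq 1$ in the leading term yields \eqref{lem:shifted_R_stability.1}.

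For the global estimate \eqref{lem:shifted_R_stability.2}, I would decompose $\setRhk\bfw_h$ as a sum of face-local liftings $L_\gamma\bfw_h$, each supported in $\omega_\gamma$ and depending only on $\jump{\bfw_h\otimes\bfn}\vert_\gamma$. Since at most $d+1$ such pieces contribute to any simplex, convexity of $\varphi_{\vert\bfA\vert}$ together with $\Delta_2$ gives $\rho_{\varphi_{\vert\bfA\vert},\Omega}(\setRhk\bfw_h)\leq c\sum_\gamma \rho_{\varphi_{\vert\bfA\vert},\omega_\gamma}(L_\gamma\bfw_h)$. Summing the \emph{sharp} form of the local estimate (retaining the $h_\gamma$-prefactor) and using shape regularity $h_\gamma\sim h$ together with the finite overlap of the patches converts the face-sum into $c\,h\,\rho_{\varphi_{\vert\bfA\vert},\Gamma_h}(h^{-1}\jump{\bfw_h\otimes\bfn}) = c\,m_{\varphi_{\vert\bfA\vert},h}(\bfw_h)\leq c\,M_{\varphi_{\vert\bfA\vert},h}(\bfw_h)$, while the cumulative error term becomes $c\,h^2\|\nabla\bfF(\bfA)\|_{2,\Omega}^2$.

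The main technical obstacle is establishing the constant-shift lifting stability with the sharp $h_\gamma$-prefactor: because $\varphi_a$ is not homogeneous, no single scaling argument applies, and the proof naturally splits into the $L^p$ regime ($t\gtrsim a$) and the weighted $L^2$ regime ($t\lesssim a$), each requiring its own scaled-trace/Riesz-representation estimate and a $\Delta_2$-based recombination. The two change-of-shift invocations and the Poincar\'e/trace bookkeeping are then routine, as is the passage to the global estimate by summation.
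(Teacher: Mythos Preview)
Your proposal is correct and follows essentially the same route as the paper: shift change on $\omega_\gamma$ to the constant $\langle\bfA\rangle_{\omega_\gamma}$, constant-shift lifting stability with the sharp $h_\gamma$-prefactor, shift change back on $\gamma$, and trace/Poincar\'e bookkeeping via \cite[Lem.~A.4]{bdr-phi-stokes}; the global bound is then obtained by summing the face-local liftings over $\Gamma_h$ using finite overlap. The one unnecessary detour is your proposed $L^p$/weighted-$L^2$ splitting to derive the constant-shift stability of $\setRhk$: the paper simply invokes \cite[Lem.~A.1]{kr-phi-ldg}, which already gives the Orlicz-modular bound $\rho_{\psi,\omega_\gamma}(\pazobfcal{R}^k_{h,\gamma}\bfw_h)\le c\,h_\gamma\,\rho_{\psi,\gamma}(h_\gamma^{-1}\jump{\bfw_h\otimes\bfn})$ for any fixed N-function $\psi$ with $\Delta_2$, so no case distinction is needed.
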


\begin{proof}
	\textit{ad \eqref{lem:shifted_R_stability.1}.}  The shift change 
	\eqref{lem:shift-change.1}, the (local) stability properties of
	$\pazobfcal{R}^k_{h,\gamma}$ (\textit{cf.}\ \cite[Lem.~A.1]{kr-phi-ldg}),
	again, the shift change 
	\eqref{lem:shift-change.1}, \cite[Lem.~A.4]{bdr-phi-stokes}, and Poincar\'e's inequality yield that 
	\begin{align}
		\label{eq:e4.3}     
		\begin{aligned}
			\rho_{\phi_{\vert{\bfA\vert }},\omega_\gamma} ( \pazobfcal{R}^k_{h,\gamma}\bfw_h)
			&\leq c\,\rho_{\phi_{\vert{\langle \bfA\rangle_{\omega_\gamma}\vert }},\omega_\gamma} ( \pazobfcal{R}^k_{h,\gamma}\bfw_h)+c\,\|\bfF(\bfA)-\bfF(\langle \bfA\rangle_{\omega_\gamma})\|_{2,\omega_\gamma}^2
			\\&\leq c\, h_\gamma\,\rho_{\phi_{\vert{\langle \bfA\rangle_{\omega_\gamma}\vert }},\gamma} (h_\gamma^{-1}\jump{\bfw_h\otimes
				\bfn})+c\,\|\bfF(\bfA)-\bfF(\langle\bfA\rangle_{\omega_\gamma})\|_{2,\omega_\gamma}^2
			\\&\leq c\, h_\gamma\,\rho_{\phi_{\vert\bfA\vert},\gamma} (h_\gamma^{-1}\jump{\bfw_h\otimes
				\bfn})+c\,\|\bfF(\bfA)-\langle\bfF( \bfA)\rangle_{\omega_\gamma}\|_{2,\omega_\gamma}^2
			\\&\leq c\, h_\gamma\,\rho_{\phi_{\vert\bfA\vert},\gamma} (h_\gamma^{-1}\jump{\bfw_h\otimes
				\bfn})+c\,h_\gamma^2\,\|\nabla\bfF(\bfA)\|_{2,\omega_\gamma}^2\,,
		\end{aligned}
	\end{align}
	which is the claimed local stability estimate
        \eqref{lem:shifted_R_stability.1} for the lifting operator $
        \pazobfcal{R}^k_{h,\gamma}$ in terms of the shifted modular $\rho_{\varphi_{\smash{\vert \bfA\vert}},\omega_\gamma}$.
	
	\textit{ad \eqref{lem:shifted_R_stability.2}.} 	Owing to
	$\pazobfcal{R}_h^k\bfz_h =\sum_{\gamma \in
		\Gamma_h}\pazobfcal{R}^k_{h,\gamma}{\bfz_h}$,
	$\textup{supp}\,\pazobfcal{R}^k_{h,\gamma}{\bfz_h}\subseteq
	\omega_\gamma$ for all
	$\gamma\in \Gamma_h$,
	$\Omega =\bigcup_{\gamma\in
		\Gamma_h}{\omega_\gamma}$, and
	that for each $\gamma\in\Gamma_h $, $\omega_\gamma$ consist of at
	most two elements, via summation with respect to
        $\gamma\in\Gamma_h$ from \eqref{lem:shifted_R_stability.1}, it
        follows the claimed global stability estimate
        \eqref{lem:shifted_R_stability.2} for the lifting operator $
        \pazobfcal{R}^k_{h,\gamma}$ in terms of the shifted modular $\rho_{\varphi_{\smash{\vert \bfA\vert}},\Omega}$.
\end{proof}

Eventually, we give a proof of a global stability result for a locally $L^1$-stable projection operator in terms of the shifted modular $\rho_{\varphi_{\vert \bfD\bfv\vert},\Omega}$.

\begin{lemma}\label{lem:shifted_Q_approx} Let $p\in [2,\infty)$, $\delta\ge 0$, and $k\in \mathbb{N}_0$. Moreover, let  $\bfA\in (L^p(\Omega))^{d\times d}$ with  $\mu_\bfA \coloneqq (\delta+\vert\overline{\bfA}\vert)^{p-2}\in A_2(\mathbb{R}^d)$ and $\bfF(\bfA)\in (W^{1,2}(\Omega))^{d\times d}$.
	If 	$\Pi_h^Q\colon Q \to \Qhk$ is a linear projection operator, which is \textup{locally $L^1$-stable}, \textit{i.e.}, for every $z\in Q$ and $K\in \mathcal{T}_h$, it holds that
	\begin{align}
		\label{eq:PiYstab}
		\langle \vert\Pi_h^Q z\vert\rangle_K  \lesssim  \langle\vert z\vert\rangle_{\omega_K}\,,
	\end{align}
	where $\omega_K\coloneqq \bigcup\{K'\in \mathcal{T}_h\mid K'\cap K\neq \emptyset\}$.
	Then, there exists a constant $c>0$, depending only on $p$, $k$, $\Omega$, $\omega_0$, and $[\mu_\bfA]_{A_2(\mathbb{R}^d)}$, such that for every $z\in \Qo$, it holds that
	\begin{align*}
		\rho_{(\phi_{\smash{\vert\bfA\vert }})^*,\Omega}(z-\Pi_h^Qz-\langle \Pi_h^Q z\rangle_\Omega)\leq \begin{cases}
			c\,h^{p'}\rho_{\phi^*,\Omega}(\nabla z)&\text{ if }z\in W^{1,p'}(\Omega)\,,\\
			c\,h^2\,\|\nabla z\|_{\smash{\mu_\bfA^{-1}},2,\Omega}^2&\text{ if } \nabla z\in (L^2(\Omega;\smash{\mu_\bfA^{-1}}))^d\,.
		\end{cases}
	\end{align*}
	In particular, we have that
	\begin{align*}
		\inf_{z_h\in \Qo^k_h}{	\rho_{(\phi_{\smash{\vert\bfA\vert }})^*,\Omega}(z-z_h)}\leq \begin{cases}
			c\,h^{p'}\rho_{\phi^*,\Omega}(\nabla z)&\text{ if }z\in W^{1,p'}(\Omega)\,,\\
			c\,h^2\,\|\nabla z\|_{\smash{\mu_\bfA^{-1}},2,\Omega}^2&\text{ if }  \nabla z\in (L^2(\Omega;\smash{\mu_\bfA^{-1}}))^d\,.
		\end{cases}
	\end{align*}
\end{lemma}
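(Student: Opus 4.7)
The plan is to use the explicit competitor $z_h \coloneqq \Pi_h^Q z - \langle \Pi_h^Q z\rangle_\Omega \in \mathring{Q}_h^k$, for which $z - z_h = (z - \Pi_h^Q z) + c_h$ with $c_h \coloneqq \langle \Pi_h^Q z\rangle_\Omega = \langle \Pi_h^Q z - z\rangle_\Omega$ (using $\langle z\rangle_\Omega = 0$), and to split the modular $\rho_{(\phi_{\vert\bfA\vert})^*,\Omega}(z-z_h)$ via the $\Delta_2$-property of $(\phi_{\vert\bfA\vert})^*$ into the approximation part and the constant. The crucial observation is the pair of pointwise conjugate inequalities
\begin{align*}
(\phi_a)^*(s) \leq \phi^*(s), \qquad (\phi_a)^*(s) \leq C\,\frac{s^2}{\mu_a}, \qquad a,s\ge 0,
\end{align*}
both valid for $p\geq 2$: the former from $\phi'_a(t) = (\delta+a+t)^{p-2}t \ge \phi'(t)$ and hence $\phi_a\geq \phi$; the latter from $\phi_a(t) \geq c\,\mu_a\, t^2$, since $(\delta+a+t)^{p-2} \geq (\delta+a)^{p-2} = \mu_a$ for $p\geq 2$. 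These bounds decouple the two cases of the lemma.

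For Case~1 ($z \in W^{1,p'}(\Omega)$), I would pass to $\phi^*$ via the first pointwise bound and invoke the standard Orlicz Jackson-type estimate for the locally $L^1$-stable projection $\Pi_h^Q$ (cf.~\cite{dr-nafsa}),
\begin{align*}
\rho_{\phi^*,\Omega}(z - \Pi_h^Q z) \leq C\,\rho_{\phi^*,\Omega}(h\,\nabla z)\,.
\end{align*}
Since $\phi^*$ has lower Orlicz exponent $p'$, so that $\phi^*(ht)\leq h^{p'}\phi^*(t)$ for $h\leq 1$, the right-hand side is bounded by $C h^{p'}\rho_{\phi^*,\Omega}(\nabla z)$. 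The constant $c_h$ is absorbed via Jensen's inequality for the convex function $\phi^*$, which yields $\phi^*(|c_h|)\leq \langle \phi^*(|z - \Pi_h^Q z|)\rangle_\Omega$ and hence $\rho_{\phi^*,\Omega}(c_h) \leq \rho_{\phi^*,\Omega}(z - \Pi_h^Q z)$, which is already bounded.

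For Case~2 ($\nabla z \in L^2(\Omega;\mu_\bfA^{-1})$), the second pointwise bound reduces the estimate to the weighted $L^2$-norm $\|\cdot\|_{2,\mu_\bfA^{-1},\Omega}^2$. Since $\mu_\bfA \in A_2(\mathbb{R}^d)$ iff $\mu_\bfA^{-1} \in A_2(\mathbb{R}^d)$, the classical weighted Bramble--Hilbert estimate for $\Pi_h^Q$ (combining pointwise control by the Hardy--Littlewood maximal function, its $L^2(\mu)$-boundedness for $\mu \in A_2$, and weighted Poincar\'e on patches) gives
\begin{align*}
\|z - \Pi_h^Q z\|_{2,\mu_\bfA^{-1},\Omega}^2 \leq C h^2\,\|\nabla z\|_{2,\mu_\bfA^{-1},\Omega}^2\,,
\end{align*}
with $C$ depending only on $[\mu_\bfA]_{A_2(\mathbb{R}^d)}$. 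The constant contribution $\|c_h\|_{2,\mu_\bfA^{-1},\Omega}^2 = |c_h|^2\int_\Omega \mu_\bfA^{-1}$ is handled by H\"older's inequality and the $A_2$-identity $(\int_\Omega\mu_\bfA)(\int_\Omega\mu_\bfA^{-1}) \leq C |\Omega|^2 [\mu_\bfA]_{A_2(\mathbb{R}^d)}$, yielding $\|c_h\|_{2,\mu_\bfA^{-1},\Omega}^2 \leq C\|z - \Pi_h^Q z\|_{2,\mu_\bfA^{-1},\Omega}^2$, which again merges into the previous bound.

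The infimum claim is immediate by admitting the constructed $z_h$ as a competitor. The main technical point is the weighted Jackson inequality in Case~2, where the constant must be quantitatively controlled by $[\mu_\bfA]_{A_2(\mathbb{R}^d)}$; this rests on the $L^2(\mu)$-boundedness of the Hardy--Littlewood maximal function for Muckenhoupt weights combined with the local $L^1$-stability of $\Pi_h^Q$. Once this estimate and the two-sided pointwise bound on $(\phi_a)^*$ are in place, the remaining steps are standard modular and weighted approximation manipulations.
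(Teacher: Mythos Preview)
Your proof is correct and follows the same overall architecture as the paper: split via $\Delta_2$ of $(\phi_{|\bfA|})^*$ into the approximation error $z-\Pi_h^Qz$ and the mean $c_h=\langle \Pi_h^Qz-z\rangle_\Omega$, invoke the two pointwise bounds $(\phi_a)^*\le c\,\phi^*$ and $(\phi_a)^*\le c\,\mu_a^{-1}s^2$, and then apply the Orlicz Jackson estimate (Case~1) respectively the weighted Bramble--Hilbert estimate from the appendix (Case~2). The only genuine difference is in how you dispose of the constant $c_h$. The paper bounds $|c_h|\le c\,M_d((z-\Pi_h^Qz)\chi_\Omega)(x)$ pointwise and then invokes boundedness of the Hardy--Littlewood maximal operator on $L^{\phi^*}(\mathbb{R}^d)$ and on $L^2(\mathbb{R}^d;\mu_\bfA^{-1})$, which treats both cases uniformly. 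You instead use Jensen's inequality for $\phi^*$ in Case~1 and weighted Cauchy--Schwarz together with $(\int_\Omega\mu_\bfA)(\int_\Omega\mu_\bfA^{-1})\le c\,|\Omega|^2[\mu_\bfA]_{A_2}$ in Case~2. Your route is slightly more elementary for the mean term, since it avoids the maximal function there (it is still needed inside the weighted Jackson estimate, as you note). Both arrive at the same bounds with constants depending on the same data.
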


\begin{proof}\let\qed\relax
	Due to $\langle z\rangle_\Omega=0$ and the convexity of $(\phi_{\smash{\vert\bfA(x)\vert }})^*\colon \mathbb{R}^{\ge 0}\to \mathbb{R}^{\ge 0}$ for a.e.\ $x\in \Omega$, we have that\enlargethispage{7mm}
	\begin{align}\label{lem:shifted_Q_approx.1}
		\begin{aligned}
			\rho_{(\phi_{\smash{\vert\bfA\vert }})^*,\Omega}(z-\Pi_h^Qz-\langle \Pi_h^Q z\rangle_\Omega)&=	\rho_{(\phi_{\smash{\vert\bfA \vert }})^*,\Omega}(z-\Pi_h^Qz-\langle z-\Pi_h^Q z\rangle_\Omega)
			\\[-0.5mm]&\leq c\,\rho_{(\phi_{\smash{\vert\bfA\vert }})^*,\Omega}(z-\Pi_h^Qz)+c\,\rho_{(\phi_{\smash{\vert\bfA\vert }})^*,\Omega}(\langle z-\Pi_h^Q z\rangle_\Omega)\,.
		\end{aligned}
	\end{align}
	Note that, for a.e.\ $x\in \Omega$, there holds
	\begin{align}
          \begin{aligned}
            \vert \langle z-\Pi_h^Q z\rangle_\Omega\vert& \leq \vert B_{\textup{diam}(\Omega)}^d(x)\vert\vert\Omega\vert^{-1} \langle \vert (z-\Pi_h^Q z)\chi_{\Omega}\vert\rangle_{\smash{B_{\textup{diam}(\Omega)}^d(x)}}
            \\[-1mm]&\leq c\,M_d( (z-\Pi_h^Q z)\chi_\Omega)(x)\,,
          \end{aligned}
	\end{align}
	where  $M_d\colon L^1_{\textup{loc}}(\Omega)\to L^0(\Omega)$ denotes the Hardy--Littlewood maximal operator.
	 
	Next, we need to distinguish the cases $z\in W^{1,p'}(\Omega)$ and $\nabla z\in (L^2(\Omega;\smash{\mu_\bfA^{-1}}))^d$:
	
	\textit{Case $z\in W^{1,p'}(\Omega)$.} In this case, using that $(\phi_{\smash{\vert\bfA(x)\vert }})^*(t)\leq c\, \phi^*(t)$ for all $t\ge 0$ and a.e.\ $x\in \Omega$, the stability of $M_d$ from $L^{\varphi^*}(\mathbb{R}^d)$ to $L^{\varphi^*}(\mathbb{R}^d)$, and \cite[Lem.\ 5.3]{bdr-phi-stokes}, we find that
	\begin{align*}
		\rho_{(\phi_{\smash{\vert\bfA\vert }})^*,\Omega}(\langle (z-\Pi_h^Q z) \rangle_\Omega\,\chi_{\Omega})&\leq 	\rho_{\phi^*,\mathbb{R}^d}(M_d( (z-\Pi_h^Q z)\chi_{\Omega}))
		\\[-0.5mm]&\leq c\,	\rho_{\phi^*,\Omega}(z-\Pi_h^Q z)
		\\[-0.5mm]&\leq c\,h^{p'}\,\rho_{\phi^*,\Omega}(\nabla z)\,.
	\end{align*}
	
	\textit{Case $\nabla z\hspace*{-0.1em}\in\hspace*{-0.1em}
          (L^2(\Omega;\smash{\mu_\bfA^{-1}}))^d$.} In this case, using
        that $(\phi_{\smash{\vert\bfA(x)\vert
          }})^*(t)\hspace*{-0.1em}\leq
        \hspace*{-0.1em}c\,\mu_\bfA(x)^{-1}\,t^2$~for~all~${t\hspace*{-0.1em}\ge\hspace*{-0.1em}
          0}$~and~a.e.~${x\hspace*{-0.1em}\in\hspace*{-0.1em}
          \Omega}$, the stability of $M_d$ from
        $L^2(\mathbb{R}^d;\smash{\mu_\bfA^{-1}})$ to
        $L^2(\mathbb{R}^d;\smash{\mu_\bfA^{-1}})$ (\textit{cf.}\
       \cite{singular_integrals}),  and
        Lemma~\ref{lem:PiYstab_muckenhoupt}\eqref{lem:PiYstab_muckenhoupt.2}, we find that
	\begin{align*}
		\rho_{(\phi_{\smash{\vert\bfA\vert }})^*,\Omega}(\langle (z-\Pi_h^Q z) \rangle_\Omega\,\chi_{\Omega})&\leq c\,	\|M_d( (z-\Pi_h^Q z)\chi_{\Omega})\|_{2,\smash{\mu_\bfA^{-1}},\mathbb{R}^d}^2
		\\[-0.5mm]&\leq 	c\,\|z-\Pi_h^Q z\|_{2,\smash{\mu_\bfA^{-1}},\Omega}^2
		\\&\leq 	c\,h^2\,\|\nabla z\|_{2,\smash{\mu_\bfA^{-1}},\Omega}^2\,.\tag*{$\qedsymbol$}
	\end{align*}
\end{proof}

\vspace*{-1cm}
\begin{proof}[Proof (of Theorem \ref{thm:error_pressure}).] 
	Appealing to Lemma~\ref{lem:discrete_convex_conjugation_ineq}, there exists a constant $c>0$ such
	that for every $z_h\in \Qhkco$, it holds that\enlargethispage{5mm}
	\begin{align}
		\label{thm:error.1}
		\rho_{(\varphi_{\smash{\vert\bfD\bfv\vert}})^*,\Omega}(z_h)\leq \sup_{\bfz_h\in \Vhk}{[(z_h,\Divhk\,\bfz_h)_\Omega-\smash{\tfrac{1}{c}}\,M_{\varphi_{\smash{\vert\bfD\bfv\vert}},h}(\bfz_h)]}+c\,h^2\,\|\nabla \bfF(\bfD\bfv)\|_{2,\Omega}^2\,.
	\end{align}
	Appealing to  \cite[(4.6)]{kr-pnse-ldg-3}, for every $\bfz_h\in \Vhk$, we have that
	\begin{align}\label{thm:error.2}
		\begin{aligned}
			(q_h-q,\Divhk \bfz_h)_\Omega
			&
			=(\SSS(\Dhk \bfv_h) - \SSS(\bfD
				\bfv),\Dhk \bfz_h)_\Omega
			\\&
			\quad
			+ \alpha \langle\SSS_{\smash{\sssl}}(h^{-1} \jump{(\bfv_h-\bfv)\otimes
				\bfn}), \jump{\bfz_h \otimes \bfn}\rangle_{\Gamma_h}
			\\&
			\quad
			-b_h(\bfv,\bfv,\bfz_h)+b_h(\bfv_h,\bfv_h,\bfz_h)
			\\&
			\quad
			+\langle\avg{\PiDG\bfS}-\avg{\bfS},\jump{\bfz_h\otimes
					\bfn}\rangle_{\Gamma_h}
			\\&
			\quad
			+\tfrac{1}{2}\langle\avg{\bfK}-\avg{\PiDG\bfK},\jump{\bfz_h\otimes\bfn}\rangle_{\Gamma_h}
			\\&
			\quad
			+\langle \avg{q\mathbf{I}_d}-\avg{(\PiDG q)\mathbf{I}_d},\jump{\bfz_h\otimes\bfn}\rangle_{\Gamma_h}
			\\&=I_h^1+\alpha\, I_h^2+I_h^3+\ldots+I_h^6\,,
		\end{aligned}
	\end{align}
	where the~\textit{discrete~convective~term} 
	$b_h\colon[(\WDG)^d]^3 \to  \mathbb{R}$, for every $(\bfx_h,\bfy_h,\bfz_h)^\top\in [(\WDG)^d]^3$,\linebreak is defined via
	\begin{align}
		b_h(\bfx_h,\bfy_h,\bfz_h)\coloneqq \tfrac{1}{2}( \bfz_h\otimes \bfx_h,\Ghk\bfy_h)_\Omega-\tfrac{1}{2}(\bfy_h\otimes \bfx_h,\Ghk\bfz_h)_\Omega\,.\label{def:bh}
	\end{align}
	So, let us next estimate $I_h^1,\ldots, I_h^6$: %for some arbitrary $\bfz_h\in \Vhk$:
	
	\textit{ad $I_h^1$}.  Using the $\varepsilon$-Young inequality
        \eqref{ineq:young} with $\psi =\varphi_{\smash{\vert\bfD\bfv\vert}}$,
        Proposition \ref{lem:hammer}, \eqref{eq:phi*phi'} with $\psi =\varphi_{\smash{\vert\bfD\bfv\vert}}$, $\Ghk\bfv_h=\nabla_h\bfv_h-\pazobfcal{R}_h^k\bfv_h$ together
        with Lemma \ref{lem:shifted_R_stability}  \eqref{lem:shifted_R_stability.2}, and Theorem
        \ref{thm:error_LDG}, we find that
	\begin{align}\label{thm:error.3}
		\begin{aligned}
			\vert I_h^1\vert &\leq c_\varepsilon\,\rho_{(\varphi_{\smash{\vert\bfD\bfv\vert}})^*,\Omega}(\bfS(\Dhk\bfv_h)-\bfS(\bfD\bfv))+\varepsilon\,\rho_{\varphi_{\smash{\vert\bfD\bfv\vert}},\Omega}(\Dhk\bfz_h)\\&\leq
			c_\varepsilon\,\|
					\bfF(\Dhk\bfv_h)-\bfF(\bfD\bfv)\|_{2,\Omega}^2+\varepsilon\,c\,\big(M_{\varphi_{\smash{\vert\bfD\bfv\vert}},h}(\bfz_h)+h^2 \,\|\nabla\bfF(\bfD\bfv)\|_{2,\Omega}^2\big)
			\\&\leq
			c_\varepsilon\,\big(h^2 \,\|\nabla \bfF(\bfD\bfv)\|_{2,\Omega}^2
			+c\,\rho_{(\varphi_{\vert \bfD\bfv\vert})^*,\Omega}(h\,\nabla q)\big)+\varepsilon\,c\,M_{\varphi_{\smash{\vert\bfD\bfv\vert}},h}(\bfz_h)
			\,.
		\end{aligned}
	\end{align}

	\textit{ad $I_h^2$}.
	Using the $\varepsilon$-Young inequality \eqref{ineq:young} with $\psi =\varphi_{\smash{\avg{\vert\Pia
				\Dhk\bfv_h\vert}}}$, Proposition \ref{lem:hammer}, \eqref{eq:phi*phi'} with $\psi =\varphi_{\smash{\avg{\vert\Pia
				\Dhk\bfv_h\vert}}}$, the shift change \eqref{lem:shift-change.1}, \cite[Lem.\ 4.12]{kr-pnse-ldg-2}, and Theorem \ref{thm:error_LDG}, we find that
	\begin{align}\label{thm:error.8}
	\begin{aligned}
			\vert I_h^2\vert &\leq c_\varepsilon\,h\,\rho_{(\varphi_{\smash{\avg{\vert\Pia
							\Dhk\bfv_h\vert}}})^*,\Gamma_h}\big(\bfS_{\smash{\avg{\vert\Pia
					\Dhk\bfv_h\vert}}}(h^{-1}\jump{\bfv_h\otimes\bfn})\big)+\varepsilon\,h\,\rho_{\varphi_{\smash{\avg{\vert\Pia
							\Dhk\bfv_h\vert}}},\Gamma_h}(h^{-1}\jump{\bfz_h\otimes\bfn})
			\\	&\leq c_\varepsilon\,m_{\varphi_{\smash{\avg{\vert\Pia \Dhk\bfv_h\vert}}},h}(\bfv_h)+\varepsilon\,c\,m_{\varphi_{\smash{\vert
						\bfD\bfv\vert}},h}(\bfz_h)+\varepsilon\,c\,h\,\rho_{\varphi_{\smash{\vert
						\bfD\bfv\vert}},\Gamma_h}(\vert \bfD\bfv\vert-\avg{\vert\Pia \Dhk\bfv_h\vert})
			\\	&\leq c_\varepsilon\,m_{\varphi_{\smash{\avg{\vert\Pia \Dhk\bfv_h\vert}}},h}(\bfv_h-\bfv)+\varepsilon\,c\,m_{\varphi_{\smash{\vert
						\bfD\bfv\vert}},h}(\bfz_h)
                                           \\&\quad
                                          +	\varepsilon\,c\,\big(\|
				\bfF(\Dhk\bfv_h)-\bfF(\bfD\bfv)\|_{2,\Omega}^2+h^2
          \,\|\nabla\bfF(\bfD\bfv)\|_{2,\Omega}^2\big) 
			\\	&\leq c_\varepsilon\,\big(\,h^2
                        \,\|\nabla \bfF(\bfD\bfv)\|_{2,\Omega}^2
			+c\,\rho_{(\varphi_{\vert \bfD\bfv\vert})^*,\Omega}(h\,\nabla q)\big)+\varepsilon\,c\,M_{\varphi_{\smash{\vert\bfD\bfv\vert}},h}(\bfz_h)
			\,. 
	\end{aligned}
	\end{align}
	
	\textit{ad $I_h^3$}. Introducing the notation $\bfe_h\coloneqq  \bfv_h-\bfv\in (\WDG)^d$, the term $I_h^3$ can be re-written~as
	\begin{align}\label{eq:i3}
		\begin{aligned}
			I_h^3 &= b_h(\bfv,\bfv-\PiDG\bfv,\bfz_h)-b_h(\bfe_h,\PiDG\bfv,\bfz_h)+b_h(\bfv_h,\PiDG\bfe_h,\bfz_h)
			\\&\eqqcolon  I_h^{3,1}+I_h^{3,2}+I_h^{3,3}\,.
		\end{aligned}
	\end{align}
	So, we have to estimate $ I_h^{3,i}$, $i=1,2,3$:
	
	\textit{ad $I_h^{3,1}$}.  The definition of $b_h\colon[(\WDG)^d]^3\to \mathbb{R}$ (\textit{cf.}\ \eqref{def:bh})
	yields that
	\begin{align}\label{eq:i31}
		\begin{aligned}
			2\,I_h^{3,1}=(\bfz_h\otimes \bfv,\Ghk(\bfv-\PiDG\bfv))_\Omega-((\bfv-\PiDG\bfv)\otimes \bfv,\Ghk\bfz_h)_\Omega\,,
		\end{aligned}
	\end{align}
	so that using Hölder's inequality, that
        $\|\bfv\|_{\infty,\Omega}\leq c\, \|\bfv\|_{2,2,\Omega}\leq
        c\,\delta^{2-p}\,\|\bfF(\bfD\bfv)\|_{1,2,\Omega}$ (\textit{cf.}\
        \cite[Lem.\ 4.5]{bdr-7-5}), the discrete Sobolev theorem (\textit{cf.}\ \cite[Prop.\ 2.6]{kr-pnse-ldg-3}), \eqref{eq:eqiv0}, the $\varepsilon$-Young inequality \eqref{ineq:young} with $\psi=\vert\cdot\vert^2$, the approximation properties of $\PiDG$
        (\textit{cf.}~\cite[Cor.\ A.4, Cor.\ A.7]{kr-phi-ldg}), $\|\nabla^2\bfv\|_{2,\Omega}\leq
        c_\varepsilon\,\delta^{2-p}\,\|\nabla\bfF(\bfD\bfv)\|_{2,\Omega}$\linebreak
         (\textit{cf.}\ \cite[Lem.\ 4.5]{bdr-7-5}), and
        $\|\bfz_h\|_{\nabla,2,h}^2 \le c\, \delta^{2-p}\,M_{\varphi_{\smash{\vert \bfD\bfv\vert}},h}(\bfz_h) $,~we~obtain
	\begin{align}
		\label{eq:i311}
		\begin{aligned}
			\vert I_h^{3,1}\vert &\leq 2\,\|\bfv\|_{\infty,\Omega}(\|\bfv-\PiDG\bfv\|_{2,\Omega}+\|\Ghk(\bfv-\PiDG\bfv)\|_{2,\Omega})(\|\bfz_h\|_{2,\Omega}+\|\Ghk\bfz_h\|_{2,\Omega})%\\
		%	&\leq c\,h\,\|\nabla^2\bfv\|_{2,\Omega}\|\bfz_h\|_{\nabla,2,h}
			\\
			&\leq
                        c_\varepsilon\,\|\bfv-\PiDG\bfv\|_{\nabla,2,h}^2+\varepsilon\,\|\bfz_h\|_{\nabla,2,h}^2
				\\
			&\leq c_\varepsilon\,h^2\,\|\nabla^2\bfv\|_{2,\Omega}^2+\varepsilon\, \|\bfz_h\|_{\nabla,2,h}^2
				\\
			&\leq c_\varepsilon\,h^2\,\|\nabla\bfF(\bfD\bfv)\|_{2,\Omega}^2+\varepsilon\,c\, M_{\varphi_{\smash{\vert \bfD\bfv\vert}},h}(\bfz_h)\,.
		\end{aligned}
	\end{align}
	
	\textit{ad $I_h^{3,2}$}. The definition of $b_h\colon[(\WDG)^d]^3\to \mathbb{R}$ (\textit{cf.}\ \eqref{def:bh})
	yields that\enlargethispage{10mm}
	\begin{align}
		\label{eq:i32}
		\begin{aligned}
			2\,I_h^{3,2}=(\bfz_h\otimes \bfe_h,\Ghk\PiDG\bfv)_\Omega+(\PiDG\bfv\otimes \bfe_h,\Ghk\bfz_h)_\Omega\,, 
		\end{aligned}
	\end{align}
	so that using Hölder's inequality, the discrete Sobolev theorem (\textit{cf.}\ \cite[Prop.\ 2.6]{kr-pnse-ldg-3}),  \eqref{eq:eqiv0}, the DG-stability property of
	$\PiDG$ (\textit{cf.}~\cite[(A.19)]{dkrt-ldg}), the $\varepsilon$-Young inequality \eqref{ineq:young} with $\psi=\vert\cdot\vert^2$, 
	the
	Korn type inequality \cite[Prop.\ 2.8]{kr-pnse-ldg-2}, $\bfv
        \in W^{1,2}(\Omega)^d$, that $\vert \bfA-\bfB\vert^2\leq c\, \delta^{2-p}\,\vert \bfF(\bfA)-\bfF(\bfB)\vert^2$  and $\vert \bfA\vert^2\leq c\, \delta^{2-p}\,\varphi_{\vert \bfB\vert}(\vert\bfA\vert)$ for all $\bfA,\bfB\in \mathbb{R}^{d\times d}$ (due to $p>2$), that $\|\nabla^2\bfv\|_{2,\Omega}\leq c\,\delta^{2-p}\,\|\nabla \bfF(\bfD\bfv)\|_{2,\Omega}$\linebreak (\textit{cf.}\ \cite[Lem.\ 4.5]{bdr-7-5}),~and Theorem~\ref{thm:error_LDG}, we~find~that
	\begin{align}
		\label{eq:i321}
		\begin{aligned}
				\vert I_{h}^{3,2}\vert &\leq
		(\|\PiDG\bfv\|_{4,\Omega}+\|\Ghk\PiDG\bfv\|_{2,\Omega})\|\bfe_h\|_{4,\Omega}(\|\bfz_h\|_{4,\Omega}+
		\|\Ghk\bfz_h\|_{2,\Omega})
			\\
			&\leq
			c\,\|\PiDG\bfv\|_{\nabla,2,h}\|\bfe_h\|_{\nabla,2,h}\|\bfz_h\|_{\nabla,2,h}
			\\
			&\leq
			c_\varepsilon\,\|\nabla\bfv\|_{2,\Omega}^2\big (\|\bfe_h\|_{\bfD,2,h}^2+h^2\,\|\nabla ^2 \bfv\|_{2,\Omega}^2\big )+\varepsilon\,\|\bfz_h\|_{\nabla,2,h}^2
			\\
			&\leq
			c_\varepsilon\,\big (\|
					\bfF(\Dhk\bfv_h)-\bfF(\bfD\bfv)\|_{2,\Omega}^2+m_{\phi_{\smash{\sssl}},h }
			(\bfv_h-\bfv)  +h^2\,\|\nabla\bfF(\bfD
				\bfv)\|_{2,\Omega}^2\big )+\varepsilon\,c\,M_{\varphi_{\smash{\vert \bfD\bfv\vert}},h}(\bfz_h)\hspace*{-5mm}\\
			&\leq c_\varepsilon\,\big(h^2\,\|\nabla \bfF(\bfD\bfv)\|_{2,\Omega}^2+\rho_{(\varphi_{\vert \bfD\bfv\vert})^*,\Omega}(h\,\nabla q)\big)+\varepsilon\,c\,M_{\varphi_{\smash{\vert \bfD\bfv\vert}},h}(\bfz_h)\,.
		\end{aligned}
	\end{align}
	
	\textit{ad $I_h^{3,3}$}.  The definition of $b_h\colon [(\WDG)^d]^3\to \mathbb{R}$ (\textit{cf.}\ \eqref{def:bh})
	yields that
	\begin{align}
		\label{eq:i33}
		\begin{aligned}
			2\,I_h^{3,3}=(\bfz_h\otimes \bfv_h,\Ghk\PiDG\bfe_h)_\Omega+(\PiDG\bfe_h\otimes \bfv_h,\Ghk\bfz_h)_\Omega\,, 
		\end{aligned}
	\end{align}
	so that using Hölder's inequality, the discrete Sobolev theorem (\textit{cf.}\ \cite[Prop.\ 2.6]{kr-pnse-ldg-3}),  \eqref{eq:eqiv0}, the DG-stability property of
	$\PiDG$ (\textit{cf.}~\cite[(A.19)]{dkrt-ldg}), the a priori estimate
        \cite[Prop.~5.7]{kr-pnse-ldg-1}, the $\varepsilon$-Young inequality \eqref{ineq:young} with $\psi=\vert\cdot\vert^2$, 
	the
	Korn type inequality \cite[Prop.\ 2.8]{kr-pnse-ldg-2}, that $\vert \bfA-\bfB\vert^2\leq c\, \delta^{2-p}\,\vert \bfF(\bfA)-\bfF(\bfB)\vert^2$  and $\vert \bfA\vert^2\leq c\, \delta^{2-p}\,\varphi_{\vert \bfB\vert}(\vert\bfA\vert)$ for all $\bfA,\bfB\in \mathbb{R}^{d\times d}$ (due to $p>2$), that $\|\nabla^2\bfv\|_{2,\Omega}\leq c\,\delta^{2-p}\,\|\nabla \bfF(\bfD\bfv)\|_{2,\Omega}$ \linebreak(\textit{cf.}\ \cite[Lem.\ 4.5]{bdr-7-5}),~and Theorem~\ref{thm:error_LDG}, we~find~that
	%Then, exploiting \eqref{eq:eqiv0}, the discrete Sobolev embedding
	%theorem (\textit{cf.}~\cite[Thm.~5.3]{EP12}), the DG-stability properties of
%	$\PiDG$ (\textit{cf.}~\cite[(A.18)]{dkrt-ldg}), the a priori estimate in
%	\cite[Prop. 5.7]{kr-pnse-ldg-1}, the
%	Korn type inequality \cite[Prop. 2.8]{kr-pnse-ldg-2}, the estimates \eqref{eq:w22}, 
%	\cite[(4.50)]{kr-pnse-ldg-2}, and \cite[Thm. 4.1]{kr-pnse-ldg-2}, we find that
	\begin{align}
		\label{eq:i331}
		\begin{aligned}
			\abs{I_h^{3,3}}&\leq \|\bfv_h\|_{4,\Omega}(\|\PiDG\bfe_h\|_{4,\Omega}+\|\Ghk\PiDG\bfe_h\|_{2,\Omega})(\|\bfz_h\|_{4,\Omega}+\|\Ghk\bfz_h\|_{2,\Omega})\\
			&\leq
			c\,\norm{\bfv_h}_{\nabla,p,h}\norm{\PiDG\bfe_h}_{\nabla,2,h}\|\bfz_h\|_{\nabla,2,h}\\
			&\leq
			c_\varepsilon\,\|\bfe_h\|_{\nabla,2,h}^2+\varepsilon\,\|\bfz_h\|_{\nabla,2,h}^2\\%\norm{\bfv_h}_{\nabla,p,h}\\ 
			&\leq c_\varepsilon\,\big(h^2\,\|\nabla \bfF(\bfD\bfv)\|_{2,\Omega}^2+\rho_{(\varphi_{\vert \bfD\bfv\vert})^*,\Omega}(h\,\nabla q)\big)+\varepsilon\,c\,M_{\varphi_{\smash{\vert \bfD\bfv\vert}},h}(\bfz_h)\,.
		\end{aligned}
	\end{align}
	
	Eventually, combining \eqref{eq:i3}--\eqref{eq:i331},  we conclude that
	\begin{align}\label{eq:i3fin}
		\begin{aligned}
			\vert I_h^3\vert \leq  c_\varepsilon  \,\big(h^2\,\|\nabla \bfF(\bfD\bfv)\|_{2,\Omega}^2
			+\rho_{(\varphi_{\vert \bfD\bfv\vert})^*,\Omega}(h\,\nabla q)\big)+\varepsilon\,c\,M_{\varphi_{\vert \bfD\bfv\vert},h}(\bfz_h)\,.
		\end{aligned}
	\end{align}
	
	\textit{ad $I_h^4$}.
	Using the $\varepsilon$-Young inequality \eqref{ineq:young}
        with $\psi=\varphi_{\smash{\vert \bfD\bfv\vert}}$  and 
        \cite[(4.43), (4.45)]{kr-pnse-ldg-2},  
        we find that
	\begin{align}\label{thm:error.14}
		\begin{aligned}
			\vert I_h^4\vert &\leq c_\varepsilon\,	h\,\rho_{(\varphi_{\vert \bfD\bfv\vert})^*,\Gamma_h}(\avg{\PiDG\bfS(\bfD\bfv)}-\avg{\bfS(\bfD\bfv)})
			+	\varepsilon\,h\,\rho_{\varphi_{\vert
					\bfD\bfv\vert},\Gamma_h}(h^{-1}\jump{\bfz_h\otimes\bfn})
			\\&\leq c_\varepsilon\,%\big( 
			h^2 \,\|\nabla \bfF(\bfD\bfv)\|_{2,\Omega}^2 %+\rho_{(\varphi_{\vert \bfD\bfv\vert})^*,\Omega}(h\,\nabla q)\big)
			+	\varepsilon\,c\,M_{\varphi_{\vert
					\bfD\bfv\vert},h}(\bfz_h)\,.
		\end{aligned}
	\end{align}

	\textit{ad $I_h^5$}.
	Using the $\varepsilon$-Young inequality \eqref{ineq:young} with $\psi=\vert\cdot\vert^2$ and, taking into account %$\bfv\in (L^\infty(\Omega))^d\cap (W^{1,2}_0(\Omega))^d$, 
	that  $\bfK =\bfv \otimes \bfv\in (W^{1,2}_0(\Omega))^{d\times d}$, where $\|\nabla     \bfK\|_{2,\Omega} \le c\,
	\|\bfv\|_{\infty,\Omega}\,\delta^{2-p}\|\bfF(\bfD\bfv)\|_{2,\Omega} $, using \cite[Cor. A.7]{kr-phi-ldg}, we find that
	\begin{align}\label{thm:error.19}
		\begin{aligned} 
		\vert I_h^5\vert
		&\leq  c_\varepsilon\, h\,\norm{\avg{\bfK}-\avg{\PiDG
				\bfK}}_{2,\Gamma_h}^2+\varepsilon\, h\,\norm{h^{-1}\jump{\bfz_h\otimes\bfn}}_{2,\Gamma_h}^2
			\\ &\leq  c_\varepsilon\, h^2\,\|\bfF(\bfD\bfv)\|_{2,\Omega}^2+\varepsilon\, M_{\varphi_{\vert
				\bfD\bfv\vert},h}(\bfz_h)\,.
			\end{aligned}
	\end{align}
	
	\textit{ad $I_h^6$}.
	Using the $\varepsilon$-Young inequality \eqref{ineq:young}
        with $\psi=\varphi_{\smash{\vert \bfD\bfv\vert}}$ and
        \cite[Prop.~4.10]{kr-pnse-ldg-2}, we find that
	\begin{align}\label{thm:error.19.1}
		\begin{aligned}
		\vert I_h^6\vert &\leq c_\varepsilon\,h\,	\rho_{(\varphi_{\vert
				\bfD\bfv\vert})^*,\Gamma_h}(\avg{q\mathbf{I}_d}-\avg{\PiDG(q\mathbf{I}_d)})	+\varepsilon\,	h\,\rho_{\varphi_{\vert
				\bfD\bfv\vert},h}(h^{-1}\jump{\bfz_h\otimes\bfn})
		\\&\leq c_\varepsilon\,\big(h^2 \,\|\nabla \bfF(\bfD\bfv)\|_{2,\Omega}^2
		+\rho_{(\varphi_{\vert \bfD\bfv\vert})^*,\Omega}(h\,\nabla q)  \big)
		+\varepsilon\,	M_{\varphi_{\vert
				\bfD\bfv\vert},h}(\bfz_h)\,.
			\end{aligned}
	\end{align}
	
		Putting everything together, from \eqref{thm:error.3}, \eqref{thm:error.8}, \eqref{eq:i3fin}, \eqref{thm:error.14}, \eqref{thm:error.19},  and \eqref{thm:error.19.1} 
	in \eqref{thm:error.2}, for every $\bfz_h\in \Vhk$, we conclude that
	\begin{align}\label{thm:fast.0}
		(q_h-q,\Divhk \bfz_h)_\Omega\leq  c_\varepsilon\,(h^2\,\|\nabla\bfF(\bfD\bfv)\|_{2,\Omega}^2+\rho_{(\phi_{\smash{\vert\bfD \bfv\vert }})^*,\Omega}(h\,\nabla q))+\varepsilon\, c\,M_{\varphi_{\smash{\vert
					\bfD\bfv\vert}},h}(\bfz_h)\,.
	\end{align}
        Using Lemma \ref{lem:discrete_convex_conjugation_ineq} and
        \eqref{thm:fast.0} for sufficiently small $\varepsilon>0$, the $\varepsilon$-Young inequality \eqref{ineq:young}
        with $\psi=\varphi_{\smash{\vert \bfD\bfv\vert}} $ for sufficiently small $\varepsilon>0$, for every $z_h\in \Qhkco$, we find  that
	\begin{align}\label{eq:fast.1}
		\begin{aligned}
                  \rho_{(\varphi_{\smash{\vert
                        \bfD\bfv\vert}})^*,\Omega}(q_h-q) &\leq
                  c\,\rho_{(\varphi_{\smash{\vert
                        \bfD\bfv\vert}})^*,\Omega}(q_h-z_h)+c\,
                  \rho_{(\varphi_{\smash{\vert
                        \bfD\bfv\vert}})^*,\Omega}(q-z_h)
                  \\
                  &\leq c\,
                  \sup_{\bfz_h\in
                    \Vo_h}{[(q_h-z_h,\Divhk\bfz_h)_\Omega-\smash{\tfrac{1}{c}}\,M_{\varphi_{\smash{\vert
                          \bfD\bfv\vert}},h}(\bfz_h)]}
                  \\
                  &\quad +c\,h^2 \,\|\nabla\bfF(\bfD\bfv)\|_{2,\Omega}^2+c\,\rho_{(\varphi_{\smash{\vert
                        \bfD\bfv\vert}})^*,\Omega}(q-z_h)
                  \\
                  &\leq c\,
                  \sup_{\bfz_h\in
                    \Vo_h}{[(q-z_h,\Divhk\bfz_h)_\Omega-\smash{\tfrac{1}{c}}\,M_{\varphi_{\smash{\vert
                          \bfD\bfv\vert}},h}(\bfz_h)]}+c\,\rho_{(\phi_{\smash{\vert\bfD \bfv\vert }})^*,\Omega}(h\,\nabla q))
                      \\&\quad +c\,h^2 \,\|\nabla\bfF(\bfD\bfv)\|_{2,\Omega}^2+c\,\rho_{(\varphi_{\smash{\vert
                        \bfD\bfv\vert}})^*,\Omega}(q-z_h) \\&\leq
                  c\,h^2 \,\|\nabla\bfF(\bfD\bfv)\|_{2,\Omega}^2
                  +c\,\rho_{(\varphi_{\smash{\vert
                        \bfD\bfv\vert}})^*,\Omega}(h\,\nabla
                  q)+c\,\rho_{(\varphi_{\smash{\vert
                        \bfD\bfv\vert}})^*,\Omega}(q-z_h)\,.
		\end{aligned}
	\end{align}
	Eventually, taking in \eqref{eq:fast.1} the infimum with respect to $z_h\in \Qhkco$, we conclude the claimed a priori error estimate for the pressure. 
\end{proof}

\begin{proof}[Proof (of Corollary \ref{cor:error_pressure}).] 
	Follows from Theorem \ref{thm:error_pressure} by means of Lemma~\ref{lem:shifted_Q_approx}.
\end{proof}

\newpage

\section{Numerical experiments}\label{sec:experiments}

\hspace*{5mm}In this section, we complement the theoretical findings of Section \ref{sec:ldg} with numerical experiments:~first, we carry out numerical experiments to confirm the quasi-optimality of the a priori error estimates in Corollary \ref{cor:error_pressure} with respect to the Muckenhoupt regularity condition~\eqref{eq:reg-assumption}; second, we carry out numerical experiments to examine the Muckenhoupt regularity condition \eqref{eq:reg-assumption} for its sharpness.~Before we do so, we first give some implementation details.

\subsection{Implementation details}
\hspace*{5mm}All experiments were carried out using the finite element software~\mbox{\texttt{FEniCS}} (version 2019.1.0, \textit{cf.}~\cite{LW10}). 
All graphics were generated with the help of the \texttt{Matplotlib} library (version 3.5.1,~\textit{cf.}~\cite{Hun07}). 
In the numerical experiments using the LDG formulation Problem (\hyperlink{Qhldg}{Q$_h$}) (or Problem (\hyperlink{Phldg}{P$_h$}), respectively), we always deploy element-wise affine, \textit{i.e.}, $k=1$, ansatz functions.

We approximate discrete solutions $(\bfv_h,q_h)^{\top}\in V_h^k\times \Qhkco$ of Problem (\hyperlink{Qhldg}{Q$_h$}) using the Newton solver from \texttt{PETSc} (version~3.17.3,  \textit{cf.}\ \cite{LW10}), with absolute tolerance of $\tau_{abs}= 1.0\times 10^{-8}$ and relative tolerance of $\tau_{rel}=1.0\times 10^{-10}$. The linear system emerging in each Newton iteration is solved using a sparse direct solver from \texttt{MUMPS} (version~5.5.0,~\textit{cf.}~\cite{mumps}).  In the implementation, the uniqueness of the pressure is enforced via adding a zero mean condition. Each integral that does contain non-discrete functions (\textit{e.g.}, the right-hand side integral in  Problem (\hyperlink{Qhldg}{Q$_h$}) and Problem (\hyperlink{Phldg}{P$_h$}) or the error quantities \eqref{error_quantities}) is discretized using a quadrature rule by G.\ Strang and G.\ Fix (\textit{cf.}\ \cite{strang_fix}) with degree of precision 6 (\textit{i.e.}, 12 quadrature points on each triangle) in two dimensions (\textit{i.e.}, $d=2$) and  using
the Keast rule \texttt{(KEAST7)} (\textit{cf.}\ \cite{keast}) with degree of precision 6 (\textit{i.e.}, employing
24 quadrature points on each element) in three dimensions (\textit{i.e.}, $d=3$).
\subsubsection{\itshape General experimental set-up}

\hspace*{5mm}We always employ the extra stress tensor $\bfS\colon  \mathbb{R}^{d\times d}\to\smash{\mathbb{R}^{d\times d}_{\mathrm{sym}}}$, where $\Omega=(0,1)^d$, for  every $\bfA\in\mathbb{R}^{d\times d}$ defined via
\begin{align*}
	\bfS(\bfA) \coloneqq \mu_0\,(\delta+\vert \bfA^{\textup{sym}}\vert)^{p-2}\bfA^{\textup{sym}}\,,
\end{align*}  
where  $\delta\coloneqq 1.0\times 10^{-5}$, $\mu_0 \coloneqq \frac{1}{2}$, and $ p\in [2.25, 3.5]$, which satisfies Assumption \ref{assum:extra_stress}.

We construct an initial triangulation $\pazocal{T}_{h_0}$, where $h_0=1$, for $d\in \{2,3\}$, in the following way:

\begin{itemize}[{($d=3$)}]
	\item[($d=2$)] We subdivide $\Omega=(0,1)^2$ into four triangles along its diagonals.
	\item[($d=3$)] We subdivide $\Omega=(0,1)^3$ into six tetrahedron forming a Kuhn triangulation $\mathcal{T}_0$ (\textit{cf.}\ \cite{kuhn}).
\end{itemize}
Then, finer triangulations $\pazocal{T}_{h_i}$, $i=1,\ldots,6$, where $h_{i+1}=\frac{h_i}{2}$ for all $i=0,\ldots,5$, are 
obtained by  applying the red-refinement rule (\textit{cf.}\ \cite[Def.~4.8(i)]{Ba16}). 

Then, for $i \hspace*{-0.1em}=\hspace*{-0.1em} 0,\ldots,6$, we compute discrete solutions  $(\bfv_i,q_i)^{\top}\hspace*{-0.15em}\coloneqq\hspace*{-0.15em}(\bfv_{h_i},q_{h_i})^{\top}\hspace*{-0.2em}\in\hspace*{-0.15em} \smash{V_{h_i}^k\hspace*{-0.15em}\times\hspace*{-0.15em} \Qo_{h_i,c}^k}$~of~Problem~(\hyperlink{Qhldg}{Q$_{h_i}$}) 
and the error quantities
\begin{align}\label{error_quantities} 
	\left.\begin{aligned}
		e_{\bfF,i}&\coloneqq \|\bfF(\smash{\pazobfcal{D}_{h_i}^1}\bfv_i)-\bfF(\bfD\bfv)\|_{2,\Omega}\,,\\[0.5mm]
		e_{\jump{},i}&\coloneqq (m_{\varphi_{\smash{\avg{\vert\Pi_{h_i}^0 \pazobfcal{D}_{h_i}^1\bfv_i\vert}}},h}(\bfv_i-\bfv))^{\smash{1/2}}\,,\\[1mm]
		e_{q,i}^{\textrm{norm}}&\coloneqq \|q_i-q\|_{p',\Omega}\,,\\
		e_{q,i}^{\textrm{modular}}&\coloneqq (\rho_{(\varphi_{\smash{\vert\bfD\bfv\vert}})^*,\Omega}(q_i-q))^{\smash{1/2}}\,,
	\end{aligned}\quad\right\}\,,\quad i=0,\ldots,6\,.
\end{align}

As  estimation  of  the  convergence rates, we compute the experimental order of convergence~(EOC)
\begin{align}
	\texttt{EOC}_i(e_i)\coloneqq\frac{\log(e_i/e_{i-1})}{\log(h_i/h_{i-1})}\,, \quad i=1,\ldots,6\,,\label{eoc}
\end{align}
where for every $i= 0,\ldots,6$, we denote by $e_i$ either $\smash{e_{\bfF,i}}$, $\smash{e_{\jump{},i}}$, $ \smash{e_{q,i}^{\textrm{norm}}}$, or $\smash{e_{q,i}^{\textrm{modular}}}$.

\subsubsection{\itshape Quasi-optimality of the a priori error estimates derived in Corollary \ref{cor:error_pressure}}

\hspace*{5mm}In order to confirm the quasi-optimality of the a priori error estimates derived in Corollary \ref{cor:error_pressure},  we restrict to the two-dimensional case and choose the right-hand side $\bff \in (L^{p'}(\Omega))^2$ and the Dirichlet boundary data $\bfv_0\in (W^{1,1-1/p}(\partial\Omega))^2$  such that $\bfv\in V$ and $q \in \Qo$, for every $x\coloneqq (x_1,x_2)^\top\in \Omega$ defined via 
\begin{align}
	\bfv(x)\coloneqq \vert x\vert^{\beta} (x_2,-x_1)^\top\,, \qquad q(x)\coloneqq  \vert x\vert^{\gamma}-\langle\,\vert \cdot\vert^{\gamma}\,\rangle_\Omega\,,
\end{align}
are a solutions to  \eqref{eq:p-navier-stokes}. Since, in this case, we consider given inhomogeneous Dirichlet boundary data, we do not employ Problem (\hyperlink{Qhldg}{Q$_h$})~(or~Problem~(\hyperlink{Phldg}{P$_h$}),~respectively), but their consistent inhomogeneous generalizations, \textit{i.e.}, we employ the inhomogeneous generalization of Problem (\hyperlink{Qhldg}{Q$_h$}) (or Problem (\hyperlink{Phldg}{P$_h$}), respectively) from \cite{kr-pnse-ldg-1}.

Concerning the regularity of the velocity vector field, we choose $\beta=1.0\times 10^{-2}$, which just yields that ${\bfF(\bfD\bfv)\in (W^{1,2}(\Omega))^{2\times 2}}$. Concerning the regularity of the pressure, we distinguish two~different~cases:
\begin{itemize}[{(Case 2)}]
	\item[\hypertarget{case_1}{(Case 1)}] We choose $\gamma= 1-2/p'+1.0\times 10^{-2}$, which  just  yields that $\smash{q \in
		W^{1,p'}(\Omega)}$;
	\item[\hypertarget{case_2}{(Case 2)}] We choose $\gamma= \beta(p-2)/2+1.0\times 10^{-2}$, which  just yields that $\nabla q \in
	(L^2(\Omega;\mu^{-1}))^2$.
\end{itemize}
In both Case \hyperlink{case_1}{1} and Case \hyperlink{case_2}{2}, we
have that\footnote{as to be expected in the two-dimensional case
  (\textit{cf.}\ Remark \ref{rem:muckenhoupt} (ii)).}
$\bfD\bfv\hspace*{-0.1em}\in\hspace*{-0.1em}
(C^{0,\beta}(\overline{\Omega}))^{2\times 2}$ and, thus,
$\mu_{\bfD\bfv}\hspace*{-0.1em}\coloneqq \hspace*{-0.1em}
(\delta+\vert
\overline{\bfD\bfv}\vert)^{p-2}\hspace*{-0.1em}\in\hspace*{-0.1em}
A_2(\mathbb{R}^2)$ (\textit{cf.}\ Remark \ref{rem:muckenhoupt} (i)).
Hence, in Case \hyperlink{case_1}{1}, we can expect the
	convergence rate $\texttt{EOC}_i(e_{q,i}^{\textrm{modular}})\approx p'/2$, $i=1,\ldots,6$, while in Case \hyperlink{case_2}{2},  we can expect the convergence rate ${\texttt{EOC}_i(e_{q,i}^{\textrm{modular}})\approx 1}$,~${i=1,\ldots,6}$~(\textit{cf.}\ Corollary~\ref{cor:error_pressure}).

For different values of $p\in \{2.25, 2.5, 2.75, 3, 3.25, 3.5\}$ and a
series of triangulations~$\mathcal{T}_{h_i}$, $i = 1,\ldots,6$,
obtained by regular, global refinement as described above, the EOC is
computed and presented in Table~\ref{tab1}. In both Case \hyperlink{case_1}{1} and Case \hyperlink{case_2}{2}, 
	we observe the expected convergence rate of about $\texttt{EOC}_i(e_{q,i}^{\textrm{modular}})\approx p'/2$, $i=1,\ldots,6$, (in Case \hyperlink{case_1}{1}) and ${\texttt{EOC}_i(e_{q,i}^{\textrm{modular}})\approx 1}$,~${i=1,\ldots,6}$, (in Case \hyperlink{case_2}{2}).

\begin{table}[ht]
	\setlength\tabcolsep{3pt}
	\centering
	\begin{tabular}{c |c|c|c|c|c|c|c|c|c|c|c|c|} \cmidrule(){1-13}
		\multicolumn{1}{|c||}{\cellcolor{lightgray}$\gamma$}	
		& \multicolumn{6}{c||}{\cellcolor{lightgray}Case \hyperlink{case_1}{1}}   & \multicolumn{6}{c|}{\cellcolor{lightgray}Case \hyperlink{case_2}{2}}\\ 
		\hline 
		
		\multicolumn{1}{|c||}{\cellcolor{lightgray}\diagbox[height=1.1\line,width=0.11\dimexpr\linewidth]{\vspace{-0.6mm}$i$}{\\[-5mm] $p$}}
		& \cellcolor{lightgray}2.25 & \cellcolor{lightgray}2.5  & \cellcolor{lightgray}2.75  &  \cellcolor{lightgray}3.0 & \cellcolor{lightgray}3.25  & \multicolumn{1}{c||}{\cellcolor{lightgray}3.5} &  \multicolumn{1}{c|}{\cellcolor{lightgray}2.25}   & \cellcolor{lightgray}2.5  & \cellcolor{lightgray}2.75  & \cellcolor{lightgray}3.0  & \cellcolor{lightgray}3.25 &   \cellcolor{lightgray}3.5 \\ \hline\hline
		\multicolumn{1}{|c||}{\cellcolor{lightgray}$1$}             & 0.901 & 0.795 & 0.751 & 0.720 & 0.695 & \multicolumn{1}{c||}{0.674} & \multicolumn{1}{c|}{2.160} & 1.961 & 1.834 & 1.726 & 1.676 & 1.597 \\ \hline
		\multicolumn{1}{|c||}{\cellcolor{lightgray}$2$}             & 0.900 & 0.832 & 0.784 & 0.748 & 0.720 & \multicolumn{1}{c||}{0.697} & \multicolumn{1}{c|}{1.621} & 1.531 & 1.430 & 1.396 & 1.294 & 1.245 \\ \hline
		\multicolumn{1}{|c||}{\cellcolor{lightgray}$3$}             & 0.905 & 0.839 & 0.792 & 0.756 & 0.728 & \multicolumn{1}{c||}{0.706} & \multicolumn{1}{c|}{1.160} & 1.164 & 1.167 & 1.183 & 1.242 & 1.219 \\ \hline
		\multicolumn{1}{|c||}{\cellcolor{lightgray}$4$}             & 0.907 & 0.841 & 0.793 & 0.758 & 0.730 & \multicolumn{1}{c||}{0.708} & \multicolumn{1}{c|}{1.039} & 1.043 & 1.048 & 1.073 & 1.123 & 1.125 \\ \hline
		\multicolumn{1}{|c||}{\cellcolor{lightgray}$5$}             & 0.908 & 0.841 & 0.793 & 0.758 & 0.730 & \multicolumn{1}{c||}{0.708} & \multicolumn{1}{c|}{1.016} & 1.017 & 1.019 & 1.023 & 1.044 & 1.127 \\ \hline
		\multicolumn{1}{|c||}{\cellcolor{lightgray}$6$}             & 0.908 & 0.841 & 0.793 & 0.758 & 0.730 & \multicolumn{1}{c||}{0.708} & \multicolumn{1}{c|}{1.011} & 1.012 & 1.013 & 1.014 & 1.017 & 1.060 \\ \hline\hline
		\multicolumn{1}{|c||}{\cellcolor{lightgray} theory}         & 0.900 & 0.833 & 0.786 & 0.750 & 0.722 & \multicolumn{1}{c||}{0.700} & \multicolumn{1}{c|}{1.000} & 1.000 & 1.000 & 1.000 & 1.000 & 1.000 \\ \hline
	\end{tabular}
	\caption{Experimental order of convergence (LDG): $\textup{\texttt{EOC}}_i(e_{q,i}^{\textup{modular}})$,~${i=1,\ldots,6}$.}\label{tab1}
\end{table}

\newpage
\subsubsection{\textit{(Non-)sharpness of the Muckenhoupt  regularity condition \eqref{eq:reg-assumption} in Corollary \ref{cor:error_pressure}}}

\hspace*{5mm}
In order to examine the Muckenhoupt  regularity condition \eqref{eq:reg-assumption} in  Corollary \ref{cor:error_pressure} for its sharpness, \textit{i.e.}, necessity for the convergence rates derived in Corollary \ref{cor:error_pressure}, following the construction in \cite{kr-nekorn,kr-nekorn-add}, we consider the three dimensional case
(\textit{i.e.}, $d=3$) and choose the right-hand side ${\bff\in (L^{p'}(\Omega))^3}$ such that $\bfv \in \Vo(0)$ and $q\in \Qo$, for every $x\coloneqq (x_1,x_2,x_3)^\top\in \Omega$, are defined via
\begin{align*}
	\bfv(x)\coloneqq \sum_{k\in \mathbb{N}}{\bfv^k(x)}\,,\qquad q(x)\coloneqq 25\,(\vert x\vert^\gamma -\langle \vert \cdot\vert^{\gamma} \rangle_\Omega)\,,
\end{align*}
where, for every $k\in \mathbb{N}$,
\begin{align*}
	\bfv^k(x)\coloneqq \begin{cases}
			\frac{k}{r^k}\big(\frac{r^k}{2}+\frac{\vert x-\mathbf{m}^k\vert }{2}-\vert x-\mathbf{m}^k\vert\big)(x_2-m_2^k,-(x_1-m_1^k),0)^\top&\text{ if }x\in B_{r^k}^3(\mathbf{m}^k)\,,\\
			\mathbf{0}&\text{ if }x\in \Omega\setminus B_{r^k}^3(\mathbf{m}^k)\,.
	\end{cases}
\end{align*}
for sequences $(r^k)_{k\in \mathbb{N}}\subseteq \mathbb{R}^{>0}$ and $(\mathbf{m}^k)_{k\in \mathbb{N}}\coloneqq ((m^k_1,m^k_2,m^k_3)^\top)_{k\in \mathbb{N}}\subseteq \Omega$ satisfying 
\begin{align}\label{conditions}
	r^{k+1}\leq \frac{1}{2}r^k\,,\quad B_{r^k}^3(\mathbf{m}^k)\subseteq \Omega\,,\quad B_{r^k}^3(\mathbf{m}^k)\cap B_{r^{k+1}}^3(\mathbf{m}^{k+1})=\emptyset\qquad\text{ for all }k\in \mathbb{N}\,.
\end{align} 
Following the argumentation in \cite{kr-nekorn,kr-nekorn-add}, one finds that $	\bfF(\bfD\bfv)\in (W^{1,2}(\Omega))^{3\times 3}$ 
while, on the other hand, one finds that
\begin{align}\label{violation}
	\mu_{\bfD\bfv} \coloneqq (\delta+\vert \overline{\bfD\bfv}\vert)^{p-2}\notin A_2(\mathbb{R}^3)\,.
\end{align}
In the numerical experiments, we choose $r^k\hspace*{-0.1em}\coloneqq \hspace*{-0.1em}2^{-k-2}$ and ${\mathbf{m}^k\hspace*{-0.1em}\coloneqq\hspace*{-0.1em} \mathbf{e}_1+3 r^k ( \mathbf{q}_0-\mathbf{e}_1)/\vert \mathbf{q}_0-\mathbf{e}_1\vert}$~for~all~${k\hspace*{-0.1em}\in\hspace*{-0.1em} \mathbb{N}}$, where $\mathbf{q}_0\hspace*{-0.1em}\in \hspace*{-0.1em}K_0\hspace*{-0.1em}\coloneqq\hspace*{-0.1em} \textup{conv}\{\mathbf{0},\mathbf{e}_1,\mathbf{e}_1+\mathbf{e}_2,\mathbf{e}_1+\mathbf{e}_2+\mathbf{e}_3\}\hspace*{-0.1em}\in\hspace*{-0.1em} \mathcal{T}_0$ is the quadrature point of~the~Keast~rule~\texttt{(KEAST7)} (\textit{cf.}\ \cite{keast}) in the initial triangulation $\mathcal{T}_0$ that is closest to the unit vector $\mathbf{e}_1\coloneqq (1,0,0)^\top\in \mathbb{S}^2$~(\textit{cf.} Figure~\ref{fig:1} (LEFT))\footnote{Note that there is a second quadrature point of the  Keast rule \texttt{(KEAST7)} (\textit{cf.}\ \cite{keast}) in an adjacent element that has the same distance to the first unit vector $\mathbf{e}_1\in \mathbb{S}^2$.}. In this way, we can guarantee that conditions \eqref{conditions} are met as well as that for every $i\in \mathbb{N}$, there exists an integer $N_i\in \mathbb{N}$ such that 
\begin{align*}
	\mathbf{q}_i\coloneqq (1-2^{-N_i})\,\mathbf{e}_1+2^{-N_i}\, \mathbf{q}_0\in \overline{B_{r^{N_i}}^3(\mathbf{m}^{N_i})}\,,
\end{align*}
which is the quadrature point of the  Keast rule \texttt{(KEAST7)} (\textit{cf.}\ \cite{keast}) in $\mathcal{T}_i$ that is closest to the unit vector $\mathbf{e}_1\in \mathbb{S}^2$. 
This has the consequence that for every $k\in \mathbb{N}$ with $k\ge N_i+1$, the ball $B_{r^k}^3(\mathbf{m}^k)$ does not contain any quadrature point of the Keast rule \texttt{(KEAST7)} (\textit{cf.}\ \cite{keast}) in $\mathcal{T}_i$. Hence,~in~the~numerical~experiments, for every refinement step $i\in \mathbb{N}$,~we~can~interchange $	\bfv\coloneqq \sum_{k\in \mathbb{N}}{\bfv^k}\in \smash{(W^{1,p}_0(\Omega))^3}$ (which cannot be directly implemented) by 	$\overline{\bfv}^{\smash{N_i}}\coloneqq \sum_{k=1}^{\smash{N_i}}{\bfv^k}\in \smash{(W^{1,p}_0(\Omega))^3}$ (\textit{cf.}\ Figure \ref{fig:1} and Figure \ref{fig:2}(MIDDLE/RIGHT))
(which can directly be implemented); therefore,~we~can interchange  $\bff\coloneqq -\textup{div}\,\bfS(\bfD\bfv)+[\nabla\bfv]\bfv+\nabla q\in (L^{p'}(\Omega))^3$ by $\overline{\bff}^{\smash{N_i}}\coloneqq  -\textup{div}\,\bfS(\bfD\bfv^{\smash{N_i}})+[\nabla\bfv^{\smash{N_i}}]\bfv^{\smash{N_i}}+\nabla q\in (L^{p'}(\Omega))^3$ without changing the discrete formulations.  
Also note that the violation of the Muckenhoupt condition \eqref{violation} can be asymptotically confirmed 
using the Keast rule \texttt{(KEAST7)} (\textit{cf.}\ \cite{keast}). In fact, in Figure \ref{fig:3}, denoting by $\mathrm{d}\mu_{h_i}$ the discrete measure representing the Keast rule \texttt{(KEAST7)} (\textit{cf.}\ \cite{keast}) in $\mathcal{T}_i$, it is indicated that
\begin{align*}
	E_i\coloneqq \bigg(\int_{B_{r^{N_i}}^3(\mathbf{m}^{N_i})}{\mu_{\bfD\bfv} \,\mathrm{d}\mu_{h_i}}\bigg)\bigg(\int_{B_{r^{N_i}}^3(\mathbf{m}^{N_i})}{\mu_{\bfD\bfv} ^{-1}\,\mathrm{d}\mu_{h_i}}\bigg)\to\infty \quad(i\to \infty)\,,
\end{align*}
indicating that the violation of the Muckenhoupt condition \eqref{violation} is sufficiently resolved by the Keast rule \texttt{(KEAST7)} (\textit{cf.}\ \cite{keast}).
\newpage

\begin{figure}[ht]
	\hspace*{-0.25cm}\includegraphics[width=7.5cm]{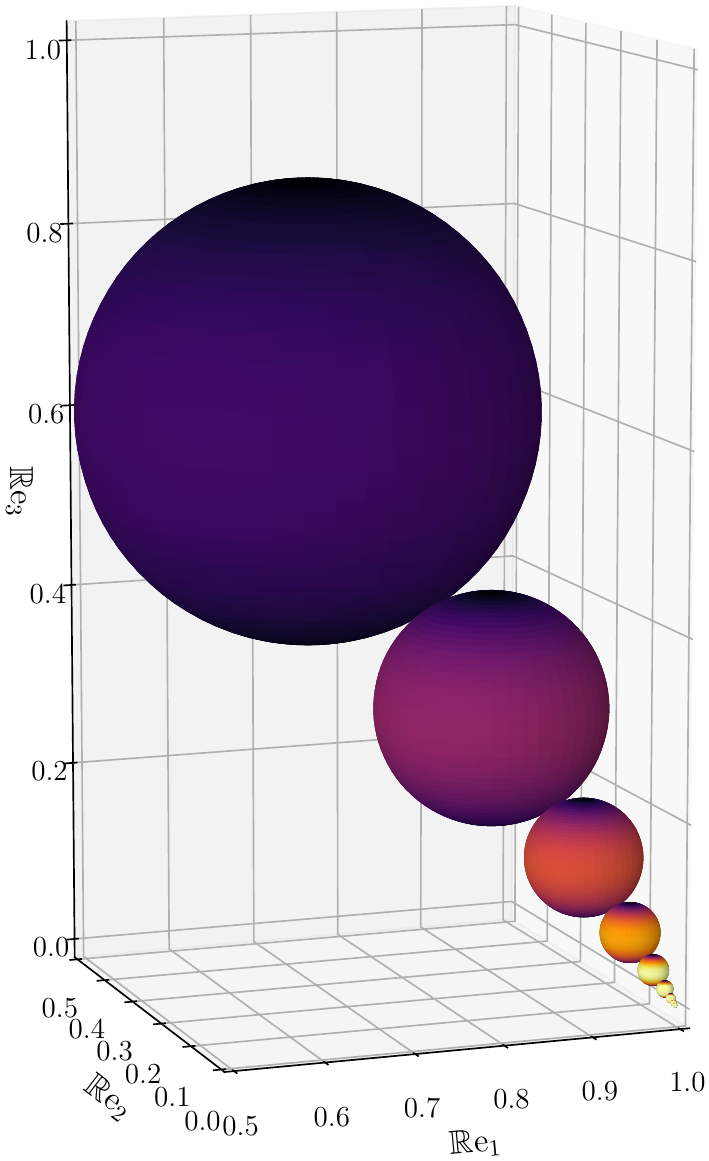}\includegraphics[width=7.5cm]{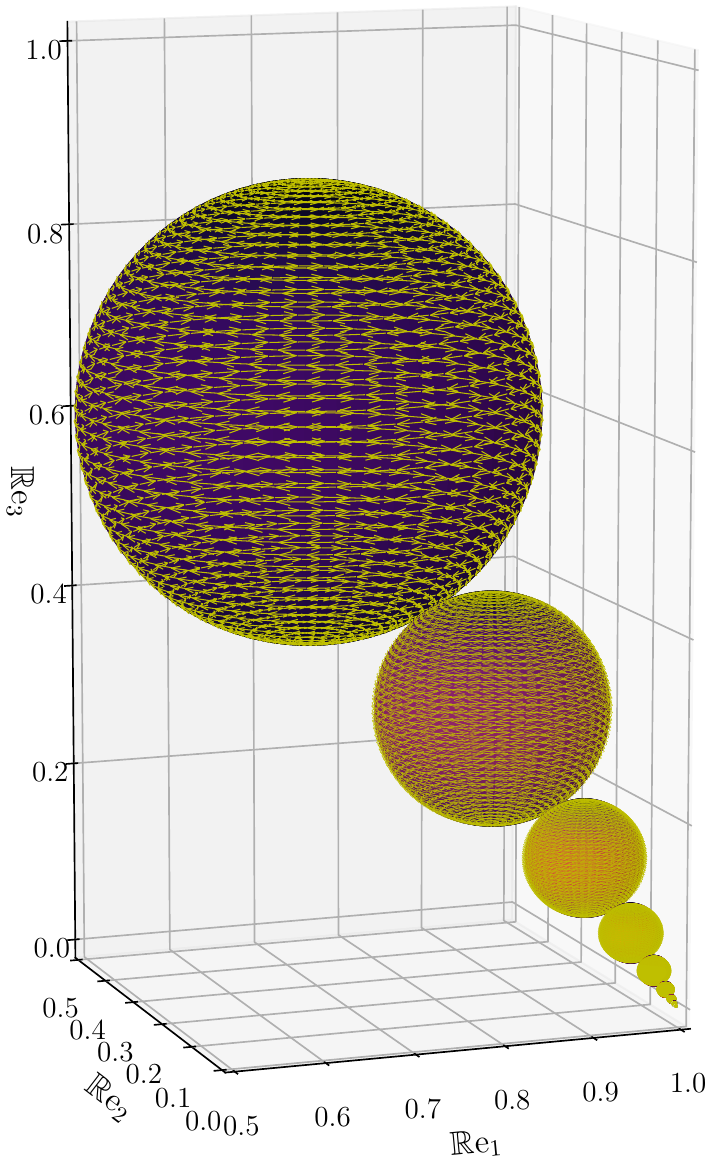}
	\caption{LEFT: Plot of  the strain rate $\vert\bfD \bfv\vert\colon \Omega\to \mathbb{R}_{\ge 0}$ restricted to the boundary of its support, \textit{i.e.},
		$\partial(\textup{supp}(\vert\bfD \bfv\vert))=\bigcup_{k=1}^\infty{\partial B_{r^k}^3(\mathbf{m}^k)}$.  The color map indicates that the strain rate 
	increases when approaching the first unit vector $\mathbf{e}_1\in \mathbb{S}^2$. RIGHT: Plot  of the velocity vector field $\bfv\colon \Omega\to \mathbb{R}^3$ restricted to the boundary of its support, \textit{i.e.},
	$\partial(\textup{supp}(\vert\bfv\vert))=\bigcup_{k=1}^\infty{\partial B_{r^k}^3(\mathbf{m}^k)}$.}\label{fig:1}
\end{figure}\vspace*{-10mm}\enlargethispage{10mm}

\begin{figure}[H]
	\hspace*{-0.25cm}\includegraphics[width=5cm]{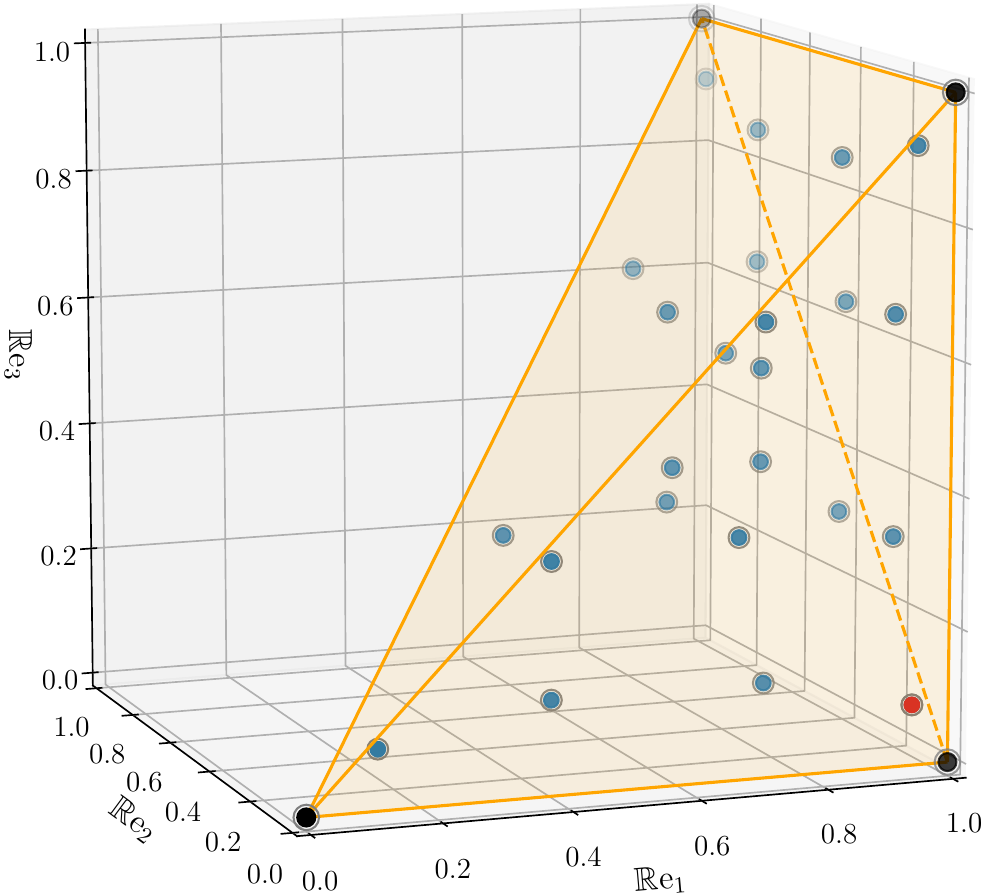}\includegraphics[width=5cm]{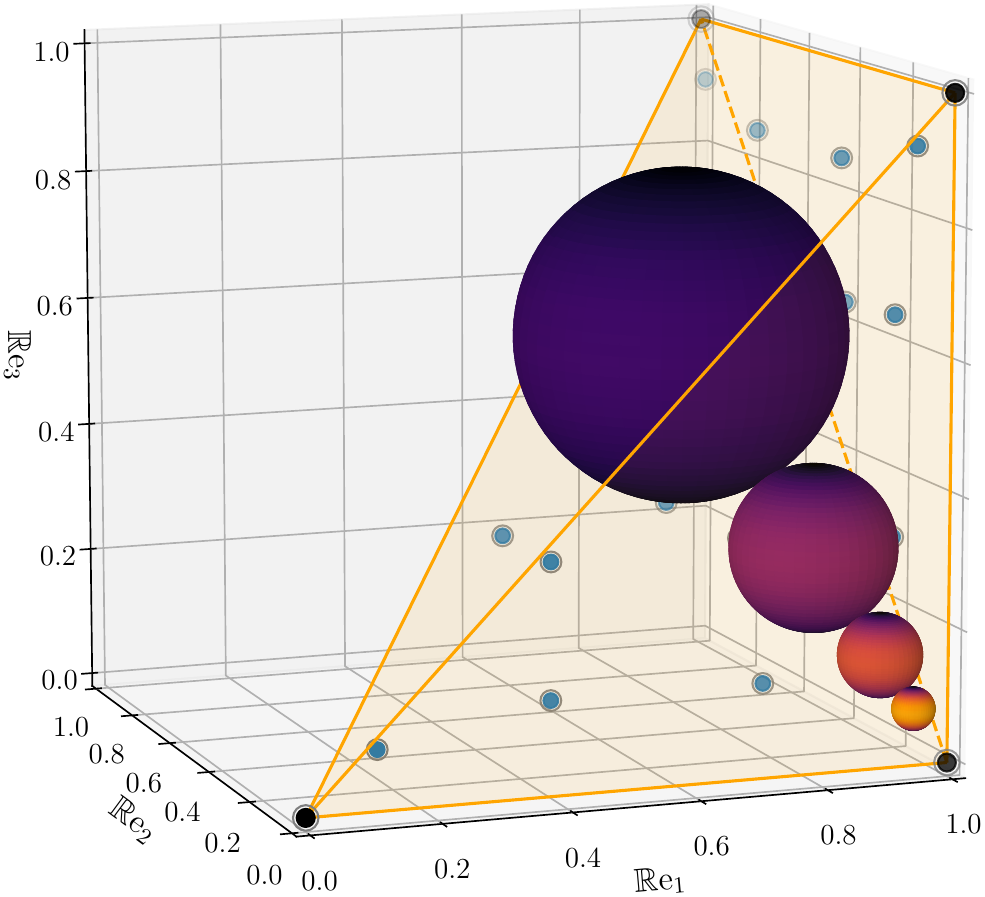}\includegraphics[width=5cm]{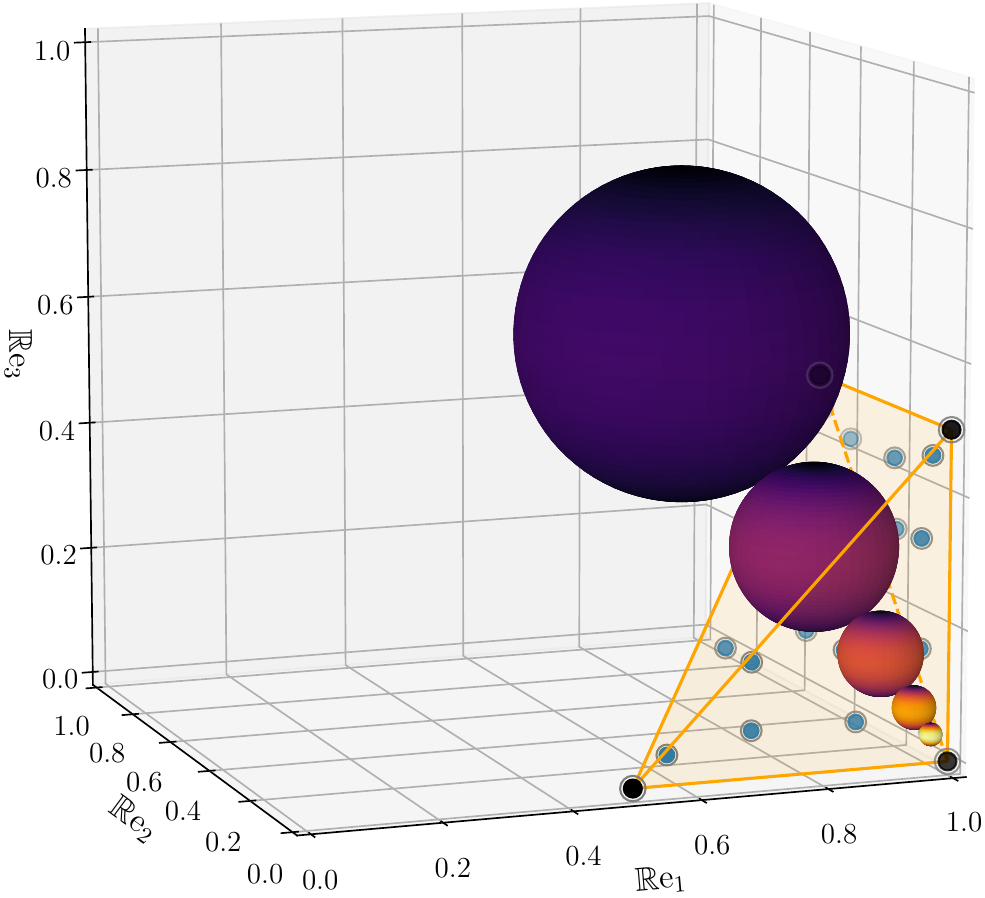}
	\caption{LEFT: Plot of the tetrahedron $K_0\coloneqq \textup{conv}\{\mathbf{0},\mathbf{e}_1,\mathbf{e}_1+\mathbf{e}_2,\mathbf{e}_1+\mathbf{e}_2+\mathbf{e}_3\}$ (in orange) and the corresponding transformed 24 quadrature points (in blue and red) of the  Keast rule \texttt{(KEAST7)} (\textit{cf.}\ \cite{keast}). The quadrature point $\mathbf{q}_0\in K_0$ closest to the first unit vector is marked in red; MIDDLE: Plot of $\overline{\bfv}^{N_0}\coloneqq \sum_{k=1}^{\smash{N_0}}{\bfv^k}\in \smash{(W^{1,p}_0(\Omega))^3}$, where $N_0=4$ is minimal such that
		$\mathbf{q}_0\in \textup{supp}(\overline{\bfv}^{N_0})$; RIGHT:
		Plot of $\overline{\bfv}^{N_1}\coloneqq \sum_{k=1}^{\smash{N_1}}{\bfv^k}\in \smash{(W^{1,p}_0(\Omega))^3}$, where $N_1=5$ is minimal such that
		$\mathbf{q}_1\in \textup{supp}(\overline{\bfv}^{N_1})$.}\label{fig:2}
\end{figure}\newpage

For different values of $p\in \{2.25, 2.5, 2.75, 3, 3.25, 3.5\}$ and a
series of triangulations~$\mathcal{T}_{h_i}$, $i = 1,2,3$,
obtained \hspace*{-0.1mm}by \hspace*{-0.1mm}regular, \hspace*{-0.1mm}global \hspace*{-0.1mm}refinement \hspace*{-0.1mm}as \hspace*{-0.1mm}described \hspace*{-0.1mm}above, \hspace*{-0.1mm}the \hspace*{-0.1mm}EOC \hspace*{-0.1mm}is~\hspace*{-0.1mm}computed~\hspace*{-0.1mm}and~\hspace*{-0.1mm}presented~\hspace*{-0.1mm}in~\hspace*{-0.1mm}Tables~\hspace*{-0.1mm}\mbox{\ref{tab2}--\ref{tab5}}. In both Case \hyperlink{case_1}{1} and Case \hyperlink{case_2}{2}, 
we observe the expected convergence rate of about $\texttt{EOC}_i(e_{\bfF,i})\approx\texttt{EOC}_i(e_{\jump{},i})\approx p'/2$, $i=1,2,3$,   (in Case \hyperlink{case_1}{1}) and $\texttt{EOC}_i(e_{\bfF,i})\approx\texttt{EOC}_i(e_{\jump{},i})\approx 1$, $ i=1,2,3$, (in Case \hyperlink{case_2}{2}). On the other hand, in both Case \hyperlink{case_1}{1} and Case \hyperlink{case_2}{2}, although the Muckenhoupt regularity~condition~\eqref{eq:reg-assumption}~is~not~met~(\textit{cf}.~\eqref{violation}), we 
report the convergence rate of about $\texttt{EOC}_i(e_{q,i}^{\textup{modular}})\approx p'/2$, $i=1,2,3$,   (in Case \hyperlink{case_1}{1}) and $\texttt{EOC}_i(e_{q,i}^{\textup{modular}})\approx 1$, $ i=1,2,3$, (in Case \hyperlink{case_2}{2}). 
This shows that the Muckenhoupt regularity condition \eqref{eq:reg-assumption} is merely sufficient, but not necessary, for the convergence rates derived in Corollary \ref{cor:error_pressure}. In addition,~in~both~Case~\hyperlink{case_1}{1}~and~Case~\hyperlink{case_2}{2}, we observe the increased convergence rate of about $\texttt{EOC}_i(e_{q,i}^{\textup{norm}})\approx 1$, $i=1,2,3$,   (in Case \hyperlink{case_1}{1}) and $\texttt{EOC}_i(e_{q,i}^{\textup{norm}})\approx 2/p'$, $ i=1,2,3$, (in Case \hyperlink{case_2}{2}). This indicates that the convergence rates for the kinematic pressure derived in Corollary \ref{cor:error_LDG} are also potentially sub-optimal in three dimensions.\enlargethispage{5mm}

\begin{table}[ht]
	\setlength\tabcolsep{3pt}
	\centering
	\begin{tabular}{c |c|c|c|c|c|c|c|c|c|c|c|c|} \cmidrule(){1-13}
		\multicolumn{1}{|c||}{\cellcolor{lightgray}$\gamma$}	
		& \multicolumn{6}{c||}{\cellcolor{lightgray}Case \hyperlink{case_1}{1}}   & \multicolumn{6}{c|}{\cellcolor{lightgray}Case \hyperlink{case_2}{2}}\\ 
		\hline 
		
		\multicolumn{1}{|c||}{\cellcolor{lightgray}\diagbox[height=1.1\line,width=0.11\dimexpr\linewidth]{\vspace{-0.6mm}$i$}{\\[-5mm] $p$}}
		& \cellcolor{lightgray}2.25 & \cellcolor{lightgray}2.5  & \cellcolor{lightgray}2.75  &  \cellcolor{lightgray}3.0 & \cellcolor{lightgray}3.25  & \multicolumn{1}{c||}{\cellcolor{lightgray}3.5} &  \multicolumn{1}{c|}{\cellcolor{lightgray}2.25}   & \cellcolor{lightgray}2.5  & \cellcolor{lightgray}2.75  & \cellcolor{lightgray}3.0  & \cellcolor{lightgray}3.25 &   \cellcolor{lightgray}3.5 \\ \hline\hline
		\multicolumn{1}{|c||}{\cellcolor{lightgray}$1$}             & 0.832 & 0.794 & 0.759 & 0.719 & 0.685 & \multicolumn{1}{c||}{0.648} & \multicolumn{1}{c|}{0.947} & 0.977 & 0.983 & 0.970 & 0.956 & 0.934 \\ \hline
		\multicolumn{1}{|c||}{\cellcolor{lightgray}$2$}             & 0.885 & 0.817 & 0.768 & 0.731 & 0.702 & \multicolumn{1}{c||}{0.679} & \multicolumn{1}{c|}{1.017} & 1.026 & 1.027 & 1.024 & 1.019 & 1.013 \\ \hline
		\multicolumn{1}{|c||}{\cellcolor{lightgray}$3$}             & 0.902 & 0.834 & 0.785 & 0.748 & 0.720 & \multicolumn{1}{c||}{0.697} & \multicolumn{1}{c|}{1.043} & 1.063 & 1.072 & 1.074 & 1.073 & 1.070 \\ \hline
		%\multicolumn{1}{|c||}{\cellcolor{lightgray}$4$}             & 1.008 & 1.007 & 1.006 & 1.004 & 1.003 & \multicolumn{1}{c||}{1.002} & \multicolumn{1}{c|}{1.172} & 1.296 & 1.391 & 1.464 & 1.521 & 1.566 \\ \hline
		%\multicolumn{1}{|c||}{\cellcolor{lightgray}$5$}             & 1.009 & 1.009 & 1.009 & 1.008 & 1.008 & \multicolumn{1}{c||}{1.007} & \multicolumn{1}{c|}{1.175} & 1.304 & 1.405 & 1.484 & 1.546 & 1.595 \\ \hline\hline
		\multicolumn{1}{|c||}{\cellcolor{lightgray} theory}         & 0.900 & 0.833 & 0.786 & 0.750 & 0.722 & \multicolumn{1}{c||}{0.700} & \multicolumn{1}{c|}{1.000} & 1.000 & 1.000 & 1.000 & 1.000 & 1.000 \\ \hline
	\end{tabular}
	\caption{Experimental order of convergence (LDG): $\texttt{EOC}_i(e_{\bfF,i})$,~${i=1,2,3}$.}\vspace*{1mm}
	\label{tab2}
	\setlength\tabcolsep{3pt}
	\centering
	\begin{tabular}{c |c|c|c|c|c|c|c|c|c|c|c|c|} \cmidrule(){1-13}
		\multicolumn{1}{|c||}{\cellcolor{lightgray}$\gamma$}	
		& \multicolumn{6}{c||}{\cellcolor{lightgray}Case \hyperlink{case_1}{1}}   & \multicolumn{6}{c|}{\cellcolor{lightgray}Case \hyperlink{case_2}{2}}\\ 
		\hline 
		
		\multicolumn{1}{|c||}{\cellcolor{lightgray}\diagbox[height=1.1\line,width=0.11\dimexpr\linewidth]{\vspace{-0.6mm}$i$}{\\[-5mm] $p$}}
		& \cellcolor{lightgray}2.25 & \cellcolor{lightgray}2.5  & \cellcolor{lightgray}2.75  &  \cellcolor{lightgray}3.0 & \cellcolor{lightgray}3.25  & \multicolumn{1}{c||}{\cellcolor{lightgray}3.5} &  \multicolumn{1}{c|}{\cellcolor{lightgray}2.25}   & \cellcolor{lightgray}2.5  & \cellcolor{lightgray}2.75  & \cellcolor{lightgray}3.0  & \cellcolor{lightgray}3.25 &   \cellcolor{lightgray}3.5 \\ \hline\hline
		\multicolumn{1}{|c||}{\cellcolor{lightgray}$1$}             & 0.761 & 0.549 & 0.325 & 0.082 & -0.17 & \multicolumn{1}{c||}{-0.43} & \multicolumn{1}{c|}{0.884} & 0.770 & 0.647 & 0.526 & 0.428 & 0.348 \\ \hline
		\multicolumn{1}{|c||}{\cellcolor{lightgray}$2$}             & 0.886 & 0.818 & 0.769 & 0.733 & 0.705 & \multicolumn{1}{c||}{0.684} & \multicolumn{1}{c|}{1.019} & 1.030 & 1.032 & 1.029 & 1.024 & 1.018 \\ \hline
		\multicolumn{1}{|c||}{\cellcolor{lightgray}$3$}             & 0.903 & 0.834 & 0.786 & 0.749 & 0.721 & \multicolumn{1}{c||}{0.699} & \multicolumn{1}{c|}{1.045} & 1.066 & 1.076 & 1.080 & 1.079 & 1.076 \\ \hline
		%\multicolumn{1}{|c||}{\cellcolor{lightgray}$4$}             & 0.907 & 0.839 & 0.790 & 0.753 & 0.725 & \multicolumn{1}{c||}{0.702} & \multicolumn{1}{c|}{1.055} & 1.080 & 1.093 & 1.098 & 1.099 & 1.097 \\ \hline
		%\multicolumn{1}{|c||}{\cellcolor{lightgray}$5$}             & 0.908 & 0.841 & 0.792 & 0.756 & 0.728 & \multicolumn{1}{c||}{0.705} & \multicolumn{1}{c|}{1.057} & 1.086 & 1.104 & 1.113 & 1.116 & 1.117 \\ \hline\hline
		\multicolumn{1}{|c||}{\cellcolor{lightgray} theory}         & 0.900 & 0.833 & 0.786 & 0.750 & 0.722 & \multicolumn{1}{c||}{0.700} & \multicolumn{1}{c|}{1.000} & 1.000 & 1.000 & 1.000 & 1.000 & 1.000 \\ \hline
	\end{tabular}
	\caption{Experimental order of convergence (LDG): $\texttt{EOC}_i(e_{\jump{},i})$,~${i=1,2,3}$.}\vspace*{1mm}
	\label{tab3}
	\setlength\tabcolsep{3pt}
	\centering
	\begin{tabular}{c |c|c|c|c|c|c|c|c|c|c|c|c|} \cmidrule(){1-13}
		\multicolumn{1}{|c||}{\cellcolor{lightgray}$\gamma$}	
		& \multicolumn{6}{c||}{\cellcolor{lightgray}Case \hyperlink{case_1}{1}}   & \multicolumn{6}{c|}{\cellcolor{lightgray}Case \hyperlink{case_2}{2}}\\ 
		\hline 
		
		\multicolumn{1}{|c||}{\cellcolor{lightgray}\diagbox[height=1.1\line,width=0.11\dimexpr\linewidth]{\vspace{-0.6mm}$i$}{\\[-5mm] $p$}}
		& \cellcolor{lightgray}2.25 & \cellcolor{lightgray}2.5  & \cellcolor{lightgray}2.75  &  \cellcolor{lightgray}3.0 & \cellcolor{lightgray}3.25  & \multicolumn{1}{c||}{\cellcolor{lightgray}3.5} &  \multicolumn{1}{c|}{\cellcolor{lightgray}2.25}   & \cellcolor{lightgray}2.5  & \cellcolor{lightgray}2.75  & \cellcolor{lightgray}3.0  & \cellcolor{lightgray}3.25 &   \cellcolor{lightgray}3.5 \\ \hline\hline
		\multicolumn{1}{|c||}{\cellcolor{lightgray}$1$}             & 0.899 & 0.918 & 0.941 & 0.970 & 0.987 & \multicolumn{1}{c||}{1.008} & \multicolumn{1}{c|}{1.019} & 1.105 & 1.164 & 1.209 & 1.236 & 1.260 \\ \hline
		\multicolumn{1}{|c||}{\cellcolor{lightgray}$2$}             & 0.986 & 0.979 & 0.974 & 0.971 & 0.969 & \multicolumn{1}{c||}{0.967} & \multicolumn{1}{c|}{1.137} & 1.237 & 1.309 & 1.363 & 1.404 & 1.436 \\ \hline
		\multicolumn{1}{|c||}{\cellcolor{lightgray}$3$}             & 1.004 & 1.001 & 1.000 & 0.999 & 1.000 & \multicolumn{1}{c||}{1.001} & \multicolumn{1}{c|}{1.163} & 1.280 & 1.366 & 1.432 & 1.482 & 1.522 \\ \hline
		%\multicolumn{1}{|c||}{\cellcolor{lightgray}$4$}             & 1.008 & 1.007 & 1.006 & 1.004 & 1.003 & \multicolumn{1}{c||}{1.002} & \multicolumn{1}{c|}{1.172} & 1.296 & 1.391 & 1.464 & 1.521 & 1.566 \\ \hline
		%\multicolumn{1}{|c||}{\cellcolor{lightgray}$5$}             & 1.009 & 1.009 & 1.009 & 1.008 & 1.008 & \multicolumn{1}{c||}{1.007} & \multicolumn{1}{c|}{1.175} & 1.304 & 1.405 & 1.484 & 1.546 & 1.595 \\ \hline\hline
		\multicolumn{1}{|c||}{\cellcolor{lightgray} theory}         & 0.900 & 0.833 & 0.786 & 0.750 & 0.722 & \multicolumn{1}{c||}{0.700} & \multicolumn{1}{c|}{1.000} & 1.000 & 1.000 & 1.000 & 1.000 & 1.000 \\ \hline
	\end{tabular}
	\caption{Experimental order of convergence (LDG): $\texttt{EOC}_i(e_{q,i}^{\textup{norm}})$,~${i=1,2,3}$.}\vspace*{1mm}
	\label{tab4}
	\setlength\tabcolsep{3pt}
	\centering
	\begin{tabular}{c |c|c|c|c|c|c|c|c|c|c|c|c|} \cmidrule(){1-13}
		\multicolumn{1}{|c||}{\cellcolor{lightgray}$\gamma$}	
		& \multicolumn{6}{c||}{\cellcolor{lightgray}Case \hyperlink{case_1}{1}}   & \multicolumn{6}{c|}{\cellcolor{lightgray}Case \hyperlink{case_2}{2}}\\ 
		\hline 
		
		\multicolumn{1}{|c||}{\cellcolor{lightgray}\diagbox[height=1.1\line,width=0.11\dimexpr\linewidth]{\vspace{-0.6mm}$i$}{\\[-5mm] $p$}}
		& \cellcolor{lightgray}2.25 & \cellcolor{lightgray}2.5  & \cellcolor{lightgray}2.75  &  \cellcolor{lightgray}3.0 & \cellcolor{lightgray}3.25  & \multicolumn{1}{c||}{\cellcolor{lightgray}3.5} &  \multicolumn{1}{c|}{\cellcolor{lightgray}2.25}   & \cellcolor{lightgray}2.5  & \cellcolor{lightgray}2.75  & \cellcolor{lightgray}3.0  & \cellcolor{lightgray}3.25 &   \cellcolor{lightgray}3.5 \\ \hline\hline
		\multicolumn{1}{|c||}{\cellcolor{lightgray}$1$}             & 0.809 & 0.765 & 0.740 & 0.728 & 0.713 & \multicolumn{1}{c||}{0.706} & \multicolumn{1}{c|}{0.917} & 0.921 & 0.915 & 0.907 & 0.893 & 0.882 \\ \hline
		\multicolumn{1}{|c||}{\cellcolor{lightgray}$2$}             & 0.888 & 0.816 & 0.765 & 0.728 & 0.700 & \multicolumn{1}{c||}{0.677} & \multicolumn{1}{c|}{1.024} & 1.031 & 1.029 & 1.022 & 1.014 & 1.006 \\ \hline
		\multicolumn{1}{|c||}{\cellcolor{lightgray}$3$}             & 0.903 & 0.834 & 0.785 & 0.749 & 0.722 & \multicolumn{1}{c||}{0.701} & \multicolumn{1}{c|}{1.047} & 1.066 & 1.074 & 1.074 & 1.071 & 1.066 \\ \hline
		%\multicolumn{1}{|c||}{\cellcolor{lightgray}$4$}             & 0.907 & 0.839 & 0.790 & 0.753 & 0.725 & \multicolumn{1}{c||}{0.702} & \multicolumn{1}{c|}{1.055} & 1.080 & 1.093 & 1.098 & 1.099 & 1.097 \\ \hline
		%\multicolumn{1}{|c||}{\cellcolor{lightgray}$5$}             & 0.908 & 0.841 & 0.792 & 0.756 & 0.728 & \multicolumn{1}{c||}{0.705} & \multicolumn{1}{c|}{1.057} & 1.086 & 1.104 & 1.113 & 1.116 & 1.117 \\ \hline\hline
		\multicolumn{1}{|c||}{\cellcolor{lightgray} theory}         & 0.900 & 0.833 & 0.786 & 0.750 & 0.722 & \multicolumn{1}{c||}{0.700} & \multicolumn{1}{c|}{1.000} & 1.000 & 1.000 & 1.000 & 1.000 & 1.000 \\ \hline
	\end{tabular}
	\caption{Experimental order of convergence (LDG): $\texttt{EOC}_i(e_{q,i}^{\textup{modular}})$,~${i=1,2,3}$.}
	\label{tab5}
\end{table}

\begin{figure}[H]
	\includegraphics[width=14.5cm]{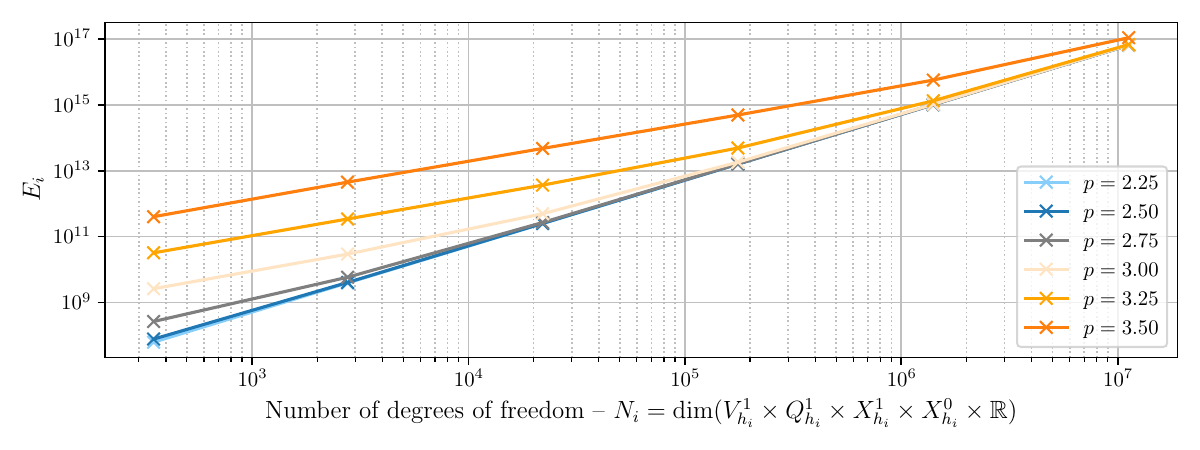}\vspace*{-2mm}
	\caption{Plots of $E_i\coloneqq (\int_{B_{r^{N_i}}^3(\mathbf{m}^{N_i})}{\mu_{\bfD\bfv} \,\mathrm{d}\mu_{h_i}})(\int_{B_{r^{N_i}}^3(\mathbf{m}^{N_i})}{\mu_{\bfD\bfv} ^{-1}\,\mathrm{d}\mu_{h_i}})$, $i=1,\ldots,6$, for $p\in \{2.25,2.5,2.75,3.0,3.25,2.5\}$, where $\mathrm{d}\mu_{h_i}$, $i=1,\ldots,6$, denote the discrete measures representing the Keast rule \texttt{(KEAST7)} (\textit{cf.}\ \cite{keast}), indicating  that $E_i\to \infty$ $(i\to \infty)$~and,~thus, that the violation of the Muckenhoupt condition \eqref{violation} is sufficiently resolved by the Keast rule \texttt{(KEAST7)} (\textit{cf.}\ \cite{keast}).}\label{fig:3}
\end{figure}
 
\appendix
\section{Appendix}

\hspace*{5mm}In this appendix, we give a proof of a local stability and approximation result for a locally $L^1$-stable projection operator in terms of Muckenhoupt weights.

\begin{lemma}\label{lem:PiYstab_muckenhoupt}
	Let  $p\in (1,\infty)$ and 
	 $\sigma \in A_{p'}(\mathbb{R}^d)$. 
	 	If 	$\Pi_h^Q\colon Q \to \Qhk$ is a linear projection operator, that is $\Pi_h^Qz_h=z_h$ for all $z_h\in \Qhk$, which is \textup{locally $L^1$-stable} (\textit{i.e.}, \eqref{eq:PiYstab}), then there exists a constant $c>0$, depending on $k$, $\omega_0$, $p$, $\Omega$, and $[\sigma]_{A_{p'}(\mathbb{R}^d)}$, such that for every $z\in Q$ and $K\in \mathcal{T}_h$, it holds that
	\begin{align}
				\|\Pi_h^Q z\|_{p',\sigma,K}&\leq c\, \|z\|_{p',\sigma,\omega_K}\,,\label{lem:PiYstab_muckenhoupt.1}\\
			\|z-\Pi_h^Q z\|_{p',\sigma,K}&\leq c\, h_K\,\|\nabla z\|_{p',\sigma,\omega_K}\,.\label{lem:PiYstab_muckenhoupt.2}
	\end{align}
\end{lemma}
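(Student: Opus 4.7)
The plan is to derive both estimates from a single pointwise maximal-function bound for $\Pi_h^Q z$ on $K$, and then to invoke the Muckenhoupt characterization of the Hardy--Littlewood maximal operator $M_d$ on $L^{p'}(\mathbb{R}^d;\sigma)$ (\textit{cf.}\ \cite{singular_integrals}), which is bounded precisely when $\sigma\in A_{p'}(\mathbb{R}^d)$ with a constant depending only on $d$, $p'$, and $[\sigma]_{A_{p'}(\mathbb{R}^d)}$.

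\textbf{Step 1 (pointwise maximal bound).} Since $\Pi_h^Q z|_K\in \mathbb{P}_k(K)$, equivalence of norms on the finite-dimensional space $\mathbb{P}_k(K)$ together with shape regularity yields $\|\Pi_h^Q z\|_{\infty,K}\leq c\,\langle\vert \Pi_h^Q z\vert\rangle_K$ with $c>0$ depending only on $k$ and $\omega_0$. Combining this with the local $L^1$-stability assumption \eqref{eq:PiYstab} and enclosing $\omega_K$ in a ball $B\supseteq \omega_K$ with $\vert B\vert\leq c\,\vert \omega_K\vert$ (possible by shape regularity), for every $x\in K\subseteq B$, I obtain
\begin{align*}
\vert\Pi_h^Q z(x)\vert \leq c\,\langle\vert z\vert\rangle_{\omega_K}\leq \tfrac{c}{\vert B\vert}\int_{B}\vert z\vert\chi_{\omega_K}\,\mathrm{d}y\leq c\,M_d(z\chi_{\omega_K})(x)\,.
\end{align*}

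\textbf{Step 2 (stability \eqref{lem:PiYstab_muckenhoupt.1}).} Raising the pointwise bound to the $p'$-th power, multiplying by $\sigma$, integrating over $K$, and invoking the Muckenhoupt theorem yields
\begin{align*}
\|\Pi_h^Q z\|_{p',\sigma,K}^{p'}\leq c\,\|M_d(z\chi_{\omega_K})\|_{p',\sigma,\mathbb{R}^d}^{p'}\leq c\,\|z\chi_{\omega_K}\|_{p',\sigma,\mathbb{R}^d}^{p'}=c\,\|z\|_{p',\sigma,\omega_K}^{p'}\,,
\end{align*}
which is exactly \eqref{lem:PiYstab_muckenhoupt.1}.

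\textbf{Step 3 (approximation \eqref{lem:PiYstab_muckenhoupt.2}).} Since constants lie in $\Qhk$ and $\Pi_h^Q$ is a projection, the identity $z-\Pi_h^Q z = (z-c_K)-\Pi_h^Q(z-c_K)$ holds for every constant $c_K\in \mathbb{R}$. Choosing $c_K\coloneqq \langle z\rangle_{\omega_K}$, the triangle inequality together with \eqref{lem:PiYstab_muckenhoupt.1} applied to $z-c_K$ gives
\begin{align*}
\|z-\Pi_h^Q z\|_{p',\sigma,K}\leq \|z-\langle z\rangle_{\omega_K}\|_{p',\sigma,K}+c\,\|z-\langle z\rangle_{\omega_K}\|_{p',\sigma,\omega_K}\leq c\,\|z-\langle z\rangle_{\omega_K}\|_{p',\sigma,\omega_K}\,.
\end{align*}
The claim then follows from a weighted Poincar\'e inequality on the patch $\omega_K$ for $A_{p'}$-weights, i.e., $\|z-\langle z\rangle_{\omega_K}\|_{p',\sigma,\omega_K}\leq c\,h_K\,\|\nabla z\|_{p',\sigma,\omega_K}$, obtained by standard scaling from the weighted Poincar\'e inequality on a ball (\textit{cf.}\ \cite{singular_integrals}), using that $\omega_K$ is a uniform John domain by shape regularity. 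The resulting constant depends only on $d$, $p'$, $\omega_0$, and $[\sigma]_{A_{p'}(\mathbb{R}^d)}$.

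The only nontrivial ingredient is the weighted Poincar\'e inequality on the patch $\omega_K$ with a constant depending explicitly on $[\sigma]_{A_{p'}(\mathbb{R}^d)}$ and uniform in $h$; this is technically standard but is the key analytic input that replaces the unweighted Poincar\'e inequality used in the classical version of this approximation result.
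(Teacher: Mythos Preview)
Your argument is correct, and both steps land on the same intermediate quantity $\|z-\langle z\rangle_{\omega_K}\|_{p',\sigma,\omega_K}$ as the paper does. The routes differ, though, and it is worth seeing how.

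For the stability estimate \eqref{lem:PiYstab_muckenhoupt.1}, you pass through the pointwise bound $\vert \Pi_h^Q z(x)\vert\leq c\,M_d(z\chi_{\omega_K})(x)$ and then invoke the Muckenhoupt theorem on $L^{p'}(\mathbb{R}^d;\sigma)$. The paper instead argues directly with H\"older's inequality and the definition of the $A_{p'}$ constant: writing $\|z\|_{1,\omega_K}=\|z\sigma^{1/p'}\sigma^{-1/p'}\|_{1,\omega_K}$, splitting by H\"older, and then recognising that the resulting product $\langle\sigma\rangle_{B_K}^{1/p'}\langle\sigma^{1-p}\rangle_{B_K}^{1/p}$ on a ball $B_K\supseteq\omega_K$ is exactly bounded by $[\sigma]_{A_{p'}(\mathbb{R}^d)}^{1/p'}$. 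Your route is slightly slicker; the paper's route makes the dependence on $[\sigma]_{A_{p'}}$ completely explicit without invoking the maximal theorem.

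For the approximation estimate \eqref{lem:PiYstab_muckenhoupt.2}, you cite a weighted Poincar\'e inequality on $\omega_K$ as a black box. The paper instead \emph{supplies} that inequality in-line: it uses the Riesz-potential representation $\vert z(x)-\langle z\rangle_{\omega_K}\vert\leq c\int_{\omega_K}\vert\nabla z(y)\vert\,\vert x-y\vert^{1-d}\,\mathrm{d}y\leq c\,h_K\,M_d(\nabla z\,\chi_{\omega_K})(x)$ (from \cite{lpx-book}) and then the Muckenhoupt boundedness of $M_d$ on $L^{p'}(\mathbb{R}^d;\sigma)$. This is, of course, precisely how the weighted Poincar\'e inequality for $A_{p'}$ weights is usually proved, so the two arguments are equivalent in content; the paper's version is simply more self-contained. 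One minor point: your reference \cite{singular_integrals} (Stein, 1970) predates the Muckenhoupt classes and does not contain the weighted Poincar\'e inequality you invoke; a reference such as Fabes--Kenig--Serapioni or \cite{lpx-book} would be appropriate here.
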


\begin{proof}
	\textit{ad \eqref{lem:PiYstab_muckenhoupt.1}.}
	Using H\"older's inequality, a local  norm equivalence  (\textit{cf.}\ \cite[Lem.\ 12.1]{EG21}), the local $L^1$-stability of $\Pi_h^Q$ (\textit{cf.}\ \eqref{eq:PiYstab}), $\sigma\in
	A_{p'}(\mathbb{R}^d)$, and   $\vert
	B_K\vert\sim \vert K\vert\sim\vert \omega_K\vert$, where $B_K\coloneqq B_{\textup{diam}(\omega_K)}^d(x_K)$ and $x_K$ is the barycenter of $K$, we find that
	\begin{align*}
			\|\Pi_h^Q z\|_{p',\sigma,K}&\leq c\, \|\Pi_h^Qz\|_{\infty,K}\smash{\|\sigma\|_{1,K}^{1/\smash{p'}}}\\&\leq 
			c\,\vert K\vert^{-1}\smash{\| \Pi_h^Q z\|_{1,K} \|\sigma\|_{1,K}^{1/\smash{p'}}}
			\\&\leq c\,	\vert K\vert^{-1}\smash{\| z\|_{1,\omega_K} \|\sigma\|_{1,K}^{1/\smash{p'}}}
			\\&\leq c\,	\vert K\vert^{-1}\smash{\|z\sigma^{1/\smash{p'}}\sigma^{-1/p'}\|_{1,\omega_K} \|\sigma\|_{1,K}^{1/\smash{p'}}}
			\\&\leq c\, \|z\|_{p',\sigma,\omega_K}\smash{\|\vert \omega_K\vert^{-1}\sigma^{1-p}\|_{1,\omega_K}^{1/p} \|\vert K\vert^{-1}\sigma\|_{1,K}^{1/\smash{p'}}}
			\\&\leq c\,	\|z\|_{p',\sigma,\omega_K}\smash{\|\vert B_K\vert^{-1}\sigma^{1-p}\|_{1,B_K}^{1/p} \|\vert B_K\vert^{-1}\sigma\|_{1,B_K}^{1/\smash{p'}}}
			\\&\leq c\, \|z\|_{p',\sigma,\omega_K}\,,
	\end{align*}
	which is the claimed local stability estimate \eqref{lem:PiYstab_muckenhoupt.1}.
	
	\textit{ad \eqref{lem:PiYstab_muckenhoupt.2}.}
     From \cite[Lem.~8.2.1, Rem.~8.2.10,
        Lem.~6.1.4]{lpx-book}, for a.e.\ $x \in \omega_K$, it follows that
	\begin{align*}
		\vert z(x)-\langle z\rangle_{\omega_K}\vert&\leq c\,
          \int_{\mathbb R^d} \frac{\abs{\nabla
          z(y)}\chi_{\omega_K}(y)}{\abs{x-y}^{d-1}}\, \mathrm{d}x \\&\leq c\,
          \int_{B_{\textup{diam}(\omega_K)}^d(x)} \frac{\abs{\nabla
          z(y)}\chi_{\omega_K}(y)}{\abs{x-y}^{d-1}}\, \mathrm{d}x \\&\le c\, h_K\,M_d(\nabla z\chi_{\omega_K})(x)\,,
	\end{align*}
        where we also used that $h_K\sim \textup{diam}(\omega_K)$. 
	Using this estimate, that $\Pi_h^Q\langle z\rangle_{\omega_K}=\langle z\rangle_{\omega_K}$, the local stability estimate \eqref{lem:PiYstab_muckenhoupt.1}, and that $M_d$ is stable from $L^{p'}(\mathbb{R}^d;\sigma)$ to $L^{p'}(\mathbb{R}^d;\sigma)$, we find that
	\begin{align*}\label{lem:shifted_modular_Pi_div.1}
          \begin{aligned}
            \|z-\Pi_h^Q z\|_{p',\sigma,K}&\leq \smash{\|z-\langle z\rangle_{\omega_K}\|_{p',\sigma,K}+ \|\Pi_h^Q(z-\langle z\rangle_{\omega_K})\|_{p',\sigma,K}}
            \\
            &\leq \smash{c\,\|z-\langle z\rangle_{\omega_K}\|_{p',\sigma,\omega_K}}
			% \\&\leq \smash{c\,h_K\,(\|M_d(\nabla z\chi_{\omega_K})\|_{p',\sigma,\omega_K}+c\,\|\langle M_d(\nabla z\chi_{\omega_K})\rangle_{\omega_K}\|_{p',\sigma,\omega_K})}
			% \\&\leq \smash{c\,h_K\,(\|M_d(\nabla
                        %   z\chi_{\omega_K})\|_{p',\sigma,\mathbb{R}^d}+c\,\|
                        %   \langle M_d(\nabla z\chi_{\omega_K})\rangle_{\smash{B_K}}\|_{p',\sigma,\mathbb{R}^d})}
			% \\&\leq \smash{c\,h_K\,(\|M_d(\nabla z\chi_{\omega_K})\|_{p',\sigma,\mathbb{R}^d}+c\,\|M_d(M_d(\nabla z\chi_{\omega_K}))\|_{p',\sigma,\mathbb{R}^d})}
            \\
            &\leq \smash{c\,h_K\,\|M_d(\nabla z\chi_{\omega_K})\|_{p',\sigma,\omega_K}}
              \\
              &\leq \smash{c\,h_K\,\|M_d(\nabla z\chi_{\omega_K})\|_{p',\sigma,\mathbb{R}^d}}
              \\
              &\leq \smash{c\,h_K\,\|\nabla z\|_{p',\sigma,\omega_K}\,,}
		\end{aligned}
	\end{align*}
	which is the claimed local approximation estimate \eqref{lem:PiYstab_muckenhoupt.2}.
\end{proof}
%
%\bibliographystyle{abbrvnat}
%\bibliography{rose,fluid} 

\begin{thebibliography}{34}
	\providecommand{\natexlab}[1]{#1}
	\providecommand{\url}[1]{\texttt{#1}}
	\expandafter\ifx\csname urlstyle\endcsname\relax
	\providecommand{\doi}[1]{doi: #1}\else
	\providecommand{\doi}{doi: \begingroup \urlstyle{rm}\Url}\fi
	
	\bibitem[Amestoy et~al.(2001)Amestoy, Duff, L'Excellent, and Koster]{mumps}
	P.~R. Amestoy, I.~S. Duff, J.-Y. L'Excellent, and J.~Koster.
	\newblock A fully asynchronous multifrontal solver using distributed dynamic
	scheduling.
	\newblock \emph{SIAM J. Matrix Anal. Appl.}, 23\penalty0 (1):\penalty0 15--41,
	2001.
	\newblock ISSN 0895-4798,1095-7162.
	\newblock \doi{10.1137/S0895479899358194}.
	\newblock URL \url{https://doi.org/10.1137/S0895479899358194}.
	
	\bibitem[Bartels(2016)]{Ba16}
	S.~Bartels.
	\newblock \emph{Numerical approximation of partial differential equations},
	volume~64 of \emph{Texts in Applied Mathematics}.
	\newblock Springer, [Cham], 2016.
	\newblock ISBN 978-3-319-32353-4; 978-3-319-32354-1.
	\newblock \doi{10.1007/978-3-319-32354-1}.
	\newblock URL \url{https://doi.org/10.1007/978-3-319-32354-1}.
	
	\bibitem[Beir{\~a}o~da Veiga et~al.(2011)Beir{\~a}o~da Veiga, Kaplick\'y, and
	R{\r u}{\v z}i{\v c}ka]{hugo-petr-rose}
	H.~Beir{\~a}o~da Veiga, P.~Kaplick\'y, and M.~R{\r u}{\v z}i{\v c}ka.
	\newblock Boundary regularity of shear--thickening flows.
	\newblock \emph{J. Math. Fluid Mech.}, 13:\penalty0 387--404, 2011.
	
	\bibitem[Belenki et~al.(2012)Belenki, Berselli, Diening, and
	R\r{u}\v{z}i\v{c}ka]{bdr-phi-stokes}
	L.~Belenki, L.~C. Berselli, L.~Diening, and M.~R\r{u}\v{z}i\v{c}ka.
	\newblock On the finite element approximation of {$p$}-{S}tokes systems.
	\newblock \emph{SIAM J. Numer. Anal.}, 50\penalty0 (2):\penalty0 373--397,
	2012.
	\newblock ISSN 0036-1429,1095-7170.
	\newblock \doi{10.1137/10080436X}.
	\newblock URL \url{https://doi.org/10.1137/10080436X}.
	
	\bibitem[Berselli et~al.(2010)Berselli, Diening, and R{\r u}{\v z}i{\v
		c}ka]{bdr-7-5}
	L.~C. Berselli, L.~Diening, and M.~R{\r u}{\v z}i{\v c}ka.
	\newblock Existence of strong solutions for incompressible fluids with shear
	dependent viscosities.
	\newblock \emph{J. Math. Fluid Mech.}, 12\penalty0 (1):\penalty0 101--132,
	2010.
	
	\bibitem[Brenner and Scott(2008)]{BS08}
	S.~C. Brenner and L.~R. Scott.
	\newblock \emph{The mathematical theory of finite element methods}, volume~15
	of \emph{Texts in Applied Mathematics}.
	\newblock Springer, New York, third edition, 2008.
	\newblock ISBN 978-0-387-75933-3.
	\newblock \doi{10.1007/978-0-387-75934-0}.
	\newblock URL \url{https://doi.org/10.1007/978-0-387-75934-0}.
	
	\bibitem[Brezzi and Fortin(1991)]{BF1991}
	F.~Brezzi and M.~Fortin.
	\newblock \emph{Mixed and hybrid finite element methods}, volume~15 of
	\emph{Springer Series in Computational Mathematics}.
	\newblock Springer-Verlag, New York, 1991.
	\newblock ISBN 0-387-97582-9.
	\newblock \doi{10.1007/978-1-4612-3172-1}.
	\newblock URL \url{https://doi.org/10.1007/978-1-4612-3172-1}.
	
	\bibitem[Di~Pietro and Ern(2012)]{DiPE12}
	D.~A. Di~Pietro and A.~Ern.
	\newblock \emph{Mathematical aspects of discontinuous {G}alerkin methods},
	volume~69 of \emph{Math\'{e}matiques \& Applications}.
	\newblock Springer, Heidelberg, 2012.
	\newblock ISBN 978-3-642-22979-4.
	\newblock \doi{10.1007/978-3-642-22980-0}.
	\newblock URL \url{https://doi.org/10.1007/978-3-642-22980-0}.
	
	\bibitem[Diening and Ettwein(2008)]{die-ett}
	L.~Diening and F.~Ettwein.
	\newblock Fractional estimates for non-differentiable elliptic systems with
	general growth.
	\newblock \emph{Forum Math.}, 20\penalty0 (3):\penalty0 523--556, 2008.
	\newblock ISSN 0933-7741,1435-5337.
	\newblock \doi{10.1515/FORUM.2008.027}.
	\newblock URL \url{https://doi.org/10.1515/FORUM.2008.027}.
	
	\bibitem[Diening and Kreuzer(2008)]{DK08}
	L.~Diening and C.~Kreuzer.
	\newblock Linear convergence of an adaptive finite element method for the
	{$p$}-{L}aplacian equation.
	\newblock \emph{SIAM J. Numer. Anal.}, 46\penalty0 (2):\penalty0 614--638,
	2008.
	\newblock ISSN 0036-1429.
	\newblock \doi{10.1137/070681508}.
	\newblock URL \url{https://doi.org/10.1137/070681508}.
	
	\bibitem[Diening et~al.(2010)Diening, R{\r u}{\v z}i{\v c}ka, and
	Schumacher]{john}
	L.~Diening, M.~R{\r u}{\v z}i{\v c}ka, and K.~Schumacher.
	\newblock A decomposition technique for {J}ohn domains.
	\newblock \emph{Ann. Acad. Sci. Fenn. Math.}, 35\penalty0 (1):\penalty0
	87--114, 2010.
	\newblock ISSN 1239-629X.
	\newblock \doi{10.5186/aasfm.2010.3506}.
	
	\bibitem[Diening et~al.(2011)Diening, Harjulehto, H\"ast\"o, and R{\r u}{\v
		z}i{\v c}ka]{lpx-book}
	L.~Diening, P.~Harjulehto, P.~H\"ast\"o, and M.~R{\r u}{\v z}i{\v c}ka.
	\newblock \emph{{L}ebesgue and {S}obolev spaces with variable exponents}.
	\newblock Berlin: Springer, 2011.
	
	\bibitem[Diening et~al.(2014)Diening, Kr\"oner, R{\r u}{\v z}i{\v c}ka, and
	Toulopoulos]{dkrt-ldg}
	L.~Diening, D.~Kr\"oner, M.~R{\r u}{\v z}i{\v c}ka, and I.~Toulopoulos.
	\newblock A {L}ocal {D}iscontinuous {G}alerkin approximation for systems with
	$p$-structure.
	\newblock \emph{IMA J. Num. Anal.}, 34\penalty0 (4):\penalty0 1447--1488, 2014.
	\newblock \doi{doi: 10.1093/imanum/drt040}.
	
	\bibitem[Ekeland and T\'{e}mam(1999)]{ET99}
	I.~Ekeland and R.~T\'{e}mam.
	\newblock \emph{Convex analysis and variational problems}, volume~28 of
	\emph{Classics in Applied Mathematics}.
	\newblock Society for Industrial and Applied Mathematics (SIAM), Philadelphia,
	PA, english edition, 1999.
	\newblock ISBN 0-89871-450-8.
	\newblock \doi{10.1137/1.9781611971088}.
	\newblock URL \url{https://doi.org/10.1137/1.9781611971088}.
	\newblock Translated from the French.
	
	\bibitem[Ern and Guermond(2021)]{EG21}
	A.~Ern and J.~L. Guermond.
	\newblock \emph{Finite Elements I: Approximation and Interpolation}.
	\newblock Number~1 in Texts in Applied Mathematics. Springer International
	Publishing, 2021.
	\newblock ISBN 9783030563417.
	\newblock \doi{10.1007/978-3-030-56341-7}.
	
	\bibitem[Giaquinta and Modica(1986)]{gia-mod-86}
	M.~Giaquinta and G.~Modica.
	\newblock Remarks on the regularity of the minimizers of certain degenerate
	functionals.
	\newblock \emph{Manuscripta Math.}, 57\penalty0 (1):\penalty0 55--99, 1986.
	\newblock ISSN 0025-2611,1432-1785.
	\newblock \doi{10.1007/BF01172492}.
	\newblock URL \url{https://doi.org/10.1007/BF01172492}.
	
	\bibitem[Giusti(2003)]{giu1}
	E.~Giusti.
	\newblock \emph{Direct methods in the calculus of variations}.
	\newblock World Scientific Publishing Co., Inc., River Edge, NJ, 2003.
	\newblock ISBN 981-238-043-4.
	\newblock \doi{10.1142/9789812795557}.
	\newblock URL \url{https://doi.org/10.1142/9789812795557}.
	
	\bibitem[Hunter(2007)]{Hun07}
	J.~D. Hunter.
	\newblock Matplotlib: A 2d graphics environment.
	\newblock \emph{Computing in Science \& Engineering}, 9\penalty0 (3):\penalty0
	90--95, 2007.
	\newblock \doi{10.1109/MCSE.2007.55}.
	
	\bibitem[Kaltenbach and R\r{u}\v{z}i\v{c}ka(2023{\natexlab{a}})]{kr-phi-ldg}
	A.~Kaltenbach and M.~R\r{u}\v{z}i\v{c}ka.
	\newblock Convergence analysis of a local discontinuous {G}alerkin
	approximation for nonlinear systems with balanced {O}rlicz-structure.
	\newblock \emph{ESAIM Math. Model. Numer. Anal.}, 57\penalty0 (3):\penalty0
	1381--1411, 2023{\natexlab{a}}.
	\newblock ISSN 2822-7840,2804-7214.
	\newblock \doi{10.1051/m2an/2023028}.
	\newblock URL \url{https://doi.org/10.1051/m2an/2023028}.
	
	\bibitem[Kaltenbach and R\r{u}\v{z}i\v{c}ka(2023{\natexlab{b}})]{kr-pnse-ldg-1}
	A.~Kaltenbach and M.~R\r{u}\v{z}i\v{c}ka.
	\newblock A local discontinuous {G}alerkin approximation for the
	{$p$}-{N}avier-{S}tokes system, {P}art {I}: {C}onvergence analysis.
	\newblock \emph{SIAM J. Numer. Anal.}, 61\penalty0 (4):\penalty0 1613--1640,
	2023{\natexlab{b}}.
	\newblock ISSN 0036-1429,1095-7170.
	\newblock \doi{10.1137/22M151474X}.
	\newblock URL \url{https://doi.org/10.1137/22M151474X}.
	
	\bibitem[Kaltenbach and R\r{u}\v{z}i\v{c}ka(2023{\natexlab{c}})]{kr-pnse-ldg-2}
	A.~Kaltenbach and M.~R\r{u}\v{z}i\v{c}ka.
	\newblock A local discontinuous {G}alerkin approximation for the
	{$p$}-{N}avier-{S}tokes system, {P}art {II}: {C}onvergence rates for the
	velocity.
	\newblock \emph{SIAM J. Numer. Anal.}, 61\penalty0 (4):\penalty0 1641--1663,
	2023{\natexlab{c}}.
	\newblock ISSN 0036-1429,1095-7170.
	\newblock \doi{10.1137/22M1514751}.
	\newblock URL \url{https://doi.org/10.1137/22M1514751}.
	
	\bibitem[Kaltenbach and R\r{u}\v{z}i\v{c}ka(2023{\natexlab{d}})]{kr-pnse-ldg-3}
	A.~Kaltenbach and M.~R\r{u}\v{z}i\v{c}ka.
	\newblock A local discontinuous {G}alerkin approximation for the
	{$p$}-{N}avier-stokes system, part {III}: convergence rates for the pressure.
	\newblock \emph{SIAM J. Numer. Anal.}, 61\penalty0 (4):\penalty0 1763--1782,
	2023{\natexlab{d}}.
	\newblock ISSN 0036-1429,1095-7170.
	\newblock \doi{10.1137/22M1541472}.
	\newblock URL \url{https://doi.org/10.1137/22M1541472}.
	
	\bibitem[Kaplick\'y et~al.(1999)Kaplick\'y, M\'alek, and Star\'a]{KMS2}
	P.~Kaplick\'y, J.~M\'alek, and J.~Star\'a.
	\newblock {$C^{1,\alpha }$}-regularity of weak solutions to a class of
	nonlinear fluids in two dimensions - stationary {D}irichlet problem.
	\newblock \emph{Zap. Nauchn. Sem. Pt. Odel. Mat. Inst.}, 259:\penalty0 89--121,
	1999.
	
	\bibitem[Keast(1986)]{keast}
	P.~Keast.
	\newblock Moderate-degree tetrahedral quadrature formulas.
	\newblock \emph{Comput. Methods Appl. Mech. Engrg.}, 55\penalty0 (3):\penalty0
	339--348, 1986.
	\newblock ISSN 0045-7825,1879-2138.
	\newblock \doi{10.1016/0045-7825(86)90059-9}.
	\newblock URL \url{https://doi.org/10.1016/0045-7825(86)90059-9}.
	
	\bibitem[Kuhn(1960)]{kuhn}
	H.~W. Kuhn.
	\newblock Some combinatorial lemmas in topology.
	\newblock \emph{IBM J. Res. Develop.}, 4:\penalty0 508--524, 1960.
	\newblock ISSN 0018-8646.
	\newblock \doi{10.1147/rd.45.0518}.
	\newblock URL \url{https://doi.org/10.1147/rd.45.0518}.
	
	\bibitem[K\v{r}epela and R\r{u}\v{z}i\v{c}ka(2018)]{kr-nekorn}
	M.~K\v{r}epela and M.~R\r{u}\v{z}i\v{c}ka.
	\newblock A counterexample related to the regularity of the {$p$}-{S}tokes
	problem.
	\newblock \emph{J. Math. Sci. (N.Y.)}, 232\penalty0 (3, Problems in
	mathematical analysis. No. 92 (Russian)):\penalty0 390--401, 2018.
	\newblock \doi{10.1007/s10958-018-3879-9}.
	\newblock URL \url{https://doi.org/10.1007/s10958-018-3879-9}.
	
	\bibitem[K\v{r}epela and R\r{u}\v{z}i\v{c}ka(2020)]{kr-nekorn-add}
	M.~K\v{r}epela and M.~R\r{u}\v{z}i\v{c}ka.
	\newblock A counterexample related to the regularity of the {$p$}-{S}tokes
	problem.
	\newblock \emph{J. Math. Sci. (N.Y.)}, 247\penalty0 (6):\penalty0 957--959,
	2020.
	\newblock \doi{10.1007/s10958-018-3879-9}.
	\newblock URL \url{https://doi.org/10.1007/s10958-018-3879-9}.
	
	\bibitem[Lions(1969)]{lions-quel}
	J.-L. Lions.
	\newblock \emph{Quelques m\'{e}thodes de r\'{e}solution des probl\`emes aux
		limites non lin\'{e}aires}.
	\newblock Dunod, Paris; Gauthier-Villars, Paris, 1969.
	
	\bibitem[Logg and Wells(2010)]{LW10}
	A.~Logg and G.~N. Wells.
	\newblock Dolfin: Automated finite element computing.
	\newblock \emph{ACM Transactions on Mathematical Software}, 37\penalty0
	(2):\penalty0 1--28, 2010.
	\newblock \doi{10.1145/1731022.1731030}.
	
	\bibitem[Musielak(1983)]{Mu}
	J.~Musielak.
	\newblock \emph{Orlicz spaces and modular spaces}, volume 1034 of \emph{Lecture
		Notes in Mathematics}.
	\newblock Springer-Verlag, Berlin, 1983.
	\newblock ISBN 3-540-12706-2.
	\newblock \doi{10.1007/BFb0072210}.
	\newblock URL \url{https://doi.org/10.1007/BFb0072210}.
	
	\bibitem[R{\r u}{\v z}i{\v c}ka and Diening(2007)]{dr-nafsa}
	M.~R{\r u}{\v z}i{\v c}ka and L.~Diening.
	\newblock Non--{N}ewtonian fluids and function spaces.
	\newblock In \emph{Nonlinear Analysis, Function Spaces and Applications,
		Proceedings of {NAFSA} 2006 {P}rague}, volume~8, pages {95--144}, 2007.
	  
	
	\bibitem[Stein(1970)]{singular_integrals}
	E.~M. Stein.
	\newblock \emph{Singular integrals and differentiability properties of
		functions}, volume No. 30 of \emph{Princeton Mathematical Series}.
	\newblock Princeton University Press, Princeton, NJ, 1970.
	
	\bibitem[Strang and Fix(2008)]{strang_fix}
	G.~Strang and G.~Fix.
	\newblock \emph{An analysis of the finite element method}.
	\newblock Wellesley-Cambridge Press, Wellesley, MA, second edition, 2008.
	\newblock ISBN 978-0-9802327-0-7; 0-9802327-0-8.
	
\end{thebibliography}

%\end{document}

\def\cprime{$'$} \def\cprime{$'$} \def\cprime{$'$}

\end{document}